%%%%%%%%%%%%%%%%%%%%%%%%%%%%%%%%%%%%%%%%%%%%%%%%%%%%%%%%%%%%%%%%%%%
%%                                                               %%
%% This is the sample.tex file for the ejpecp document class.    %%
%% This file is for ejpecp version 1.0                           %%
%% Please be sure that you are using the lastest version:        %%
%% https://www.ctan.org/pkg/ejpecp                               %%
%%                                                               %%
%% The ejpecp class works *only* with a pdflatex engine.         %%
%% You need the ejpecp.cls in your current directory or in any   %%
%% directory scanned for cls files by your pdflatex engine.      %%
%%                                                               %%
%% Manual inclusion of page layout commands is useless.          %%
%%                                                               %%
%% Note that any complex file will produce delayed publication!  %%
%%                                                               %%
%%%%%%%%%%%%%%%%%%%%%%%%%%%%%%%%%%%%%%%%%%%%%%%%%%%%%%%%%%%%%%%%%%%

%%%%%%%%%%%%%%%%%%%%%%%%%%%%%%%%%%%%%%%%%%%%%%%%%%%%%%%%%%%%%%%%%%%
%%                                                               %%
%% Journal selection: ECP or EJP.                                %%
%%                                                               %%
%%%%%%%%%%%%%%%%%%%%%%%%%%%%%%%%%%%%%%%%%%%%%%%%%%%%%%%%%%%%%%%%%%%

\documentclass[EJP,preprint]{ejpecp} % replace ECP by EJP if needed.
% add preprint option to remove journal information and logos

%%%%%%%%%%%%%%%%%%%%%%%%%%%%%%%%%%%%%%%%%%%%%%%%%%%%%%%%%%%%%%%%%%%
%%                                                               %%
%% Please uncomment and adapt to your encoding if needed:        %%
%%                                                               %%
%%%%%%%%%%%%%%%%%%%%%%%%%%%%%%%%%%%%%%%%%%%%%%%%%%%%%%%%%%%%%%%%%%%

%\usepackage[T1]{fontenc}
%\usepackage[utf8]{inputenc}

%%%%%%%%%%%%%%%%%%%%%%%%%%%%%%%%%%%%%%%%%%%%%%%%%%%%%%%%%%%%%%%%%%%
%%                                                               %%
%% Please add here your own packages (be minimalistic please!):  %%
%% Please avoid using exotic packages and keep things simple.    %%
%% It is not necessary to include ams* and graphicx packages     %%
%% since they are automatically included by the ejpecp class.    %%
%%                                                               %%
%%%%%%%%%%%%%%%%%%%%%%%%%%%%%%%%%%%%%%%%%%%%%%%%%%%%%%%%%%%%%%%%%%%

\usepackage{enumerate}  % uncomment to use this package

%%%%%%%%%%%%%%%%%%%%%%%%%%%%%%%%%%%%%%%%%%%%%%%%%%%%%%%%%%%%%%%%%%%
%%                                                               %%
%% Shorttitle (please edit and customize for running heading):   %%
%% Title (please edit and customize):                            %%
%%                                                               %%
%%%%%%%%%%%%%%%%%%%%%%%%%%%%%%%%%%%%%%%%%%%%%%%%%%%%%%%%%%%%%%%%%%%

\SHORTTITLE{Return probability on Galton--Watson trees}

\TITLE{On the return probability of the simple random walk on Galton--Watson trees\support{This work 
	was partially supported by the Deutsche Forschungsgemeinschaft 
	(DFG, German Research Foundation) -- TRR 352 ``Mathematics of Many-Body Quantum Systems and 
	Their Collective Phenomena" -- Project-ID 470903074.}}

%\DEDICATORY{Dedicated to the memory of ...} % Optional

%%%%%%%%%%%%%%%%%%%%%%%%%%%%%%%%%%%%%%%%%%%%%%%%%%%%%%%%%%%%%%%%%%%
%%                                                               %%
%% Authors (please edit and customize):                          %%
%%                                                               %%
%%%%%%%%%%%%%%%%%%%%%%%%%%%%%%%%%%%%%%%%%%%%%%%%%%%%%%%%%%%%%%%%%%%

\AUTHORS{%
  Peter~M\"uller\footnote{Mathematisches Institut, Ludwig-Maximilians-Universit\"at M\"unchen,
  	Theresienstra\ss{e} 39, 80333 M\"unchen, Germany. \BEMAIL{mueller@lmu.de}}\orcid{0000-0002-3063-9636}
  \and %% remove this line and below if single author
  Jakob~Stern\footnotemark[2]}%AUTHORS
%% Type \and between all consecutive authors (not only before the last author).
%% Note: you may use \BEMAIL to force a line break before e-mail display.
%% Another note: place \orcid right after \footnote.

%% Here is a compact example with two authors with same affiliation
%% \AUTHORS{%
%%  Michael~First\footnote{Some University. \EMAIL{mf,js@uni.edu}
%%  \and
%%  John~Second\footnotemark[2]}%AUTHORS
%% Note: The \footnotemark is the footnote number that you wish to reuse. Here
%% it is [2] (we took into account the footnote generated by \thanks in title).

%%%%%%%%%%%%%%%%%%%%%%%%%%%%%%%%%%%%%%%%%%%%%%%%%%%%%%%%%%%%%%%%%%%
%%                                                               %%
%% Please edit and customize the following items:                %%
%%                                                               %%
%%%%%%%%%%%%%%%%%%%%%%%%%%%%%%%%%%%%%%%%%%%%%%%%%%%%%%%%%%%%%%%%%%%

\KEYWORDS{Galton--Watson tree; simple random walk; return probability; anchored expansion}
% Separate items with ;

\AMSSUBJ{05C81; % Random walks on graphs
	60K37; % Processes in random environments
	60J80} % Branching processes (Galton-Watson, birth-and-death, etc.)
% Edit. Separate items with ;
%\AMSSUBJSECONDARY{FIXME:} % Optional, separate items with ;

\SUBMITTED{February 27, 2024} % Edit.
\ACCEPTED{January 2, 2025} % Edit.

%%%%%%%%%%%%%%%%%%%%%%%%%%%%%%%%%%%%%%%%%%%%%%%%%%%%%%%%%%%%%%%%%%%
%%                                                               %%
%% Please uncomment and edit if you have an arXiv ID:            %%
%%                                                               %%
%%%%%%%%%%%%%%%%%%%%%%%%%%%%%%%%%%%%%%%%%%%%%%%%%%%%%%%%%%%%%%%%%%%

%\ARXIVID{2402.01600} % Edit.
%\HALID{hal-NNN} % Edit.

%%%%%%%%%%%%%%%%%%%%%%%%%%%%%%%%%%%%%%%%%%%%%%%%%%%%%%%%%%%%%%%%%%%
%%                                                               %%
%% The following items will be set by the Managing Editor.       %%
%%                                                               %%
%%%%%%%%%%%%%%%%%%%%%%%%%%%%%%%%%%%%%%%%%%%%%%%%%%%%%%%%%%%%%%%%%%%

\VOLUME{0}
\YEAR{2023}
\PAPERNUM{0}
\DOI{10.1214/YY-TN}

%%%%%%%%%%%%%%%%%%%%%%%%%%%%%%%%%%%%%%%%%%%%%%%%%%%%%%%%%%%%%%%%%%%
%%                                                               %%
%% Please edit and customize the abstract:                       %%
%%                                                               %%
%%%%%%%%%%%%%%%%%%%%%%%%%%%%%%%%%%%%%%%%%%%%%%%%%%%%%%%%%%%%%%%%%%%

\ABSTRACT{We consider the simple random walk on Galton--Watson trees with supercritical offspring 
	distribution, 
	conditioned on non-extinction. In case the offspring distribution has 
	finite support, we prove an upper bound for the annealed return probability to the root which decays 
	subexponentially in time with exponent $1/3$. This exponent is optimal. Our result improves 
	the previously known subexponential upper bound with exponent $1/5$ by Piau 
	[Ann.\ Probab.\ \textbf{26}, 1016--1040 (1998)]. For offspring distributions with unbounded support
	but sufficiently fast decay, our method also yields improved subexponential upper bounds. 
}

%%%%%%%%%%%%%%%%%%%%%%%%%%%%%%%%%%%%%%%%%%%%%%%%%%%%%%%%%%%%%%%%%%%
%%                                                               %%
%% Please add your own macros and environments below:            %%
%%                                                               %%
%% If possible, avoid using \def and use instead \newcommand     %%
%% If possible, avoid defining your own environments, and use    %%
%% instead the environments already defined by ejpecp:           %%
%%  assumption, assumptions, claim, condition, conjecture,       %%
%%  corollary, definition, definitions, example, exercise, fact, %%
%%  facts, heuristics, hypothesis, hypotheses, lemma, notation,  %%
%%  notations, problem, proposition, remark, theorem             %%
%%                                                               %%
%%%%%%%%%%%%%%%%%%%%%%%%%%%%%%%%%%%%%%%%%%%%%%%%%%%%%%%%%%%%%%%%%%%

\renewcommand{\le}{\leqslant}
\renewcommand{\ge}{\geqslant}
\renewcommand{\leq}{\leqslant}
\renewcommand{\geq}{\geqslant}
\let\dotlessi\i  \renewcommand{\i}{\mathrm{i}}
\renewcommand{\d}{\mathrm{d}}  

\let\emptyset\myemptyset

\DeclareMathOperator*{\essinf}{ess\,inf}
\DeclareMathOperator{\dist}{dist}
\providecommand{\LEFTRIGHT}[3]{\left#1 #3 \right#2}

\providecommand{\wtilde}[1]{\widetilde{#1}}
\providecommand{\clap}{\text}
\newcommand{\Gbf}{\mathbf{G}}
\newcommand{\Tbf}{\mathbf{T}}

\newcommand{\Nbb}{\mathbb{N}}
\newcommand{\Rbb}{\mathbb{R}}
\newcommand{\Tbb}{\mathbb{T}}

\renewcommand{\phi}{\varphi}
\renewcommand{\theta}{\vartheta}
\renewcommand{\subsetneq}{\varsubsetneqq}
\newcommand{\vv}[1]{|#1|}
\newcommand{\ev}[1]{|#1|}

%%%%%%%%%%%%%%%%%%%%%%%%%%%%%%%%%%%%%%%%%%%%%%%%%%%%%%%%%%%%%%%%%%%
%%                                                               %%
%% No macro definitions below this line please!                  %%
%%                                                               %%
%%%%%%%%%%%%%%%%%%%%%%%%%%%%%%%%%%%%%%%%%%%%%%%%%%%%%%%%%%%%%%%%%%%

\begin{document}

\section{Introduction and result}
Galton--Watson trees are extensively studied as one of the classic examples of trees \cite{peres2016prob}. They arise naturally in many contexts, for example, as the local weak limit of sparse Erd\H{o}s--R\'enyi random graphs \cite{virag2017spectrans,hofstad2020glimit}, i.e.\ those for which the mean number of edges grows proportionally to the number of vertices. 

To fix notation, let $\Nbb \coloneqq \{1,2,\ldots\}$ denote the set of positive integers and 
$\Nbb_{0} \coloneqq \Nbb \cup \{0\}$.
We write $\{p_j\}_{j\in\Nbb_0} \in [0,1[^{\,\Nbb_{0}}$, with $\sum_{j\in\Nbb_{0}}p_j=1$,
for the (non-degenerate) offspring distribution of a Galton--Watson branching process with a single progenitor. 
The associated family tree is called a Galton--Watson tree. We adhere to its canonical realisation as a probability space $(\Tbb,\mathcal{F}, G^{*})$ of rooted tree graphs $\Tbf\in\Tbb$ where the probability measure $G^{*}$ is canonically determined by the 
offspring distribution $\{p_j\}_{j\in\Nbb_0}$. 
Throughout we assume that the Galton--Watson tree is supercritical, that is, with mean number of offspring
\begin{equation}
	\label{lambda-def}
	\lambda  \coloneqq \sum_{j\in\Nbb_{0}} j p_{j} > 1.
\end{equation}
As $p_{1} \neq 1$, this is equivalent \cite[Prop.\ 5.4]{peres2016prob} 
to the Galton--Watson tree having a chance not to die out, i.e.\
\begin{equation} \label{g-infty-def}
	g_{\infty}\coloneqq G^{*}\big[\{ T\in\Tbb: \vv{\Tbf} = \infty\}\big] >0.
\end{equation}
Here, $|\pmb\cdot|$ indicates the cardinality of a set, and whenever there is no danger of confusion, our notation will not distinguish between a graph and its vertex set. Thus, $\vv{\Tbf}$ denotes the number of vertices of the tree $\Tbf$. 
We write 
\begin{equation}
	G  \coloneqq  G^{*} \big[\;\pmb\cdot \, \big|\; \{T\in\Tbb: \vv{\Tbf} = \infty\}\big]
\end{equation}
for the conditional probability measure conditioned on non-extinction.  
	
Random walks in random environments have been studied in numerous works. %, see e.g. \cite{grigor1994volume,couh2001groups,kuma2008ranwalk,hu2017biasedwalk,chen2018biasedresistance}. 
We refer to \cite[Chap.\ 16]{peres2016prob} and references therein for a nice account of random walks on 
a Galton--Watson tree. Despite extensive research on this topic, there exists no comprehensive, 
sharp result for the annealed return probability to the root of the simple random walk 
on a Galton--Watson tree. This is not only a natural question, but also links to other 
interesting quantities like spectral properties of the random walk's generator, 
the graph Laplacian on a Galton--Watson tree.

Let $P[\,\pmb\cdot\,] \coloneqq P_{o}^{\Tbf}[\,\pmb\cdot\,]$ denote the probability measure of 
the \emph{simple} 
random walk $\{X_t\}_{t\in\Nbb_0}$ on a realisation $\Tbf\in\Tbb$ of a Galton--Watson tree, 
starting at the root $o$. 
In particular, $\{X_t\}_{t\in\Nbb_0}$ jumps only along edges of $\Tbf$, and if $x$ and $y$ are the two vertices of an edge in $\Tbf$ then the transition probability for $\{X_t\}_{t\in\Nbb_0}$ to jump from $x$ to $y$ is given by the inverse of the vertex degree of $x$.
We then write
\begin{equation}
	\label{eq:rt-def}
	R_t \coloneqq  GP[X_{2t}=o]  \coloneqq  \int_{\Tbb}\d G(\Tbf) P_{o}^{\Tbf}[X_{2t}=o]
\end{equation}
for the annealed return probability to the root at time $t\in\Nbb_0$ of the simple random walk and summarise the known results in Table~\ref{tab:piau}.
%
%\noindent
\begin{table}
\begin{center}
\begin{tabular}{@{}l@{\qquad} p{4.4cm} @{\;}r @{\;\,} c @{\;\,} l@{\;\;}}
	(a) & $p_0=p_1=0$: & $\exp(-c't)\leq$ & $ R_t $ & $\leq \exp(-ct)$\\	
	(b) &$p_0>0 \text{~~or~~} p_1>0$: & $\exp(-c't^{\frac13})\leq$ & $R_t$ & \\	
	(c) &$p_0=0$: &  & $R_t$ & $\leq \exp(-ct^{\frac13})$\\	
	(d) &$\{p_j\}_{j\in\Nbb_0}$ finitely supported: &  & $R_t$ & $\leq \exp(-ct^{\frac15})$\\	
	(e) & general $\{p_j\}_{j\in\Nbb_0}$: &  & $R_t$ & $\leq \exp(-ct^{\frac16})$.\\
\end{tabular} 
\end{center}
\caption{Summary of known bounds \cite{piau1998lowbnd} for the annealed return probability \protect{\eqref{eq:rt-def}} to the root. Here, $c,c'>0$ are constants (independent of $t$), which may differ from line to line, and the bounds hold for all $t\in\Nbb_0$.}
\label{tab:piau}
\end{table}
%\noindent
%

\begin{remark}
	\begin{enumerate}[(i)]
	\item 
		The statements in Table~\ref{tab:piau} follow from \cite[Thm.\ 2]{piau1998lowbnd}, which establishes  
		corresponding results for the tail of the annealed distribution $GP[\tau_{R} \ge t]$ of the first regeneration time 
		$\tau_{R}:= \min\{t \in\Nbb: X_{t_{-}} \neq X_{t} \;\forall\, t_{-} <t \text{ and }X_{t-1} \neq X_{t_{+}} \;\forall\, t_{+} \ge t\}$. More precisely, concerning the upper bounds, this is a direct consequence of 
		the inclusion
		$\{X_{t}=o\} \subset \{\tau_{R} \ge t\}$. The lower bounds on $R_{t}$ follow from analogous ideas as used for the lower
		bounds on $GP[\tau_{R} \ge t]$ in \cite{piau1998lowbnd}.
	\item \label{noConstant}
		We refrained from introducing multiplicative constants in front of the exponentials or 
		minimal times for which the results hold because they can be absorbed in the constants 
		$c$ and $c'$ in the exponent. This is always possible because $R_{t} <1$ for every $t\in\Nbb$, 
		which follows, e.g., from the random walk having positive speed \cite[p.\ 569]{peres2016prob} or 
		being transient \cite{Collevecchio.2006}, see also \cite[Lemma~2]{Grimmett.2001} which was 
		announced in \cite{GRIMMETT.1984}. 
	\item 
		Case (a) in Table~\ref{tab:piau} differs from the other cases and is quite well understood: 
		The exponential decay in time results from the random	walk getting lost in a tree where the 
		number of vertices at least doubles in each generation and where there are no deterministic 
		push-backs due to the absence of leaves. Using the inclusion of events 
		\begin{equation}
			\label{large-dev-rel}
 			\{X_{2t}=o\}\subseteq \Big\{\frac{\mathrm{dist}\{o, X_{2t}\}}{2t}\in [0,\epsilon[\,\Big\}
		\end{equation}
		for any $\epsilon>0$, the bound $c'\geq\ln\frac{9}{8}$ for the constant in the lower bound of (a) 
		follows from large-deviation estimates of the speed 
		in \cite[Thm.\ 1.2]{nina2001speed}. In \eqref{large-dev-rel}, 
		$\mathrm{dist}\{\,\pmb\cdot\,,\,\pmb\cdot\,\}$ stands for the graph distance.
	\item	
		The parameter range in (b) of Table~\ref{tab:piau} is complementary to that in (a). 
		The subexponential behaviour with 
		exponent $\frac13$ in the lower bound of (b) is believed to capture the exact long-time 
		asymptotics of $R_{t}$ in this parameter regime. Unfortunately, corresponding upper bounds are not 
		known in such generality but only in the absence of leaves as specified in (c). In this special case,
		a coupling argument allows one to compare $\{X_t\}_{t\in\Nbb_0}$ with a simple random walk on 
		the non-negative integers from which the result follows. The best upper bound valid for all offspring 
		distributions allowed in (b)---and also in (a)---exhibits only a subexponential decay with exponent 
		$\frac16$. If the offspring distribution limits the number of descendants to some maximum as in the 
		case (d), then the exponent of the subexponential upper bound improves to $\frac15$.
	\end{enumerate}
 	\label{PiauRem}
\end{remark}
	
In this paper, we prove a subexponential upper bound for $R_{t}$ with the optimal exponent $\frac13$ 
in the situation of (d) in Table~\ref{tab:piau}. Our method of proof builds upon arguments of Vir\'ag in 
\cite{virag2000bnddegree}. Vir\'ag considers a deterministic, i.e.\ a fixed,  weighted graph 
$\mathbf G$ with a maximal vertex degree $w_{0}$  
and obtains the existence of a minimal time $t({\mathbf G}) \in\Nbb$ such that for all times 
$t \ge t(\mathbf G)$ and all vertices $u,v$ of $\mathbf G$ the heat kernel of the simple random walk on 
$\mathbf G $ obeys the bound \cite[Thm.\ 1.3]{virag2000bnddegree}
\begin{equation}
	\label{vir-est}
	P_{u}^{\mathbf G}[X_{t} = v] \le \exp\{-\alpha(\mathbf G)\, t^{\frac13}\} 
\end{equation}
with an explicit constant $\alpha(\mathbf G) \coloneqq \big(\mkern3mu\boldsymbol{\wtilde{\textbf{\dotlessi}}}
	(\mathbf G)\big)^{2} (w_{0}^{2}/2)^{-\frac13}/9$ given in terms of the \emph{weighted} edge anchored 
expansion constant $\boldsymbol{\wtilde{\textbf{\dotlessi}}}(\mathbf G)$ of $\mathbf G$, see e.g.\
\cite[p.~1589]{virag2000bnddegree} for its definition (denoted there without tilde). Since $\mathbf G$ 
has a maximum vertex degree, the bound \eqref{vir-est} can be reexpressed in terms of the 
\emph{unweighted} edge anchored expansion constant $\mathbf{i}(\Gbf)$ defined in \eqref{T-exp-const}. 
Chen and Peres \cite[Cor.\ 1.3]{ChenPeres2004} proved $\mathbf{i}(\Tbf)>0$ for $G$-a.e.\ 
$\mathbf T \in\Tbb$. Thus, the estimate \eqref{vir-est} gives rise to a \emph{quenched} upper bound for 
the return probability on a Galton--Watson tree for large times, as has also been noted in 
\cite[Ex.\ 6.2]{virag2000bnddegree}. However, this quenched upper bound does not imply an 
\emph{annealed} upper bound for the return probability due to the uncontrolled dependence of the 
minimal time $t(\mathbf G)$ on the graph $\mathbf G$. 

We elaborate on the strategy of \cite{virag2000bnddegree} but replace deterministic arguments which 
lead to the uncontrolled dependence of $t(\mathbf T)$ on $\mathbf T \in\Tbb$ by probabilistic ones. 
This is facilitated by the observation that 
$G\text{-}\essinf_{\Tbf\in\Tbb} \mathbf{i}(\mathbf{T}) > 0$, see Lemma~\ref{hdef}. Thereby we obtain 
Theorem~\ref{Decay} which states an upper bound for the annealed return probability with optimal 
exponent $\frac13$ for offspring distributions with bounded support.

\begin{theorem}\label{Decay}
	Let $\Tbb$ be a supercritical Galton--Watson tree conditioned on non-extinction and assume that its 
	offspring distribution has bounded support. Then, there exists a constant $c>0$ such that for all
	$t\in\Nbb_{0}$ we have 
	\begin{equation}
		\label{eq:virag-cor}
		R_t \leq \exp(- c t^{\frac13}).
	\end{equation}
\end{theorem}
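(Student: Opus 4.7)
My plan is to combine Vir\'ag's quenched heat-kernel bound \eqref{vir-est} with a probabilistic control of its range of validity, using the $G$-essential lower bound on the anchored expansion constant provided by Lemma~\ref{hdef}. Since the offspring distribution has bounded support, the maximum vertex degree is bounded by a deterministic constant $w_{0}$ for every $\Tbf \in \Tbb$. Combined with Lemma~\ref{hdef}, this yields a deterministic $\alpha_{0} > 0$ such that $\alpha(\Tbf) \ge \alpha_{0}$ for $G$-almost every $\Tbf$, whence \eqref{vir-est} reduces to the clean statement $P_{o}^{\Tbf}[X_{2t} = o] \le \exp(-\alpha_{0} (2t)^{1/3})$ for all $2t \ge t(\Tbf)$.

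The remaining issue is the $\Tbf$-dependence of the validity time $t(\Tbf)$, which is what prevents Vir\'ag's quenched bound from integrating directly to an annealed bound. I would circumvent it by splitting, for each $t \in \Nbb$,
\begin{equation*}
	R_{t} \le \exp\bigl(-\alpha_{0} (2t)^{1/3}\bigr) + G\bigl[\bigl\{\Tbf \in \Tbb : t(\Tbf) > 2t\bigr\}\bigr],
\end{equation*}
so that matters reduce to controlling the tail of $t(\Tbf)$. To this end I would retrace the proof of \eqref{vir-est} in \cite{virag2000bnddegree} in order to make the dependence of $t(\Tbf)$ on $\Tbf$ explicit; heuristically it is governed by the size (or diameter) of the exceptional finite set appearing in the definition of anchored expansion at the root. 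On a Galton--Watson tree this exceptional set is essentially the union of finite traps and short backbone excursions near $o$, and under a bounded offspring distribution the sizes of such finite structures have geometrically decaying tails under $G$ via the standard backbone/dangling-subtree decomposition. Such a tail comfortably dominates $\exp(-c (2t)^{1/3})$ for any $c > 0$.

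The hard step is the second one: Vir\'ag's arguments are written for a fixed graph and the implicit constants there are not separated in a form immediately amenable to probabilistic control. The actual work is to extract $t(\Tbf)$ as a function of a small number of root-neighbourhood observables with tame tails under $G$, and then to quantify those tails. A secondary but routine issue is the regime of small $t$, for which \eqref{eq:virag-cor} follows from $R_{t} < 1$ (see Remark~\ref{PiauRem}) by absorbing the shortfall into the constant $c$.
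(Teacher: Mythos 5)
Your high-level architecture --- splitting $R_t$ into a quenched bound on a good event plus the $G$-probability of a bad event --- is exactly the paper's, and your identification of the obstruction (the uncontrolled $\Tbf$-dependence of Vir\'ag's minimal time $t(\Tbf)$) is correct. But the step you defer as ``the actual work'' is the entire content of the proof, and the heuristic you offer for it does not hold up. The minimal time $t(\Tbf)$ in Vir\'ag's argument is \emph{not} governed by a root-neighbourhood observable with geometric tails. It originates in \cite[Lemma 3.5]{virag2000bnddegree}, which asserts that all but finitely many ``bad'' configurations --- unions of $q$-islands of large total volume, reachable from the root by short bridge structures --- are absent. These configurations can sit anywhere in the ball of radius $t$ around the root, and the isolation parameter must itself be taken $t$-dependent ($q_t\sim t^{-1/3}$, cf.\ \eqref{qt}) to produce the exponent $\frac13$. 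Consequently $t(\Tbf)$ is a genuinely global quantity, and the best available tail for the relevant bad events is not geometric but of order $\exp(-c\,t^{1/3})$: it comes from the large-deviation estimate \eqref{Abounds} for the anchored expansion (Lemma~\ref{lem:hdef}), applied to connected sets containing the root of volume at least of order $t^{1/3}$ (Lemma~\ref{badlargeislands}). Your claim that the relevant finite structures have geometrically decaying tails which ``comfortably dominate'' $\exp(-c(2t)^{1/3})$ therefore rests on a mischaracterisation of what $t(\Tbf)$ depends on, and the tail bound you would need cannot be obtained from a backbone/dangling-subtree decomposition near the root alone.

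In other words, the paper does not (and, as far as is known, cannot) use \eqref{vir-est} as a black box; it re-runs Vir\'ag's island/ocean machinery with $t$-dependent events of bad trees, proves $G[M_{t,z_t}\cap H_t]\le\exp(-c_5 t^{1/3})$, and on the complement establishes the quenched bound \eqref{edest} with a uniform constant and a \emph{deterministic} initial time $t_0$ (Theorem~\ref{exactdecay}). Your proposal would become a proof only after carrying out an equivalent programme, so as it stands there is a genuine gap rather than a complete alternative route.
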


The proof of Theorem~\ref{Decay} is deferred to the end of Section~\ref{sec:proofs}.	
It relies on Theorem~\ref{exactdecay} and an approximation argument.

 In order to treat also offspring
distributions with unbounded support we introduce a time-dependent regularisation of the Galton--Watson 
trees which makes their number of offspring (essentially) bounded. Since the random walk has to return to the root 
within a given time $t$ it does not explore generations above level $t$ (in fact, $t/2$), and thus a 
modification of the tree above this level does not change the return probability. Modifications below 
level $t$ do affect the return probability but can be controlled by the decay of the offspring 
distribution. This comes at the expense of weakening the decay exponent of the upper bound.
%, see 
%Theorem~\ref{thm}. Yet, for sufficiently fast decaying offspring distributions, an improvement over the 
%existing upper bound of \cite{piau1998lowbnd} in (e) above is still achieved. 

\begin{theorem}\label{thm}
	Let $\Tbb$ be a supercritical Galton--Watson tree conditioned on non-extinction and assume that its 
	offspring distribution has a super-Gaussian decay according to 
	\begin{equation}
		\label{eq:k-decay}
		p_j\leq c_1\exp(-c_2j^k) \qquad \text{for every~} j\in\Nbb_0,
	\end{equation}
	where $c_1,c_2>0$ and $k>2$ are constants (all independent of $j$).
	Then, the annealed return probability at time $t$ decays at least exponentially in $t^{\frac{1}{3}-\frac{2}{3k}}$, i.e., there is a constant $c>0$ such that for all $t\in\Nbb_0$ we have 
	\begin{equation}
		R_t\leq \exp(-c t^{\frac{1}{3}-\frac{2}{3k}}).
	\end{equation}
\end{theorem}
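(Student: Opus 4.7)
The strategy is to reduce the super-Gaussian case to the bounded-offspring regime of Theorem~\ref{Decay} by truncating offspring counts at a $t$-dependent threshold $N=N(t)$. For $N\in\Nbb$, let $\Tbf_{N}$ denote the tree obtained from $\Tbf$ by retaining only the first $N$ children at every vertex. Under $G^{*}$, the projection $\Tbf_{N}$ is itself a Galton--Watson tree with capped offspring distribution $\tilde p^{N}$ (with $\tilde p_{j}^{N}=p_{j}$ for $j<N$ and $\tilde p_{N}^{N}=\sum_{j\geq N}p_{j}$), which remains supercritical for $N$ large enough.

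The core decoupling observation is that, on the event $\{X_{2t}=o\}$, the bound $\dist(o,X_{t'})\leq\min(t',2t-t')\leq t$ for all $t'\in\{0,\ldots,2t\}$ forces the walk to stay inside $B_{t}(\Tbf)$, so $P_{o}^{\Tbf}[X_{2t}=o]$ depends only on vertices and (full) degrees within $B_{t}(\Tbf)$. On the event $\mathcal E_{t}^{N}\coloneqq\{\text{every } v\in B_{t}(\Tbf)\text{ has at most }N\text{ children in }\Tbf\}$, the truncation does not affect this ball, so $P_{o}^{\Tbf}[X_{2t}=o]=P_{o}^{\Tbf_{N}}[X_{2t}=o]$. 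Decomposing $\{\vv{\Tbf}=\infty\}\cap\mathcal E_{t}^{N}$ into the sub-events $\{\vv{\Tbf_{N}}=\infty\}$ and $\{\vv{\Tbf_{N}}<\infty\}$, and noting that $\vv{\Tbf_{N}}=\infty$ implies $\vv{\Tbf}=\infty$, one arrives at
\begin{equation}
g_{\infty}R_{t}\leq g_{\infty}^{N}R_{t}^{N}+\bigl(g_{\infty}-g_{\infty}^{N}\bigr)+G^{*}(\mathcal E_{t}^{N,c}),
\end{equation}
where $g_{\infty}^{N}\coloneqq G^{*}[\vv{\Tbf_{N}}=\infty]$ and $R_{t}^{N}$ is the annealed return probability on the truncated tree conditioned on its own non-extinction.

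The three terms on the right are estimated separately. Theorem~\ref{Decay} applied to the truncated tree gives $R_{t}^{N}\leq\exp(-c_{N}t^{1/3})$, with $c_{N}\geq c_{*}N^{-2/3}$ after tracking how Vir\'ag's constant $\alpha(\Gbf)$ depends on the maximum offspring through the explicit factor $(w_{0}^{2}/2)^{-1/3}$, combined with a uniform positive lower bound on the essinf of the anchored expansion across the family $\{\tilde p^{N}\}$. A union bound over levels $0,\ldots,t$ of $\Tbf$ yields $G^{*}(\mathcal E_{t}^{N,c})\leq C\lambda^{t}q_{N}$, where $q_{N}\coloneqq\sum_{j>N}p_{j}\leq C'\exp(-c_{2}N^{k}/2)$ by the super-Gaussian tail~\eqref{eq:k-decay}. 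Finally, standard continuity of the smallest fixed point of the offspring generating function gives $g_{\infty}-g_{\infty}^{N}\leq C''q_{N}$.

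Choosing $N(t)=\lceil C_{3}\,t^{1/k}\rceil$ with $C_{3}$ large enough to dominate the $\lambda^{t}$ prefactor, the second and third contributions decay exponentially in $t$, while the first becomes $\exp(-c_{*}N^{-2/3}t^{1/3})=\exp(-c\,t^{1/3-2/(3k)})$, which is the claimed rate. The main obstacle is extracting the $N^{-2/3}$ scaling of $c_{N}$ from Theorem~\ref{Decay}: its proof must be revisited to verify both that Vir\'ag's constant contributes the claimed $N^{-2/3}$ factor and that the probabilistic replacement of Vir\'ag's minimal time $t(\Gbf)$---which rests on Lemma~\ref{hdef}---behaves uniformly across the family $\{\tilde p^{N}\}_{N\geq N_{0}}$ of truncated offspring distributions.
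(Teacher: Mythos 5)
Your reduction is structurally sound and genuinely different from the paper's route: you truncate the offspring distribution at a $t$-dependent level $N(t)\sim t^{1/k}$ and then invoke Theorem~\ref{Decay} as a black box for each member of the family $\{\tilde p^{N}\}$, whereas the paper never changes the underlying Galton--Watson measure. The decoupling on $\mathcal E_t^{N}$, the first-moment bound $G^{*}(\mathcal E_t^{N,c})\le C\lambda^{t}q_{N}$, the choice $N(t)=\lceil C_3 t^{1/k}\rceil$, and the comparison of extinction probabilities are all correct in spirit and mirror the role of Lemma~\ref{SetsOfTrees}.

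The gap is the first term. Theorem~\ref{Decay} only asserts the existence of \emph{some} constant $c>0$ for \emph{one fixed} offspring distribution; it supplies neither the scaling $c_{N}\ge c_{*}N^{-2/3}$ nor any uniformity over the family $\{\tilde p^{N}\}_{N\ge N_0}$, and you cannot extract either without re-running its entire proof with all constants tracked. Concretely, three things must be established uniformly in $N$: (a) the anchored-expansion level $h$ of Remark~\ref{rem:h-small} and the rate $c_{5}$ of Lemma~\ref{lem:hdef} stay bounded away from $0$ along $\{\tilde p^{N}\}$ --- this does not follow from any monotonicity (truncation shrinks edge boundaries, so the events $A(h,n)$ become \emph{more} likely for the truncated tree) and requires redoing the estimate behind \cite[Thm.~6.52]{peres2016prob} with constants uniform in $N$; (b) the degree bound enters the final rate only through a factor $N^{-2/3}$, which is the content of Theorem~\ref{exactdecay} but not of Theorem~\ref{Decay} as stated; and (c) the initial time $t_{0}(N)$, below which the bound is merely absorbed into the constant via $R_t<1$, must not grow with $N$ fast enough to degrade the absorbed constant below $N^{-2/3}$. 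You flag (b) and part of (a) as ``the main obstacle'', but resolving them is where essentially all of the work of the theorem lies, so the proposal defers rather than proves the key step. The paper sidesteps the uniformity problem entirely: it keeps the original measure $G$, imposes the cap $z_t\sim t^{1/k}$ only through the event $M_{t,z_t}$ (whose complement is controlled by Lemma~\ref{SetsOfTrees}), regularises the tree only above height $t$, and therefore needs the anchored-expansion input (one fixed $h$, one fixed $c_{5}$) for a single tree law only, with the degree dependence appearing explicitly as $(t/z_t^{2})^{1/3}$ in Theorem~\ref{exactdecay}.
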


The proof of Theorem \ref{thm} is also deferred to the end of Section~\ref{sec:proofs}.

\begin{remark}
		Whereas Theorem~\ref{Decay} improves the exponent $\frac15$ in (d) of Table~\ref{tab:piau} to the optimal value $\frac13$, 
		Theorem~\ref{thm} yields an improvement to the exponent $\frac16$ in (e) of Table~\ref{tab:piau} for $k >4$. 
\end{remark}

For even faster decaying offspring distributions, we get arbitrarily close to the optimal exponent.   
	
\begin{corollary}\label{twiceExp}
	Let $\Tbb$ be a supercritical Galton--Watson tree conditioned on non-extinction and assume that its 
	offspring distribution is very fast decaying according to 
	\begin{equation}
		p_j\leq \exp\big(- \xi(j)\big) \qquad \text{for every~} j\in\Nbb_0 ,
	\end{equation}
	where $\xi: \mathbb{N}_{0} \rightarrow \,]0,\infty[\,$ (independent of $j$) grows faster than any polynomial.  
	Then, for every $\varepsilon>0$, the annealed return probability at time $t$ decays at least
	exponentially in $t^{\frac{1}{3}-\epsilon}$, i.e., there is a constant $c>0$ such that for all 
	$t\in\Nbb_0$ we have 
	\begin{equation}
		R_t\leq \exp(-c t^{\frac{1}{3}-\epsilon}).
	\end{equation}
\end{corollary}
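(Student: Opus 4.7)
The plan is to reduce Corollary~\ref{twiceExp} directly to Theorem~\ref{thm} by choosing the exponent $k$ in \eqref{eq:k-decay} large enough. Given $\varepsilon > 0$, I would first fix an integer $k > \max\{2, 2/(3\varepsilon)\}$, so that $\tfrac{1}{3} - \tfrac{2}{3k} > \tfrac{1}{3} - \varepsilon$. The rest of the argument then amounts to checking the hypothesis \eqref{eq:k-decay} of Theorem~\ref{thm} for this $k$ and reading off the desired bound.

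Next I would verify that the super-polynomial decay assumption implies \eqref{eq:k-decay} for this $k$. Since $\xi$ grows faster than every polynomial, $\xi(j)/j^k \to \infty$ as $j \to \infty$, so there exists $j_0 \in \Nbb$ with $\xi(j) \geq j^k$ for all $j \geq j_0$, and in this range $p_j \leq \exp(-j^k)$. For the finitely many remaining indices $j < j_0$, one only has $p_j \leq 1$, but combining this with $\exp(-j^k) \geq \exp(-j_0^k)$ gives $p_j \leq c_1 \exp(-j^k)$ with $c_1 \coloneqq \exp(j_0^k)$. Hence \eqref{eq:k-decay} is satisfied with this $c_1$, $c_2 \coloneqq 1$, and the chosen $k > 2$.

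Applying Theorem~\ref{thm} then provides a constant $c > 0$ (depending on $\varepsilon$ through $k$) such that $R_t \leq \exp(-c\, t^{1/3 - 2/(3k)})$ for every $t \in \Nbb_0$. Because $\tfrac{1}{3} - \tfrac{2}{3k} > \tfrac{1}{3} - \varepsilon$, one has $t^{1/3 - 2/(3k)} \geq t^{1/3 - \varepsilon}$ for all $t \geq 1$, and the bound is trivial at $t = 0$; this yields the claim $R_t \leq \exp(-c\, t^{1/3 - \varepsilon})$. There is no real obstacle in this argument: the corollary is essentially formal given Theorem~\ref{thm}, and the only point requiring any care is the translation of super-polynomial decay of $\xi$ into the polynomial-in-$j$ exponent demanded by \eqref{eq:k-decay}, which is handled by absorbing small values of $j$ into the multiplicative constant $c_1$.
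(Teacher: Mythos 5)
Your proposal is correct and follows essentially the same route as the paper: reduce to Theorem~\ref{thm} by choosing $k$ large enough that $\frac{2}{3k}\le\varepsilon$ and verifying \eqref{eq:k-decay} from the super-polynomial growth of $\xi$. Your extra step of absorbing the finitely many small indices $j<j_0$ into the constant $c_1$ just makes explicit what the paper leaves implicit.
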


\begin{proof}
	Fix $\epsilon>0$. Then, there is $k>2$ such that $\epsilon\geq\frac{2}{3k}$. 
	Now, we choose $c_1,c_2>0$ such that $p_j\leq c_1\exp(-c_2j^k)$ for every $j\in\Nbb_0$. This is 
	possible, since $\xi$ grows faster than any polynomial. Then, we can apply Theorem~\ref{thm} and 
	obtain for all $t\in\Nbb_0$
	\begin{equation}
		R_t\leq \exp(-ct^{\frac{1}{3}-\frac{2}{3k}})\leq \exp(-ct^{\frac{1}{3}-\epsilon}),
	\end{equation}
	with a constant $c>0$.
\end{proof}

In the rest of this paper, we will conduct the proof of Theorems~\ref{Decay} and \ref{thm}. 
In view of the known bound (c) in Table~\ref{tab:piau}, it suffices to do this 
under the following

\begin{assumption}
	\label{gen-ass}
	Let $\Tbb$ be a supercritical Galton--Watson tree conditioned on non-extinction with 
	offspring distribution satisfying $p_{0}>0$ and decaying as in \eqref{eq:k-decay} for some fixed
	exponent $k>2$.
\end{assumption}

Assumption~\ref{gen-ass} applies to the rest of this paper without mentioning it explicitly 
in our statements below. Boundedness of the support of $\{p_j\}_{j\in\Nbb_0}$, which is required for 
Theorem~\ref{Decay}, will be indicated where it is needed.

%%%%%%%%%%%%%%%%%%%%%%%%%%%%%%%%%%%%%%%%%%%%%%%%%%%%%%%%%%%%%%%%%
%%%%%%%%%%%%%%%%%%%%%%%%%%%%%%%%%%%%%%%%%%%%%%%%%%%%%%%%%%%%%%%%%

\section{Anchored expansion and events of bad trees}

We introduce time-dependent events of bad trees which we will exclude in our further analysis. 
Their negligence will result only in an exponentially small error term in the annealed average. 
In order to formulate these events, we introduce some notation. 

If $\Gbf$ is a graph and $x$ is a vertex of $\Gbf$, we briefly write $x\in\Gbf$ 
(instead of referring to the vertex set of $\mathbf G$). For $\Tbf \in \Tbb$ and a vertex 
$x\in\mathbf{T}$, the number of children of $x$ is denoted by $Z_{\Tbf}(x)$. 
From the branching process point of view, these are the values of i.i.d.-copies of a random variable 
$Z$ with distribution $G^{*}[Z=j] = p_{j}$ for every $j\in\Nbb_{0}$.
The position $X_{t}$ of the random walk on a tree $\Tbf$ with starting point at the root explores at 
most the subtree up to generation $t$ of the tree. We denote this subtree by $\mathbf{T}_{ot}$.
The first bad event consists of subtrees $\mathbf{T}_{ot}$ that possess a vertex with too many 
descendants.

\begin{lemma}\label{SetsOfTrees}
	For $t\in\Nbb$ we define the event 
	\begin{equation}
		\label{Ftdefeq}
		F_{t}  \coloneqq  \big\{\Tbf\in\Tbb: \exists\, x\in \mathbf{T}_{ot}:\; Z_{\Tbf}(x) 
			\ge c_{3} t^{\frac{1}{k}}\big\},
	\end{equation}
	where $c_{3} >3+ (\ln\lambda)/c_{2}$ is a constant. The constants $k>2$ and $c_{2}>0$ were specified in 
	\eqref{eq:k-decay} and
	$\lambda>1$ in \eqref{lambda-def}. Then there exists a constant $C>0$ such that for every $t\in\Nbb$ 
	we have
	\begin{equation}
		G[F_{t}]\leq C\exp(-c_{4}t)
	\end{equation}
	with decay rate $c_{4} \coloneqq \min\{c_{2},c_{2}(c_{3}-3) -\ln\lambda\} >0$.
\end{lemma}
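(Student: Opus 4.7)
The plan is a standard union bound over the Ulam--Harris vertex labels in the first $t$ generations, exploiting that under the unconditional Galton--Watson law the offspring count at a vertex is independent of the event that this vertex belongs to the tree.

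First, since $G[\,\pmb\cdot\,]\leq G^{*}[\,\pmb\cdot\,]/g_{\infty}$, it suffices to produce the corresponding bound for $G^{*}[F_{t}]$. Realising the tree through an i.i.d.\ family $(Y_{v})_{v}$ of copies of $Z$ indexed by Ulam--Harris labels $v\in\bigcup_{n\geq 0}\Nbb^{n}$, the event $\{v\in\Tbf\}$ depends only on the offspring counts of the strict ancestors of $v$ and is therefore independent of $Y_{v}=Z_{\Tbf}(v)$. Union-bounding over all candidate positions yields
\begin{equation*}
G^{*}[F_{t}]\leq\sum_{n=0}^{t}\sum_{v\in\Nbb^{n}}G^{*}[v\in\Tbf]\cdot G^{*}\bigl[Z\geq c_{3}t^{\frac{1}{k}}\bigr],
\end{equation*}
in which the inner sum equals the expected size $\lambda^{n}$ of generation $n$, so that the $n$-summation contributes at most $\lambda^{t+1}/(\lambda-1)$.

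It remains to control the single-vertex tail. Using~\eqref{eq:k-decay} together with the elementary convexity estimate $j^{k}\geq c_{3}^{k}t+kc_{3}^{k-1}t^{(k-1)/k}(j-c_{3}t^{1/k})$ valid for $j\geq c_{3}t^{1/k}$, the tail series becomes geometric and is dominated by a constant multiple of $\exp(-c_{2}c_{3}^{k}t)$. Combining the two estimates produces an upper bound of the form $C\exp\bigl(-(c_{2}c_{3}^{k}-\ln\lambda)t\bigr)$, and because $c_{3}^{k}\geq c_{3}\geq c_{3}-3$ for the admissible range $c_{3}>3+(\ln\lambda)/c_{2}$, this is at least as strong as the claimed $C\exp(-c_{4}t)$; positivity of the exponent is built into the same hypothesis on $c_{3}$.

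I do not expect any serious obstacle. The decisive ingredient is the independence of $Z_{\Tbf}(v)$ from $\{v\in\Tbf\}$ under $G^{*}$, which is immediate from the Galton--Watson construction. The only mildly delicate point is choosing the threshold as $c_{3}t^{1/k}$ so that the tail factor $\exp(-c_{2}(c_{3}t^{1/k})^{k})=\exp(-c_{2}c_{3}^{k}t)$ is exponentially small in $t$ with a rate large enough to absorb the $\lambda^{t+1}$ potential positions for $x$ in $\Tbf_{ot}$.
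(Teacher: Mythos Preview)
Your argument is correct, but it differs from the paper's route. The paper does \emph{not} exploit the independence of $Z_{\Tbf}(v)$ from $\{v\in\Tbf\}$ under $G^{*}$; instead it introduces an auxiliary event $B_{t}=\{|\Tbf_{ot}|\ge \exp(c't)\}$ with $c'=c_{2}(c_{3}-3)$, bounds $G[B_{t}]$ by Chebyshev using $E_{G^{*}}[|\Tbf_{ot}|]\le\lambda^{t+1}/(\lambda-1)$, and on the complement $F_{t}\setminus B_{t}$ union-bounds over the at most $\exp(c't)$ vertices present, then controls the single-vertex tail by an integral estimate. The two resulting rates $c'-\ln\lambda$ and $c_{2}$ produce the stated $c_{4}$ as their minimum.

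Your approach is cleaner and strictly sharper: by factoring the union bound through the Ulam--Harris labels you collapse both parts of the paper's decomposition into a single product $\big(\sum_{n\le t}\lambda^{n}\big)\cdot G^{*}[Z\ge c_{3}t^{1/k}]$, obtaining the decay rate $c_{2}c_{3}^{k}-\ln\lambda$, which dominates $c_{4}$ because $c_{3}^{k}>c_{3}-3$. What your method buys is avoiding the artificial splitting and the somewhat wasteful Chebyshev step; what the paper's method buys is that it does not need to invoke the Ulam--Harris independence explicitly, relying only on the deterministic cardinality bound on $B_{t}^{c}$. For the purpose of the paper either bound suffices, since only exponential decay in $t$ is used downstream.
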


\begin{proof}
	Let $t\in\Nbb$ be fixed.
	We define 
	\begin{equation}
			B_{t}  \coloneqq \big\{\Tbf\in\Tbb: \vv{\mathbf{T}_{ot}}\geq \exp(c't)\big\}
	\end{equation}
	with $c' \coloneqq c_{2}(c_{3}-3)> \ln\lambda$ and decompose 
	\begin{equation}
			F_{t} \subseteq (F_{t} \setminus B_{t}) \cup B_{t}.
	\end{equation}
	As to the decay of the probability of $B_{t}$, we recall the expectation of the size
	\begin{equation}
		\label{eq:Atproof}
		\int_{\Tbb}\!\d G^*(\Tbf) \, \vv{\mathbf{T}_{ot}}
		= \sum_{j=0}^{t}\lambda^j
		= \frac{\lambda^{t+1}-1}{\lambda-1},
	\end{equation}
	from \cite[Prop.\ 5.5]{peres2016prob} and obtain from Chebyshev's inequality 
	\begin{equation}
		\label{eq:At-decay}
		G[B_{t}]\leq\frac{1}{g_{\infty}} \, G^{*}[B_{t}]
		\leq \frac{1}{g_{\infty}} \, \frac{\lambda^{t+1}-1}{\lambda-1}\exp(-c't).
	\end{equation}
	Here, the probability $g_{\infty}>0$ of non-extinction was defined in \eqref{g-infty-def}.
	Due to $c' > \ln\lambda$, the right-hand side of \eqref{eq:At-decay} decays exponentially in $t$ 
	with rate $c' - \ln\lambda >0$.

	Next, we turn to the probability of $F_{t} \setminus B_{t}$. Since we are in the complement of the
	event $B_{t}$ we estimate
	\begin{equation}
		G[F_{t} \setminus B_{t}]\leq\frac{1}{g_{\infty}} \, \exp(c't)
		\sum_{j=\lfloor {c_{3}t^{\frac{1}{k}}} \rfloor}^{\infty}G^{*}[Z=j],  
	\end{equation}
	where $\lfloor r \rfloor$ denotes the largest integer not exceeding $r \in\Rbb$.
	Inserting the decay \eqref{eq:k-decay} of the offspring distribution $p_{j} = G^{*}[Z=j]$ and 
	estimating the resulting sum by an integral, we obtain 
	\begin{equation}
		\label{eq:Bt-decay}
		G[F_{t} \setminus B_{t}] \leq\frac{c_1}{c_{2}\,g_{\infty}}\, \exp(c't)
			\exp\big(-c_2(c_{3}-2)^{k}t\big) 
		\leq  \frac{c_1}{c_{2}\,g_{\infty}}\,\exp(-c_2t).
	\end{equation}
	Combining \eqref{eq:At-decay} and \eqref{eq:Bt-decay}, the claim follows.
\end{proof}

\noindent
The following version of an (edge) anchored expansion constant of a Galton--Watson tree $\Tbb$ 
will be useful to us.	
\begin{definition}
	Given a rooted tree $\Tbf$ for which each vertex has finite degree, we set 
		\begin{equation}
		\label{T-exp-const}
 		\mathbf{i}(\mathbf{T})  \coloneqq  \lim_{n\to\infty}\inf\Bigl\{\frac{\ev{\partial K}}{\vv{K}}:\;o\in K\subset\mathbf{T}\mbox{ connected}, n\leq \vv{K}<\infty\Bigr\}.
	\end{equation}
	Here $\partial K$ denotes the edge boundary of $K$, i.e.\ the set of edges 
	of $\Tbf$ which connect a vertex in the subgraph $K$ with a vertex in the complement $\Tbf\setminus K$, and $\ev{\partial K}$ is its cardinality. 
	The anchored expansion constant of a Galton--Watson tree is then defined as
	\begin{equation}
		\label{TTbounds}
 		\mathbf{i}_{\mkern2mu\Tbb}  \coloneqq  G\mkern-1mu\text{-}\mkern-1mu\essinf_{\hspace{-1em}\Tbf\in\Tbb} 
			\,\mathbf{i}(\mathbf{T}).
	\end{equation}
\end{definition}

\noindent
Chen and Peres \cite[Cor.\ 1.3]{ChenPeres2004}, see also  \cite[Thm.\ 6.52]{peres2016prob}, 
proved $\mathbf{i}(\mathbf{T}) >0$ for $G$-a.e.\ $\Tbf\in\Tbb$ without any further assumptions on 
the offspring distribution besides being supercritical. 
%Assuming in addition that $\Tbb$ has leaves, i.e.\ $p_{0}>0$, 
This can be strengthened. 

\begin{lemma}\label{hdef}
	Assume that $\Tbb$ is supercritical. Then 
	\begin{equation}\label{iTTbounds}
 		\mathbf{i}_{\mkern2mu\Tbb} > 0.
	\end{equation}
\end{lemma}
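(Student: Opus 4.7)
The plan is to show that $\mathbf{i}(\mathbf{T})$ is $G$-almost surely equal to a single constant $c$; combined with the Chen--Peres result $\mathbf{i}(\mathbf{T})>0$ $G$-a.s., this immediately forces $c>0$ and hence $\mathbf{i}_{\mkern2mu\Tbb}=c>0$.

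First I would establish the pointwise comparison $\mathbf{i}(\mathbf{T})\le \mathbf{i}(\mathbf{T}_v)$ for every surviving child $v$ of the root $o$, where $\mathbf{T}_v$ denotes the descendant subtree rooted at $v$. For any connected $K\subset \mathbf{T}_v$ with $v\in K$, the set $K':=K\cup\{o\}$ is a connected subset of $\mathbf{T}$ containing $o$, with $|K'|=|K|+1$ and $|\partial K'|=|\partial K|+(Z_{\mathbf{T}}(o)-1)$. Since $Z_{\mathbf{T}}(o)$ is a fixed finite quantity, $|\partial K'|/|K'|$ and $|\partial K|/|K|$ share the same limit as $|K|\to\infty$, and passing to infima in \eqref{T-exp-const} yields $\mathbf{i}(\mathbf{T})\le \mathbf{i}(\mathbf{T}_v)$.

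By the branching property, under $G$ the subtree $\mathbf{T}_v$ at any surviving child $v$ of $o$ is itself distributed as $\mathbf{T}$ under $G$, so $\mathbf{i}(\mathbf{T}_v)\stackrel{d}{=}\mathbf{i}(\mathbf{T})$. Two identically distributed random variables related by the pointwise inequality of the previous step must be $G$-a.s.\ equal; to dodge any integrability issue one truncates $\mathbf{i}$ at level $N$, compares expectations, and lets $N\to\infty$. Let $M$ denote the number of surviving children of $o$; the combination $\lambda>1$ and $p_0>0$ forces $p_j>0$ for some $j\ge 2$, and hence $G[M\ge 2]>0$. Conditionally on $\{M\ge 2\}$, two distinct surviving subtrees $\mathbf{T}_{v_1},\mathbf{T}_{v_2}$ are independent and each distributed as $\mathbf{T}$, so the previous step forces $\mathbf{i}(\mathbf{T}_{v_1})=\mathbf{i}(\mathbf{T})=\mathbf{i}(\mathbf{T}_{v_2})$ $G$-a.s. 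But two independent identically distributed random variables that agree almost surely must be almost surely constant, so there exists $c\in[0,\infty)$ with $\mathbf{i}(\mathbf{T})=c$ $G$-a.s.\ on $\{M\ge 2\}$.

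To extend this to all of $\Tbb$, write $\mu$ for the law of $\mathbf{i}(\mathbf{T})$ under $G$ and set $p:=G[M=1]<1$. On $\{M=1\}$ one has $\mathbf{i}(\mathbf{T})=\mathbf{i}(\mathbf{T}_{v_1})$ with $\mathbf{T}_{v_1}\sim G$, so the distributional recursion reads $\mu=(1-p)\delta_c+p\mu$, forcing $\mu=\delta_c$. Hence $\mathbf{i}(\mathbf{T})=c$ $G$-a.s., and the Chen--Peres positivity statement upgrades this to $c>0$. The main conceptual move is the symmetrisation via independence of two surviving siblings, which lets one deduce a deterministic value from an a.s.\ equality between i.i.d.\ random variables. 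The main technical nuisance is the pointwise comparison in the first step, where one must check that adjoining the single vertex $o$ to an infimising sequence does not disturb the $n\to\infty$ limit; the remainder of the proof is distributional bookkeeping relying only on elementary probability and the branching property of Galton--Watson trees.
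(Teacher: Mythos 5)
Your argument is correct, but it is a genuinely different proof from the paper's. The paper proceeds by cases on the offspring law: for $p_0=p_1=0$ a counting argument gives $\mathbf{i}_{\mkern2mu\Tbb}\ge 1$, the case $p_0=0\neq p_1$ is reduced to it by subdivision, and for $p_0>0$ the quantitative estimate \eqref{Abounds} from the proof of Lyons--Peres, Thm.~6.52, is combined with Borel--Cantelli to give the deterministic bound $\mathbf{i}(\Tbf)\ge h$ for $G$-a.e.\ $\Tbf$. You instead show that $\mathbf{i}(\Tbf)$ is $G$-a.s.\ \emph{constant}---via the pointwise monotonicity $\mathbf{i}(\Tbf)\le\mathbf{i}(\Tbf_v)$ for surviving children $v$ (your computation of $|\partial K'|$ is correct), the branching property under $G$, and the fact that two a.s.-equal i.i.d.\ variables are a.s.\ constant---and then import positivity from Chen--Peres as a black box. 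This is softer and more conceptual, and it needs no case distinction; what it does not deliver is the quantitative estimate \eqref{Abounds}, which the paper's proof produces en route and reuses verbatim for Lemma~\ref{lem:hdef}. Since \eqref{Abounds} is quoted from Lyons--Peres anyway, your proof remains compatible with the rest of the paper.

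One step needs more care than you give it: passing from $\mathbf{i}(\Tbf)=\mathbf{i}(\Tbf_{v_1})$ $G$-a.s.\ to $\mathbf{i}(\Tbf)=\mathbf{i}(\Tbf_{v_2})$ a.s.\ on $\{M\ge2\}$. The truncated-expectation trick works for the first surviving child because the expectations of $\min(\mathbf{i}(\Tbf_{v_1}),N)$ and of $\min(\mathbf{i}(\Tbf),N)$ under $G$ coincide \emph{unconditionally}; on $\{M\ge2\}$ the conditional law of $\mathbf{i}(\Tbf)$ need not equal $\mu$, so the same comparison is not available there. The fix is one line: conditionally on the offspring number and the survival pattern, the surviving subtrees are exchangeable and $\mathbf{i}(\Tbf)$ is invariant under permuting them, so the a.s.\ identity for $v_1$ transfers to $v_2$. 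With that in place, your diagonal argument already forces $\mu=\delta_c$ outright, since given $\{M\ge2\}$ the pair $\big(\mathbf{i}(\Tbf_{v_1}),\mathbf{i}(\Tbf_{v_2})\big)$ is i.i.d.\ with law $\mu$ and concentrated on the diagonal; your closing recursion over $\{M=1\}$ is therefore redundant. Finally, $p_0>0$ is neither assumed in the lemma nor needed: $\lambda>1$ alone forces $p_j>0$ for some $j\ge2$, hence $G[M\ge2]>0$.
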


\begin{remark}
	Assumption~\ref{gen-ass} is not used for Lemma~\ref{hdef}.
%	The proof of Lemma~\ref{hdef} does not need the offspring distribution to decay sufficiently fast 
%	as requested by Assumption~\ref{gen-ass} (which is in force). 
%	We prove the lemma below for $p_{0}>0$, using arguments 
%	from the proof of \cite[Thm.\ 6.52]{peres2016prob}. In the case $p_{0}=p_{1}=0$, 
%	the proof of \cite[Thm.\ 6.52]{peres2016prob} shows that \eqref{iTTbounds} follows immediately. 
%	We also expect Lemma~\ref{hdef} to hold in the case $p_{0}=0\neq p_{1}$. 
\end{remark}

\begin{proof}[Proof of Lemma~\ref{hdef}]
	We follow and build upon the proof of \cite[Thm.\ 6.52]{peres2016prob}.  \\
	\emph{Case {\upshape (i):} $p_{0}=p_{1}=0$}.~~ With each vertex having at least two descendants, the estimate 
	\begin{equation}
		|K| \le |\partial K| \sum_{j\in\Nbb} 2^{-j} = |\partial K|
	\end{equation}
	for any $K$ as in \eqref{T-exp-const} yields $\mathbf{i}_{\mkern2mu\Tbb} \ge 1$. \\
	\emph{Case {\upshape (ii):} $p_{0}=0 \neq p_{1}$}.~~ As in (ii) of the proof of \cite[Thm.\ 6.52]{peres2016prob}, this case is obtained from a random subdivision of Case (i). We write $\mathbf{i}_{\mkern2mu\Tbb, (\i)}$ 
	for the anchored expansion constant from Case (i), whence $\mathbf{i}_{\mkern2mu\Tbb, (\i)} \ge 1$. The proof of \cite[Thm.\ 6.50]{peres2016prob} shows that there exists a constant $c >0$, which is determined only by the rate function of large deviations for the geometric distribution, such that 
	\begin{equation}
		\mathbf{i}_{\mkern2mu\Tbb} \ge \frac{1}{c}\, \frac{1}{1+ 1/\mathbf{i}_{\mkern2mu\Tbb, (\i)}}
		\ge \frac{1}{2c}.
	\end{equation}
	 \\
	\emph{Case {\upshape (iii):} $p_{0} \neq 0$}.~~
	It is shown in (iii) of the proof of \cite[Thm.\ 6.52]{peres2016prob} that, 
	given any $h>0$ sufficiently small, the probability of the events 
	\begin{equation}\label{Adef}
 		A(h,n) \coloneqq \Bigl\{\mathbf{T}\in\Tbb: \exists\, K \subset \mathbf{T} \mbox{ connected with}\; o\in K, |K|= n, |\partial K| \leq hn \Bigr\}
	\end{equation}
	decays exponentially
	\begin{equation}\label{Abounds}
 		G^*[A(h,n)]\leq \exp(-c_hn)
	\end{equation}
	for $n\in\Nbb$, where $c_h>0$ is a constant depending on $h$ (but not on $n$).
	Hence, we have $\sum_{n\in\Nbb}G[A(h,n)]\leq\frac{1}{g_{\infty}}\sum_{n\in\Nbb}G^*[A(h,n)]<\infty$, 
	and the Borel--Cantelli lemma implies that the event $A(h) \coloneqq \limsup_{n\to\infty}A(h,n)$ is a $
	G$-null set. We conclude that 
	\begin{equation}
 		\mathbf{i}_{\mkern2mu\Tbb} \geq \inf_{\Tbf\in\Tbb\setminus A(h)} \; \lim_{n\to\infty}
			\inf\Bigl\{\frac{|\partial K|}{|K|}:\;o\in K\subset\mathbf{T}\in\Tbb\mbox{ connected}, 
				n\leq |K|<\infty\Bigr\} 
		\geq h>0.  
	\end{equation}
\end{proof}

\noindent
The estimate \eqref{Abounds} immediately implies the next lemma. It bounds the probability of the
event $\bigcup_{n\ge t} A(h,n)$, which we will also exclude later. 
Since it is important to us, we state it separately.

\begin{lemma} \label{lem:hdef}
	There exist constants $h \in \,]0, \mathbf{i}_{\mkern2mu\Tbb}[\,$ and $c_{5}>0$ such that for every 
	$t\in\Nbb$ the event 
	\begin{equation}
		\label{D_t-def}
		D_t \coloneqq \bigcup_{n\ge t} A(h,n) = \Bigl\{\Tbf \in \Tbb: \exists\, o\in K\subset\Tbf\mbox{ connected}, t\leq \vv{K}<\infty, 
			\frac{\ev{\partial K}}{\vv{K}}\leq h\Bigr\}
	\end{equation}
	has exponentially small probability  
	\begin{equation} 
		\label{D_t-bounds}
		G[D_t]\leq\exp(-c_{5}t).
	\end{equation}
\end{lemma}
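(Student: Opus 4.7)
The plan is to follow the author's hint and derive the bound \eqref{D_t-bounds} directly from the exponential estimate \eqref{Abounds} in the proof of Lemma \ref{hdef}, via a union bound over $n\ge t$ and summation of a geometric series.

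Concretely, I would first pick $h_{0}>0$ small enough so that case (iii) of the proof of Lemma \ref{hdef} applies---recall that under Assumption \ref{gen-ass} we have $p_{0}>0$, so we are indeed in this case, and therefore $G^{*}[A(h_{0},n)]\le\exp(-c_{h_{0}}n)$ as well as $\mathbf{i}_{\mkern2mu\Tbb}\ge h_{0}$. To enforce the strict inequality $h<\mathbf{i}_{\mkern2mu\Tbb}$ required by the statement, I would then set $h\coloneqq h_{0}/2$; since $A(h,n)\subseteq A(h_{0},n)$, the same exponential bound carries over with the same rate $c_{h_{0}}$, and now $h<h_{0}\le\mathbf{i}_{\mkern2mu\Tbb}$. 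The identification $D_{t}=\bigcup_{n\ge t}A(h,n)$ is immediate from the definitions \eqref{Adef} and \eqref{D_t-def}.

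A union bound together with the standard comparison $G[\,\pmb\cdot\,]\le g_{\infty}^{-1}\,G^{*}[\,\pmb\cdot\,]$ then yields
\begin{equation}
G[D_{t}]\le\frac{1}{g_{\infty}}\sum_{n\ge t}G^{*}[A(h,n)]\le\frac{1}{g_{\infty}}\sum_{n\ge t}\exp(-c_{h_{0}}n)=\frac{\exp(-c_{h_{0}}t)}{g_{\infty}\bigl(1-\exp(-c_{h_{0}})\bigr)}.
\end{equation}
To conclude, I would absorb the multiplicative constant into the exponential by picking any $c_{5}\in\,]0,c_{h_{0}}[\,$: for $t$ large enough the constant is dominated by $\exp\bigl((c_{h_{0}}-c_{5})t\bigr)$, and for the remaining finitely many small $t$ the trivial estimate $G[D_{t}]\le 1$ completes the argument, in the spirit of Remark \ref{PiauRem}(\ref{noConstant}). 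There is no substantial obstacle here; the proof is essentially a one-line geometric summation. The only step that requires minor care is the bookkeeping needed to upgrade the weak inequality $\mathbf{i}_{\mkern2mu\Tbb}\ge h$ coming from Lemma \ref{hdef} to the strict inequality $h<\mathbf{i}_{\mkern2mu\Tbb}$ demanded by the statement, which is handled by the shrinking step.
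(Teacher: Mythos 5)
Your proof follows the paper's route exactly: a union bound over the events $A(h,n)$ from \eqref{Adef}, the exponential estimate \eqref{Abounds}, a geometric series, and absorption of the multiplicative prefactor into the exponent. The halving $h\coloneqq h_0/2$ to guarantee the strict inequality $h<\mathbf{i}_{\mkern2mu\Tbb}$ is precisely the kind of adjustment the paper makes in Remark~\ref{rem:h-small}, so that part of your bookkeeping is in order.

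There is, however, one slip in the closing step. To cover the finitely many small $t$ you invoke ``the trivial estimate $G[D_t]\leq 1$,'' but that bound cannot close the argument: the target $\exp(-c_5 t)$ is strictly below $1$ for every $t\in\Nbb$, so $G[D_t]\leq 1$ is of no help. What Remark~\ref{PiauRem}\eqref{noConstant} really uses is the \emph{strict} inequality $R_t<1$, and the analogue $G[D_t]<1$ for small $t$ does not follow from your geometric-series bound alone, since the prefactor $\bigl(g_\infty(1-\exp(-c_{h_0}))\bigr)^{-1}$ may well exceed $1$. The missing observation is easy to supply: under $G$ the tree is a.s.\ infinite, so every finite connected $K\ni o$ satisfies $|\partial K|\geq 1$, whence $|\partial K|/|K|\leq h$ forces $|K|\geq 1/h$; consequently $D_t=D_{\lceil 1/h\rceil}$ holds $G$-a.s.\ for all $t\leq\lceil 1/h\rceil$. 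After shrinking $h$ once more (which only shrinks the events $D_t$ and, by $A(h',n)\subseteq A(h,n)$, preserves the decay rate $c_{h_0}$), the quantity $\exp(-c_{h_0}\lceil 1/h\rceil)/\bigl(g_\infty(1-\exp(-c_{h_0}))\bigr)$ drops below $1$, giving $G[D_t]\leq G[D_{\lceil 1/h\rceil}]<1$ for all $t\in\Nbb$. With this in hand, your absorption of the constant goes through as intended.
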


\begin{remark}
	\label{rem:h-small}
	Since $A(h',n) \subseteq A(h,n)$ for every $h'\in\, ]0, h]$ and every $n\in\Nbb$, the estimate 
	\eqref{D_t-bounds} continues to hold for any $h'\in \, ]0, h]$ with the same constant $c_{5}$. 
	From now on, w.l.o.g.\ we fix a constant $h \in \,]0,\min\{1,\mathbf{i}_{\mkern2mu\Tbb}\}[\,$ 
	for which Lemma~\ref{lem:hdef} holds. 
\end{remark}

We will now introduce some basic notions as in \cite[Sect.~3]{virag2000bnddegree}
to exploit the consequences of a positive anchored expansion constant.
But whereas Vir\'ag works with weighted volumes, ours refer to the cardinality of the sets in 
accordance with our previous definitions. 

\begin{definition}
	Let $q>0$ and let $\Tbf$ be a rooted tree for which each vertex has finite degree.
	\begin{enumerate}[\upshape(i)]
	\item
		The $q$-\emph{isolation} of a (possibly empty) finite vertex subset $S\subseteq\Tbf$ is given by
		\begin{equation}
			\Delta_q S \coloneqq \Delta_q^{\Tbf} S \coloneqq q\vv{S}-\ev{\partial S}.
		\end{equation}
		We will omit the superscript $\Tbf$ when there is no danger of confusion.
	\item
		We say that a finite vertex subset $S \subseteq \Tbf$ is $q$-\emph{isolated} whenever 
		\begin{equation}
			\Delta_q S >0.
		\end{equation}
	\item
		A (possibly empty) finite vertex subset set $S\subseteq \Tbf$ is called a $q$-\emph{isolated core} 
		of $\Tbf$ whenever 
		\begin{equation}
			\Delta_q S>\Delta_q A \quad \text{for every } A\subsetneq S.
		\end{equation}
	\item
		We write $A_{q} \coloneqq A_q(\Tbf)$ for the union of all $q$-isolated cores of $\Tbf$ and call 
		any connected component of $A_q$ a ($q$-)\emph{island}. The complement $\Tbf\setminus A_q$ is called 
		the ($q$-)\emph{oceans}.
	\end{enumerate}
\end{definition}

\begin{remark}
	\begin{enumerate}[(i)]
	\item
		\label{core-isolated}
		A non-empty $q$-isolated core is itself $q$-isolated because the subset $A$ in the definition can 
		be chosen as the empty set with $q$-isolation $\Delta_{q}\emptyset=0$.
	\item 
		\label{core-connected}
		Every connected component of a non-connected $q$-isolated core is itself a $q$-isolated core. 
		This follows from the additivity of the $q$-isolation w.r.t.\ connected components and by choosing 
		the subset $A$ to be the union of a proper subset of one connected component together with 
		all other connected components.
	\item
		It will turn out that the $q$-islands act as traps for the random walk and thus prevent us from obtaining 
		suitable heat-kernel bounds. Restricting the random walk to the $q$-oceans will allow us to 
		benefit from non-anchored, i.e.\ global, isoperimetric constants. 
	\end{enumerate}
	\label{core-rem}
\end{remark}

\noindent 
The definition \eqref{TTbounds} of the anchored expansion constant for Galton--Watson trees 
$\mathbf{i}_{\mkern2mu\Tbb}$ and Lemma~\ref{det-VolumeBound} directly imply

\begin{corollary}\label{VolumeBound}
	Let $q \in \,]0, \mathbf{i}_{\mkern2mu\Tbb}[\,$. Then, for $G$-almost every $\Tbf\in\Tbb$, every 
	$q$-island of $\Tbf$ has only finitely many vertices and thus is itself a $q$-isolated core of $\Tbf$. 
\end{corollary}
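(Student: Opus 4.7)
The plan is to reduce the probabilistic statement to the pointwise deterministic content of Lemma~\ref{det-VolumeBound} via the essential-infimum characterisation of $\mathbf{i}_{\mkern2mu\Tbb}$ in \eqref{TTbounds}. First I would invoke this characterisation together with the hypothesis $q<\mathbf{i}_{\mkern2mu\Tbb}$ to extract a measurable null set $\mathcal{N}\subset\Tbb$ with $G[\mathcal{N}]=0$ such that $\mathbf{i}(\Tbf)\geq\mathbf{i}_{\mkern2mu\Tbb}>q$ for every $\Tbf\in\Tbb\setminus\mathcal{N}$. The strict separation $q<\mathbf{i}_{\mkern2mu\Tbb}$ is essential here: it upgrades the almost-sure inequality $\mathbf{i}(\Tbf)\geq\mathbf{i}_{\mkern2mu\Tbb}$, which is all the essinf delivers, to the strict bound $\mathbf{i}(\Tbf)>q$ required to feed into the deterministic lemma.

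Next I would fix such a $\Tbf\in\Tbb\setminus\mathcal{N}$ and apply Lemma~\ref{det-VolumeBound} pointwise. Since that lemma is a purely deterministic assertion about rooted trees $\Tbf$ with $\mathbf{i}(\Tbf)>q$, it already contains both conclusions of the corollary, namely that every $q$-island of $\Tbf$ has finitely many vertices and, as a consequence, is itself a $q$-isolated core of $\Tbf$. (The second assertion is the one that relies on the finiteness together with the basic submodularity of $|\partial(\cdot)|$ relative to $|\cdot|$ encoded in Remark~\ref{core-rem}; this is precisely the point where one uses that a connected union of overlapping $q$-isolated cores remains a $q$-isolated core once the ambient set is finite.) Transferring this conclusion to all $\Tbf\in\Tbb\setminus\mathcal{N}$ yields the corollary.

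I do not anticipate a real obstacle at this stage: the genuinely non-trivial inputs---the positivity of $\mathbf{i}_{\mkern2mu\Tbb}$ established in Lemma~\ref{hdef}, which guarantees that the range $q\in\,]0,\mathbf{i}_{\mkern2mu\Tbb}[\,$ is non-empty and meaningful, and the deterministic Lemma~\ref{det-VolumeBound}---have already been carried out. The remaining argument is essentially bookkeeping, converting the essinf definition of $\mathbf{i}_{\mkern2mu\Tbb}$ into a pointwise statement valid on a set of full $G$-measure, so the ``directly imply'' tag attached to the corollary is justified.
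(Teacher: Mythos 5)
Your proposal is correct and takes exactly the route the paper intends when it says the two referenced results ``directly imply'' the corollary: the essinf definition \eqref{TTbounds} gives $\mathbf{i}(\Tbf)\geq\mathbf{i}_{\mkern2mu\Tbb}>q$ off a $G$-null set, and Lemma~\ref{det-VolumeBound} then applies pointwise to deliver both conclusions. One small misattribution in your parenthetical aside: the fact that a finite union of $q$-isolated cores is again a $q$-isolated core is Corollary~\ref{finite} (built on Lemma~\ref{DeltaBound}), not Remark~\ref{core-rem}, though this does not affect your argument since Lemma~\ref{det-VolumeBound} already contains that conclusion.
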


\noindent
In Vir\'ag's deterministic proof, 
\cite[Lemma 3.5]{virag2000bnddegree} is crucial. It is in this lemma where the dependence on the 
graph of the initial time $t(\Gbf)$ in the heat-kernel estimate \eqref{vir-est} originates from, 
thus impeding its use for an annealed bound. Moreover, this lemma is also one of the instances where 
boundedness of the vertex degrees plays a role. To avoid these shortcomings we pursue a probabilistic 
approach by excluding a third event of bad trees. 
This is the content of the next lemma whose formulation requires  to introduce several more notions. 

Let $\Tbf\in\Tbb$ and $q>0$ be fixed.  
A \emph{bridge structure} interconnecting a vertex set $S\subset\Tbf$ is a set of vertices 
$B\subset\Tbf$ such that $B\cup S$ is a connected set. 
A \emph{bridge} connecting two vertex sets $S_1,S_2 \subset\Tbf$ is a vertex set $B\subset\Tbf$ 
such that $B\cup S_1\cup S_2$ has a connected component intersecting both $S_1$ and $S_2$. 
We define the $q$-\emph{length} of a bridge $B \subset\Tbf$ by 
\begin{equation}
	q\text{-length}(B) \coloneqq |B\setminus A_q|,
\end{equation}
that is, the number of vertices of $B$ belonging to the $q$-oceans of $\Tbf$. 
Given a vertex set $S\subset\Tbf$ and a vertex $v\in\Tbf$, we define their \emph{$q$-distance} by
\begin{equation}
	\dist_q(v,S) \coloneqq \LEFTRIGHT\{.{
		\begin{array}{@{\,}lc}  0\,, & v \in S, \\[.3ex]
			 \displaystyle 1+ \; \min_{\smash{\substack{\\[3ex] \text{bridges } B\subset\Tbf \\[.1ex] 
			 	\clap{\scriptsize$\text{ connecting } \{v\} \text{ and } S$}}}}
				\;\big\{q\text{-length}(B)\big\}, &  v\not\in S.  \end{array} }\rule[-5.5ex]{0pt}{3ex}
\end{equation}
As noted before, $q$-islands pose a problem for obtaining heat-kernel bounds. Given $t\in\Nbb$, 
the event 
\begin{align}
	\label{defHt}
		H_{q,t}^{0} \coloneqq  \Bigl\{\Tbf  \in\Tbb:\; 
		& \exists \text{ a finite union of }q\text{-islands }U_{q,t}=U_{q,t}(\Tbf)\subseteq\Tbf\;  	
			\text{with} \notag \\
		& {2^{\frac56}t^\frac{1}{3}} \leq q \vv{U_{q,t}}<\infty \text{ and } \exists 
			\text{ a bridge structure }B_{q,t} \notag \\
		& \text{interconnecting } \{o\}\cup U_{q,t} \text{ with }  \max_{v\in B_{q,t}}\dist(o,v)
			\leq t\Bigr\}
\end{align}
describes trees where these islands are too dominant and situated too close to the root, that is, 
reachable for the random walk in $t$ steps. The next lemma allows to exclude the particularly bad situation, where these islands are too close together and too close to the root with respect to the $q$-length. However, such control is only possible with a restriction on the growth in the relevant part of the tree.

\begin{lemma}
	\label{badlargeislands}
	For $t\in\Nbb$ and $z_t \in \Nbb \setminus \{1\}$ let 
	\begin{equation}
		\label{Mt-def}
		M_{t,z_{t}}  \coloneqq  \big\{\Tbf\in\Tbb:\; Z_{\Tbf}(x)\leq z_t-1 \;\forall x\in\mathbf{T}_{ot}\big\}
	\end{equation}
	be the event of trees whose numbers of offspring are bounded by the same constant $z_t-1$ for every vertex 
	up to generation $t$. Furthermore, we set 
	\begin{equation}
		\label{qdef}
		q \coloneqq \frac23 h,
	\end{equation}
	where $h$ is given by Remark~\ref{rem:h-small}, and define the subset 
	\begin{equation}\label{territoryineq}
		H_{t} \coloneqq H_{q,t,z_{t}} \coloneqq  \Bigl\{\Tbf  \in H_{q,t}^{0}:\; z_t 
			\frac{\big|(B_{q,t}\cup\{o\})\setminus A_q\big|}{\vv{U_{q,t}}}\leq\frac{h}{3} \Bigr\}
	\end{equation} 
	of trees from $H_{q,t}^{0}$, for which there exists a small (w.r.t.\ the $q$-length) bridge 
	structure connecting the bad $q$-islands with the root and among each other. 
	Then, we have  
	\begin{equation} 
		\label{estHt}
		G[M_{t,z_{t}}\cap H_t]\leq \exp(-c_{5} t^\frac13),
	\end{equation} 
	where $c_{5}>0$ is the constant from Lemma~\ref{lem:hdef}.
\end{lemma}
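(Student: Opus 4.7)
My plan is to reduce the estimate to Lemma~\ref{lem:hdef} by constructing, on the event $M_{t,z_{t}} \cap H_{t}$, a finite connected subgraph $K \subset \mathbf{T}$ with $o \in K$, $\vv{K} \geq t^{1/3}$, and $\ev{\partial K}/\vv{K} \leq h$. This inclusion gives $M_{t,z_{t}} \cap H_{t} \subseteq D_{\lceil t^{1/3}\rceil}$, and Lemma~\ref{lem:hdef} then delivers the bound $\exp(-c_{5} t^{1/3})$.

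Given $\mathbf{T} \in M_{t,z_{t}} \cap H_{t}$, I pick $U_{q,t}$ and $B_{q,t}$ as provided by $H_{q,t}^{0}$, and set $K$ to be the union of $U_{q,t}$, the bridge structure $B_{q,t}$, the root $o$, and every $q$-island that meets $B_{q,t} \cup \{o\}$. Corollary~\ref{VolumeBound} guarantees $G$-almost surely that the added islands are finite, so $K$ is finite; it is connected because the bridge-structure property already makes $U_{q,t} \cup B_{q,t} \cup \{o\}$ connected and every additional island attaches to this set by construction. With $q = 2h/3$ and $h < 1$ (Remark~\ref{rem:h-small}), the defining inequality $q\vv{U_{q,t}} \geq 2^{5/6} t^{1/3}$ yields $\vv{K} \geq \vv{U_{q,t}} \geq (3 \cdot 2^{-1/6}/h)\, t^{1/3} > t^{1/3}$, so $\vv{K} \geq \lceil t^{1/3}\rceil$.

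For the boundary, I split $K = (K \cap A_{q}) \sqcup (K \setminus A_{q})$. By construction $K \cap A_{q}$ is a disjoint union of entire $q$-islands; since distinct islands are non-adjacent connected components of $A_{q}$, every edge leaving $K \cap A_{q}$ sits in $\partial I$ for a unique island $I \subseteq K \cap A_{q}$, and each such island, being a $q$-isolated core by Corollary~\ref{VolumeBound} (hence $q$-isolated by Remark~\ref{core-rem}(\ref{core-isolated})), satisfies $\ev{\partial I} < q \vv{I}$; summing over islands gives at most $q \vv{K \cap A_{q}}$ boundary edges out of $K \cap A_{q}$. The remaining piece obeys $K \setminus A_{q} = (B_{q,t} \cup \{o\}) \setminus A_{q}$, which is contained in $\mathbf{T}_{ot}$ by the distance bound of $H_{q,t}^{0}$; on $M_{t,z_{t}}$ every vertex of $\mathbf{T}_{ot}$ has degree at most $z_{t}$, contributing at most $z_{t} \vv{K \setminus A_{q}}$ further boundary edges. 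The defining inequality of $H_{t}$ in \eqref{territoryineq} gives $z_{t} \vv{K \setminus A_{q}} \leq (h/3) \vv{U_{q,t}} \leq (h/3) \vv{K}$, whence
\begin{equation*}
	\frac{\ev{\partial K}}{\vv{K}} \leq q + \frac{z_{t} \vv{K \setminus A_{q}}}{\vv{K}} \leq \frac{2h}{3} + \frac{h}{3} = h,
\end{equation*}
so $\mathbf{T} \in A(h, \vv{K}) \subseteq D_{\lceil t^{1/3}\rceil}$, as required.

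The delicate step is the boundary decomposition: enlarging $K$ to contain whole $q$-islands is what makes the local isoperimetric bound $\ev{\partial I} < q\vv{I}$ applicable island-by-island, and it is simultaneously what forces the residue $K \setminus A_{q}$ to coincide with $(B_{q,t} \cup \{o\}) \setminus A_{q}$, on which the defining inequality of $H_{t}$ can be applied directly. The choice $q = 2h/3$ and the factor $z_{t}$ inside the defining inequality of $H_{t}$ are tuned so that the two contributions to the expansion ratio add to exactly $h$, which is the threshold that unlocks Lemma~\ref{lem:hdef}.
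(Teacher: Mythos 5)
Your proof is correct and follows essentially the same route as the paper: both build the connected set $K$ from the ocean part of the bridge structure plus the root together with all adjacent $q$-islands, bound its isoperimetric ratio by $q+\tfrac{h}{3}=h$ using the island isolation and the defining inequality of $H_t$, and conclude via Lemma~\ref{lem:hdef}. The only (harmless) cosmetic differences are that you bound the island boundary island-by-island instead of treating their union as a single $q$-isolated core via Corollary~\ref{finite}, and you replace the paper's monotonicity argument for $a\mapsto(q+z_ta)/(1+a)$ by a direct two-term estimate.
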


\begin{proof}
	We fix $t\in\Nbb$ and $q \coloneqq \frac23 h < \frac23$. 
	Let $\Tbf\in M_{t,z_{t}}\cap H_t\subseteq M_{t,z_{t}}\cap H_{q,t}^{0}$. Let $A$ be the union of all $q$-islands of 
	$\Tbf$ intersecting $\{o\}\cup B_{q,t}\cup U_{q,t}$. Thus, $U_{q,t}\subseteq A\subseteq A_q$ 
	and $A$ is itself a $q$-isolated core so that  $\frac{\ev{\partial A}}{\vv{A}}<q$. 
	We define the part 
	\begin{equation}
		S \coloneqq  (\{o\}\cup B_{q,t})\setminus A_q
		= (\{o\}\cup B_{q,t}\cup U_{q,t}) \setminus A_q
		= (\{o\}\cup B_{q,t}\cup U_{q,t}) \setminus A		
	\end{equation}
	of the bridge structure and the root not belonging to any $q$-island. 

	Since we assume $\Tbf\in H_t$, the definition \eqref{territoryineq} implies that
	\begin{equation}
		\label{territoryineq1}
		z_t\frac{\vv{S}}{\vv{A}}\leq z_t\frac{\vv{S}}{\vv{U_{q,t}}}
		\leq\frac{h}{3}.
	\end{equation} 
	Furthermore, we conclude from $A\cap S=\emptyset$ that
	\begin{equation}
		\label{ineq2}
		\frac{\ev{\partial(A\cup S)}}{|A\cup S|}\leq\frac{\ev{\partial A} +\ev{\partial S}}{\vv{A} 
			+ \vv{S}}
		\leq  \frac{q+z_t\frac{\vv{S}}{\vv{A}}}{1+\frac{\vv{S}}{\vv{A}}}.
	\end{equation}
	For the last inequality we used $\ev{\partial A}< q\vv{A}$ and $\ev{\partial S}\leq z_t\vv{S}$, 
	which follows from $\Tbf\in M_{t,z_{t}}$ and that the bridge structure has maximal graph distance $t$ 
	to the root.

	For $0<q<1< z_{t}$, the elementary estimate 
	\begin{equation}
		\label{ineq3}
 		\frac{q+z_ta}{1+a}\leq\frac{q+z_tb}{1+b}   
	\end{equation}
	holds for any $0\leq a\leq b$. 
	The inequality \eqref{territoryineq1} allows to apply \eqref{ineq3} to \eqref{ineq2} 
	with $a=\frac{\vv{S}}{\vv{A}}$ and $b=\frac{h}{3 z_t}$, yielding
	\begin{equation}
		\label{est}
		\frac{\ev{\partial(A\cup S)}}{|A\cup S|}\leq\frac{q+\frac{h}{3}}{1+\frac{h}{3z_t}}<h
	\end{equation}
	by the definition \eqref{qdef} of $q$.
	To summarise we note that
	\begin{equation}
		K\coloneqq A\cup S = A \cup \{o\}\cup B_{q,t}\cup U_{q,t}
	\end{equation}
 	contains the root $o$, is connected, has finite 
	volume $|K| \ge |U_{q,t}| \ge 2^{\frac56}t^{\frac13}/q$ and satisfies \eqref{est}. Hence,  
	$\Tbf\in D_{\lfloor 2^{\frac56}t^{\frac13}/q \rfloor}$ and the claim follows from 
	\eqref{D_t-bounds} and $\lfloor 2^{\frac56}t^{\frac13}/q \rfloor \ge t^{\frac13}$ for $t\in\Nbb$.
\end{proof}

%%%%%%%%%%%%%%%%%%%%%%%%%%%%%%%%%%%%%%%%%%%%%%%%%%%%%%
%%%%%%%%%%%%%%%%%%%%%%%%%%%%%%%%%%%%%%%%%%%%%%%%%%%%%%

\section{Markov-kernel estimates for an effective random walk}

It is well known, see e.g.\ \cite{DiaconisStroock.1991.GeometricBounds, MR1743100, peres2016prob},  
that (non-anchored or global) iso\-perimetric-type inequalities on a graph imply 
bounds on the Markov and heat kernel for a random walk defined on this graph. 
And it is the $q$-oceans which give rise to such isoperimetric ratios that are bounded away from zero.
Therefore it will be the goal to derive the desired bound on the 
return probability from the properties of the random walk on the 
$q$-oceans. The latter are not connected in general, however. Therefore we will follow ideas of  
\cite[Sect.\ 3]{virag2000bnddegree} and construct an effective (a.k.a.\ induced)
Markov chain on the $q$-oceans which can jump over the $q$-islands. This Markov chain turns out to be 
a random walk on a connected weighted graph, which we introduce next.
%The results in this section require global growth restrictions on 
%the tree that need to be dealt with in the following section. 
%For this reason it is important that the dependence on the maximal vertex degree is explicitly tracked. 

We consider an infinite graph $\mathbf{G}$ together with a symmetric weight function
$w\colon \mathbf{G}\times \mathbf{G}\to [0,\infty[\,$, i.e., $w(x,y)=w(y,x)$ for all 
$x,y\in\mathbf{G}$. We assume finite vertex weights
\begin{equation}
	\label{defw}
	w(x) \coloneqq  \sum_{y\in\mathbf{G}}w(x,y)<\infty
\end{equation}
for every $x\in \mathbf{G}$ and that any two vertices $x,y\in\Gbf$, $x\neq y$, are connected by a path 
with strictly positive weights, that is, there exists $N\in\Nbb$ and $x_{n} \in \Gbf$, 
$n\in \{0,1, \ldots, N\}$, with $x_{0}= x$, $x_{N}=y$ and 
\begin{equation}
	\label{Gconn}
	\prod_{n=1}^{N} w(x_{n-1},x_{n}) >0.
\end{equation} 
The pair $(\mathbf{G},w)$ is called a \emph{weighted graph} and referred to as \emph{connected} 
due to \eqref{Gconn}.

For $\emptyset\neq S\subseteq \mathbf{G}$ finite, we define its $w$-weighted volume by
\begin{equation}
	\label{wvol}
	|S|_{w} \coloneqq   \sum_{x\in S}w(x)
\end{equation}
and its $w$-weighted size of the edge boundary by 
\begin{equation}
	\label{wbdy}
	|\partial S|_{w}  \coloneqq   \sum_{x\in S,\; y\in S^c}w(x,y).
\end{equation}
Following \cite[Sect.~6.1]{peres2016prob}, we introduce
\begin{definition}
	The \emph{edge-isoperimetric constant} of the weighted graph $(\mathbf{G},w)$ is given by
	\begin{equation}\label{qw}
		Q_w \coloneqq   \inf\Bigl\{ \frac{|\partial S|_{w}}{|S|_{w}}:\;\emptyset\neq S\subset \mathbf{G}
			\text{ finite} \Bigr\} \leq 1.
	\end{equation}
\end{definition}

\begin{remark}
	The edge-isoperimetric constant \eqref{qw} resembles the definition of the anchored expansion 
	constant \eqref{T-exp-const} but without the 
	anchor and for weighted graphs. However, in the absence of the anchor, the edge-isoperimetric 
	constant is typically zero for the realisations of a Galton--Watson tree when 
	equipped with the 
	canonical weights \eqref{can-weights} below.
\end{remark}

So far we have dealt with the simple random walk on unweighted (tree) graphs. Next, we generalise the notion to weighted graphs. The two notions will be related in the paragraph after Theorem~\ref{MarkovKernel}.
The \emph{standard} random walk on the weighted graph $(\mathbf{G},w)$ is defined by its 
transition probabilities $p(x,y): = \frac{w(x,y)}{w(x)}$ for moving from $x\in \mathbf{G}$ to 
$y\in \mathbf{G}$ within one time step. Furthermore, the 
corresponding symmetric Markov kernel $\mathbf{P}_{\mathbf{G},w}$ on the weighted real Hilbert space 
$\ell^2(\mathbf{G},w)$ is given by 
$(\mathbf{P}_{\mathbf{G},w}\psi)(x)  \coloneqq  \sum_{y\in\mathbf{G}}p(x,y)\psi(y)$ for every 
$\psi\in\ell^2(\mathbf{G},w)$ and every $x\in\mathbf{G}$. 
The weighted Hilbert space $\ell^2(\mathbf{G},w)$ is equipped with the $w$-weighted inner product 
$\langle\mkern2mu\pmb\cdot\mkern2mu , \mkern2mu\pmb\cdot\mkern2mu\rangle_{\mathbf{G},w}$, defined by 
$\langle\psi , \phi\rangle_{\mathbf{G},w} \coloneqq \sum_{x\in\mathbf{G}} w(x)\psi(x)\phi(x)$ for every 
$\psi,\phi\in \ell^2(\mathbf{G},w)$. 
Then we have $p(x,y) =\langle{1}_{\{x\}}, \mathbf{P}_{\mathbf{G},w} {1}_{\{y\}}\rangle_{\mathbf{G}}$
for every $x,y\in\Gbf$, where ${1}_{S}$ is the indicator 
function of a vertex subset $S\subseteq\mathbf{G}$ and 
$\langle\mkern2mu\pmb\cdot\mkern2mu , \mkern2mu\pmb\cdot\mkern2mu\rangle_{\mathbf{G}}$ 
is the unweighted $\ell^2$-inner product given by 
$\langle\psi , \phi\rangle_{\mathbf{G}}  \coloneqq \sum_{x\in\mathbf{G}}\psi(x)\phi(x)$ for every 
$\psi,\phi\in \ell^2(\mathbf{G})$. 

We recall that if the operator norm of the Markov kernel $\mathbf{P}_{\mathbf{G},w}$ is strictly
smaller than $1$, then this implies an upper bound on the heat kernel of the standard random walk 
which decays exponentially in time \cite[Prop.\ 6.6]{peres2016prob}. The next theorem, 
which we quote without proof, relates this criterion to a positive edge-isoperimetric constant.

\begin{theorem}[\protect{\cite[Thm.~6.7]{peres2016prob}}]
	\label{MarkovKernel}
	We consider the standard random walk on the connected infinite weighted graph $(\mathbf{G},w)$ 
	with edge-isoperimetric constant $Q_w$. 
	Then, its Markov kernel $\mathbf{P}_{\mathbf{G},w}$ fulfils
	\begin{equation}
		\label{markkbnd}
		\|\mathbf{P}_{\mathbf{G},w}\|_{\mathbf{G},w}\leq \sqrt{1-Q_w^2}\leq 1-\frac{Q_w^2}{2},
	\end{equation}
	where $\|\pmb\cdot\|_{\mathbf{G},w}$ denotes the operator norm on the Banach space of bounded 
	linear operators on $\ell^2(\mathbf{G},w)$.
\end{theorem}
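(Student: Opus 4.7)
\emph{Approach.} Symmetry of $w$ implies the detailed-balance relation $w(x)\,p(x,y)=w(x,y)=w(y)\,p(y,x)$, so $\mathbf{P}_{\mathbf{G},w}$ is self-adjoint on $\ell^{2}(\mathbf{G},w)$. Applying Jensen's inequality to the probability kernel $p(x,\pmb\cdot)$ yields $\|\mathbf{P}_{\mathbf{G},w}\psi\|_{\mathbf{G},w}\leq\|\psi\|_{\mathbf{G},w}$ for every $\psi\in\ell^{2}(\mathbf{G},w)$, so the spectrum of $\mathbf{P}_{\mathbf{G},w}$ lies inside $[-1,1]$. Since the operator is self-adjoint, its operator norm equals the numerical radius, and the proof of \eqref{markkbnd} reduces to establishing the quadratic-form bound
\begin{equation*}
\langle\psi,\mathbf{P}_{\mathbf{G},w}\psi\rangle_{\mathbf{G},w}^{2}\leq(1-Q_{w}^{2})\,\|\psi\|_{\mathbf{G},w}^{4}
\end{equation*}
for all $\psi\in\ell^{2}(\mathbf{G},w)$; the secondary estimate $\sqrt{1-Q_{w}^{2}}\leq 1-Q_{w}^{2}/2$ is then just the elementary inequality $\sqrt{1-x}\leq 1-x/2$ on $[0,1]$.

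\emph{Key steps.} I would run a Cheeger-type argument on the dense subspace of finitely supported $\psi\in\ell^{2}(\mathbf{G},w)$ and extend by density afterwards. A direct computation using the symmetry of $w$ rewrites the two relevant spectral quantities as edge sums
\begin{equation*}
\langle\psi,(I\mp\mathbf{P}_{\mathbf{G},w})\psi\rangle_{\mathbf{G},w}=\tfrac{1}{2}\sum_{x,y\in\mathbf{G}}w(x,y)\,\bigl(\psi(x)\mp\psi(y)\bigr)^{2}.
\end{equation*}
The layer-cake representation
\begin{equation*}
\tfrac{1}{2}\sum_{x,y\in\mathbf{G}}w(x,y)\,\bigl|\psi(x)^{2}-\psi(y)^{2}\bigr|=\int_{0}^{\infty}\bigl|\partial\{\psi^{2}>t\}\bigr|_{w}\,\d t,
\end{equation*}
combined with the pointwise isoperimetric bound $|\partial S|_{w}\geq Q_{w}\,|S|_{w}$ applied under the integral and the layer-cake identity $\int_{0}^{\infty}|\{\psi^{2}>t\}|_{w}\,\d t=\|\psi\|_{\mathbf{G},w}^{2}$, then yields
\begin{equation*}
Q_{w}\,\|\psi\|_{\mathbf{G},w}^{2}\leq\tfrac{1}{2}\sum_{x,y\in\mathbf{G}}w(x,y)\,|\psi(x)-\psi(y)|\cdot|\psi(x)+\psi(y)|.
\end{equation*}
A weighted Cauchy--Schwarz split majorises the right-hand side by $\sqrt{\langle\psi,(I-\mathbf{P}_{\mathbf{G},w})\psi\rangle_{\mathbf{G},w}\cdot\langle\psi,(I+\mathbf{P}_{\mathbf{G},w})\psi\rangle_{\mathbf{G},w}}$, which in turn equals $\sqrt{\|\psi\|_{\mathbf{G},w}^{4}-\langle\psi,\mathbf{P}_{\mathbf{G},w}\psi\rangle_{\mathbf{G},w}^{2}}$ via the identity $(b-a)(b+a)=b^{2}-a^{2}$ with $b=\|\psi\|_{\mathbf{G},w}^{2}$ and $a=\langle\psi,\mathbf{P}_{\mathbf{G},w}\psi\rangle_{\mathbf{G},w}$. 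Squaring and rearranging produces the desired quadratic-form bound.

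\emph{Main obstacle.} The most delicate point is legalising the pointwise use of the isoperimetric inequality $|\partial S|_{w}\geq Q_{w}\,|S|_{w}$ on the super-level sets $S=\{\psi^{2}>t\}$, because $Q_{w}$ in \eqref{qw} is defined only for subsets with \emph{finitely many} vertices, whereas a generic $\psi\in\ell^{2}(\mathbf{G},w)$ need not have finite super-level sets unless the vertex weights $w(x)$ are bounded away from zero. My way around this is to establish the quadratic-form estimate first for finitely supported $\psi$—for which every super-level set is automatically a finite vertex set—and then to extend it to all of $\ell^{2}(\mathbf{G},w)$ by density, using the a priori bound $\|\mathbf{P}_{\mathbf{G},w}\|_{\mathbf{G},w}\leq 1$ and continuity of the quadratic form.
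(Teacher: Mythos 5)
The paper quotes Theorem~\ref{MarkovKernel} without proof, attributing it to \cite[Thm.~6.7]{peres2016prob}, so there is no internal argument to compare against. Your proposal is a correct rendering of the standard Cheeger-type argument that underlies that reference: self-adjointness plus $\|\mathbf{P}_{\mathbf{G},w}\|\le 1$ reduce the norm bound to the quadratic-form estimate, the co-area (layer-cake) identity converts the isoperimetric constant into the gradient term, and the weighted Cauchy--Schwarz split with $(b-a)(b+a)=b^2-a^2$ closes the argument; your restriction to finitely supported $\psi$ followed by density correctly handles the requirement that $Q_w$ be tested only on finite sets.
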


In the rest of this section, we consider an infinite rooted tree $\Tbf$ 
for which every vertex $x\in\Tbf$ has a finite vertex 
degree $\deg(x) \coloneqq \deg_{\Tbf}(x) < \infty$. The simple random walk 
$\{X_t\}_{t\in\Nbb_0}$ on $\Tbf$ coincides with the standard random walk on the weighted 
graph $(\Tbf, w_{\mathrm{srw}})$ with canonical edge weights equal to 
\begin{equation}
	\label{can-weights}
	w_{\mathrm{srw}}(x,y) \coloneqq \LEFTRIGHT\{.{\begin{array}{@{\mkern3mu}ll} 1, & \text{if $\{x,y\}$ is an edge of }
			\Tbf, \\ 0, & \text{otherwise}.  \end{array}}
\end{equation}
Thus, the corresponding vertex weight is given by $w_{\mathrm{srw}}(x) \coloneqq \deg(x)$ for every $x\in\Tbf$ 
according to \eqref{defw}. 
We write $\mathbf{P}\coloneqq \mathbf{P}_{\Tbf, w_{\mathrm{srw}}}$ for the associated Markov operator on $\ell^{2}(\Tbf)$, 
which is symmetric on $\ell^{2}(\Tbf, w_{\mathrm{srw}})$.

In Lemma~\ref{wic} we will establish that the $q$-oceans 
\begin{equation}
	\Tbf_q \coloneqq \Tbf\setminus A_{q}
\end{equation}
exhibit a non-anchored isoperimetric inequality.
Therefore we would like to apply 
Theorem~\ref{MarkovKernel} to the standard random walk on the weighted 
graph $(\Tbf_{q}, w_{\mathrm{srw}})$. But the theorem requires a connected weighted graph to 
yield a non-trivial result. Therefore we will construct a weight function $w_{q}$ which ensures 
that $(\Tbf_q,w_{q})$ is connected and 
such that the standard random walk $\{W_{t}\}_{t\in\Nbb_{0}}$
on $(\Tbf_q,w_{q})$ behaves like the simple random walk $\{X_{t}\}_{t\in\Nbb_{0}}$ on $\Tbf$ 
if the latter is only observed on the $q$-oceans $\Tbf_{q}$. 
This standard random walk $\{W_{t}\}_{t\in\Nbb_{0}}$
on $(\Tbf_q,w_{q})$ is often referred to as the induced Markov chain of $\{X_t\}_{t\in\Nbb_0}$ on 
$\Tbf_q$ and is specified in

\begin{definition}
	\label{def:Tq}
	Let $q >0$.
	\begin{enumerate}[\upshape(i)]
	\item
		We write 
		\begin{equation}
			\tau_{S} \coloneqq \inf\big\{t \in\Nbb: X_{t}\in S\big\}  \in \Nbb \cup \{\infty\}
		\end{equation}
		for the \emph{first hitting time} after zero of a vertex subset
		$S \subseteq\Tbf$ by the simple random walk $\{X_t\}_{t\in\Nbb_0}$ on $\Tbf$. We also introduce the 
		abbreviation $\tau_{\mathrm{oc}}\coloneqq \tau_{\Tbf_{q}}$ for the first hitting time of the 
		$q$-oceans. 
	\item
		The edge weights of the weighted graph $(\Tbf_q,w_{q})$
		are given by
		\begin{equation}
			\label{wq}
			w_q(x,y) \coloneqq  w_{\mathrm{srw}}(x)P^{\Tbf}_x[X_{\tau_{\mathrm{oc}}}=y]
		\end{equation}
		for all vertices $x,y\in\Tbf_q$. Accordingly, $P^{\Tbf}_x[X_{\tau_{\mathrm{oc}}}=y]$ is the 
		probability that the simple random walk on $\Tbf$ ends up at $y$ 
		when it first touches the oceans $\Tbf_q$ again after having left its starting point $x$.
	\item
		We write $P^{\Tbf_{q}}_{x}$ for the probability measure of the standard random walk 
		$\{W_{t}\}_{t\in\Nbb_{0}}$ 
		on $(\Tbf_q,w_{q})$, which starts at $x\in\Tbf_{q}$. 
		We use the symbol $\mathbf{P}_{q} \coloneqq \mathbf{P}_{\Tbf_{q}, w_{q}}$ for the associated Markov operator 
		on $\ell^{2}(\Tbf_{q})$, which is symmetric on $\ell^{2}(\Tbf_{q},w_{q})$.
	\end{enumerate}
\end{definition}

\noindent
The following properties hold.

\begin{remark}
\begin{enumerate}[(i)]
\item
	For every $x,y\in \Tbf_q$ we have 
	\begin{equation}
		\label{wqsym}
		w_{q}(x,y) = w_{q}(y,x), 
	\end{equation}
	that is, $w_{q}$ is symmetric and, 
	hence, it is indeed an edge-weight function. This follows from time reversibility of the simple 
	random walk on $\Tbf$: Consider a path 
	$X_{0}=x, X_{1}=x_{1}, \ldots, X_{n}=x_{n}, X_{n+1}=y$ along the edges in $\Tbf$, where $n\in\Nbb_{0}$ and $x_{j}\in A_{q}$ for
	$j=1,\ldots,n$, which contributes to the probability 
	$P^{\Tbf}_x[X_{\tau_{\mathrm{oc}}}=y]$. This path has probability 
	$\frac{1}{w_{\mathrm{srw}}(x)} \prod_{j=1}^{n} \frac{1}{w_{\mathrm{srw}}(x_{j})}$. 
	It corresponds uniquely to a time-reversed path 
	$X_{0}=y, X_{1}=x_{n}, \ldots, X_{n}=x_{1}, X_{n+1}=x$ contributing to
	$P^{\Tbf}_y[X_{\tau_{\mathrm{oc}}}=x]$ with probability 
	$\frac{1}{w_{\mathrm{srw}}(y)} \prod_{j=1}^{n} \frac{1}{w_{\mathrm{srw}}(x_{n-j+1})}$. 
	The same holds vice versa and proves \eqref{wqsym}.
\item 
	\label{ocean-weight}
	For every $x,y\in \Tbf_q$ we have 
	\begin{equation}
		\label{wqedge}
		w_q(x,y) \ge w_{\mathrm{srw}}(x,y),
	\end{equation}
	where strict inequality can only occur if both $x$ and $y$ belong to the outer vertex boundary 
	$\partial_{\mathrm{out}}C \coloneqq \{\wtilde{x} \in \Tbf: \dist(\wtilde{x}, C)=1\}$ of 
	the same $q$-island $C\in A_{q}$. 
	Indeed, if there is no edge in $\Tbf$ between $x$ and $y$, then $w_{\mathrm{srw}}(x,y)=0$,
	and the inequality 
	is trivial. If there is an edge in $\Tbf$ between $x$ and $y$ then there exists a one-step path from $x$ to $y$ 
	with $\tau_{\mathrm{oc}}=1$ and $w_{\mathrm{srw}}(x)P^{\Tbf}_x[X_{1}=y] =1$. 
	If both $x,y \in \partial_{\mathrm{out}}C$
	there exists a path from $x$ to $y$ lying entirely in the $q$-island $C$ except for the two 
	endpoints $x$ and $y$. In this case, 
	$P^{\Tbf}_x[X_{\tau_{\mathrm{oc}}}=y \text{ and } \tau_{\mathrm{oc}} >1] >0$ gives rise to a 
	positive contribution to $w_{q}(x,y)$ whereas $w_{\mathrm{srw}}(x,y)=0$ because $\Tbf$ is a tree. 
\item
	Let $C\in A_{q}$ be a $q$-island and assume that $x\in\partial_{\mathrm{out}}C$. Then
	\begin{equation}
		w_{q}(x,x) >0.
	\end{equation}
	If $x\in \Tbf_{q}$ is not adjacent in $\Tbf$ to any $q$-island, then $w_{q}(x,x) =0$, 
	as follows from \eqref{wq}.
	In other words, the standard random walk on the weighted graph has a non-zero probability to stay at a vertex $x$ instead of jumping to another vertex $y\neq x$, but this may happen only at vertices $x$ that are adjacent  in $\Tbf$ to some $q$-island.
\item 	
	For every $x\in\Tbf_q$ we have  $\sum_{y\in\Tbf_q} P_x[X_{\tau_{\mathrm{oc}}}=y]=1$ and thus
	\begin{equation}
		\label{wqvertex}
		w_q(x) \coloneqq \sum_{y\in\Tbf_q} w_q(x,y) = w_{\mathrm{srw}}(x).
	\end{equation}
\item 
	We claim that the probability for an arbitrary vertex $y \in\Tbf_{q}$ to be reachable  
	by the simple random walk $\{X_{t}\}_{t_{\in\Nbb_{0}}}$ on $\Tbf$, which starts at $x\in\Tbf_{q}$,
	is the same as for the 
	standard random walk $\{W_{t}\}_{t_{\in\Nbb_{0}}}$ on $(\Tbf_{q},w_{q})$, that is,
	\begin{equation}
		\label{srw=q}
		P^{\Tbf}_{x}[\exists\, t\in\Nbb: X_{t} =y] = P^{\Tbf_{q}}_{x}[\exists\, t\in\Nbb: W_{t} =y].
	\end{equation}
	Before we prove \eqref{srw=q} we note that it immediately implies
	\begin{equation}
		\label{srw=q0}
		P^{\Tbf}_{x}[\exists\, t\in\Nbb_{0}: X_{t} =y] 
		= P^{\Tbf_{q}}_{x}[\exists\, t\in\Nbb_{0}: W_{t} =y].
	\end{equation}
	To prove \eqref{srw=q} let $\{\sigma_{n}\}_{n\in\Nbb_{0}}$ be 
	the strictly increasing sequence of stopping times which are uniquely defined by $\sigma_{0}\coloneqq0$, 
	$\sigma_{n} \coloneqq \sigma_{n-1}+1$ if both $X_{\sigma_{n-1}}, X_{\sigma_{n-1}+1} \in \Tbf_{q}$ 
	for $n\in\Nbb$
	and the property that $X_{t} \in A_{q}$ if and only if	$\sigma_{n-1} < t < \sigma_{n}$ 
	for some $n\in\Nbb$. 
	We infer that 
	\begin{align}
		\label{srw=q-left}		
		P_{x}^{\Tbf}\big[ \exists\, t\in\Nbb &: X_{t}=y\big] \notag \\
		& = P_{x}^{\Tbf}\big[ \exists\, t\in\Nbb: X_{\sigma_{t}}=y\big] \notag \\
		& = \sum_{t\in\Nbb} P_{x}^{\Tbf}\Big[ X_{\sigma_{t}}=y \text{ ~and~ } X_{\sigma_{n}} \in 
					\Tbf_{q}\setminus \{y\} \; \forall  n=1,\ldots, t-1 \Big]  \notag \\
		& = \sum_{t\in\Nbb} \; \sum_{\substack{y_{1},\ldots,y_{t-1} \\ \in \Tbf_{q}\setminus \{y\}}}
					\;\prod_{n=1}^{t} P_{y_{n-1}}^{\Tbf} \big[ X_{\sigma_{1}} = y_{n} \big],  
	\end{align}	
	where $y_{0}\coloneqq x$, $y_{t}\coloneqq y$ and we used the strong Markov property for the last equality. 
	Now $\sigma_{1}=\tau_{\mathrm{oc}}$ and the probability in the 
	last line of \eqref{srw=q-left} is equal to 
	\begin{equation}
		\label{srw=q-1-step}
		P_{y_{n-1}}^{\Tbf}[X_{\tau_{\mathrm{oc}}} = y_{n}] = \frac{w_{q}(y_{n-1},y_{n})}{w_{q}(y_{n-1})}
		= P_{y_{n-1}}^{\Tbf_{q}} [W_{1} = y_{n}],
	\end{equation}
	where the first equality follows from \eqref{wq} and \eqref{wqvertex}, and the second equality from the definition
	of the standard random walk. Inserting \eqref{srw=q-1-step} 
	into \eqref{srw=q-left} and using the Markov property for $\{W_{t}\}_{t\in\Nbb_{0}}$, we infer 
	\begin{equation}
		P_{x}^{\Tbf}\big[ \exists\, t\in\Nbb : X_{t}=y\big] 
		 = \sum_{t\in\Nbb} P_{x}^{\Tbf_{q}}\Big[ W_{t}=y \text{ ~and~ } W_{n} \in 
					\Tbf_{q}\setminus \{y\} \; \forall  n=1,\ldots, t-1 \Big]
	\end{equation}
	so that \eqref{srw=q} follows. 
\end{enumerate}
\end{remark}

The next lemma establishes that when viewing the $q$-oceans as the weighted graph $(\Tbf_q,w_q)$, they 
satisfy a non-anchored isoperimetric inequality.
%requires a growth condition on the tree that must hold 
%throughout the entire oceans
%in order to obtain the desired lower bound on the edge-isoperimetric constant. 
%Later on, when applying this lemma to bound the return probability to the root, this growth condition can be satisfied at no additional cost for trees in the event $M_{t,z_{t}}$ from \eqref{Mt-def}
%because the random walk cannot explore the parts of the tree at distance larger than $t$ to the root. 

\begin{lemma}
	\label{wic}
	Let $q \in \,]0,2]$ and let $\Tbf$ be a rooted tree with $\deg(x) < \infty$ for every $x\in\Tbf$. 
%	for which there exists $z\in \Nbb\setminus\{1\}$ 
%	such that $\deg(x)\leq z$ for every $x\in \mathbf{T} \setminus A_{q}$. 
	Then, the weighted graph $(\Tbf_q,w_q)$ has edge-isoperimetric constant 
	\begin{equation}\label{qwq}
		Q_{w_q}\geq \frac{q}{q+2}. %\frac{q}{z}.
	\end{equation}
\end{lemma}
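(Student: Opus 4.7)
The plan is to reduce the weighted isoperimetric inequality on the ocean subgraph $(\Tbf_q,w_q)$ to an unweighted isoperimetric inequality on a suitable subset of the full tree $\Tbf$, exploiting the very definition of the $q$-oceans. Fix a finite nonempty vertex subset $S\subseteq\Tbf_q$, for which the ratio $|\partial S|_{w_q}/|S|_{w_q}$ has to be bounded from below by $q/(q+2)$. My first observation is that $S$ cannot be $q$-isolated in $\Tbf$, i.e., $\Delta_qS\leq 0$. Indeed, if $\Delta_qS>0$ held, I could pick $C^{*}\subseteq S$ of minimum cardinality among maximizers of $\Delta_q$ over subsets of $S$; minimality would then force $\Delta_qC^{*}>\Delta_qA$ for every $A\subsetneq C^{*}$, so $C^{*}$ would be a nonempty $q$-isolated core of $\Tbf$ and hence $C^{*}\subseteq A_q$, contradicting $S\cap A_q=\emptyset$.

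From $\Delta_qS\leq 0$ I obtain the tree-level isoperimetric inequality $|\partial S|\geq q|S|$, where $|\partial S|$ denotes the edge boundary of $S$ in $\Tbf$. Since $\Tbf$ is a tree, the number of edges with both endpoints in $S$ is at most $|S|-1$, and the elementary identity $\sum_{x\in S}\deg(x)=2|E_{\Tbf}(S)|+|\partial S|$ together with $|S|\leq|\partial S|/q$ yields
\begin{equation*}
	|S|_{w_q}=\sum_{x\in S}\deg(x)\leq 2|S|+|\partial S|\leq\frac{q+2}{q}\,|\partial S|,
\end{equation*}
so that the tree-level ratio $|\partial S|/|S|_{w_q}$ is at least $q/(q+2)$.

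The main obstacle is to promote this bound from the tree boundary $|\partial S|$ to the induced-chain boundary $|\partial S|_{w_q}$: walks from $S$ that enter an adjacent $q$-island may return to $S$ before reaching $\Tbf_q\setminus S$, so in general $|\partial S|_{w_q}\leq|\partial S|$ and the bound above is not directly sufficient. The plan for closing this gap is to augment $S$ to $\hat{S}=S\cup\bigcup\mathcal{I}$, where $\mathcal{I}$ collects all $q$-islands $C$ with $\partial_{\mathrm{out}}C\subseteq S$; edges from $S$ to $\mathcal{I}$-islands contribute to $|\partial S|$ but vanish from both $|\partial S|_{w_q}$ (every walk through such an island returns to $S$) and $|\partial\hat{S}|$, so the two quantities should match on this part. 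Re-running the non-$q$-isolation argument with $\hat{S}$ in place of $S$, and using the defining bound $|\partial C|<q|C|$ on each $C\in\mathcal{I}$ to control the added volume $|\hat{S}|_{w_{\mathrm{srw}}}-|S|_{w_q}$, handles the interior islands. The remaining islands $C\notin\mathcal{I}$ adjacent to $S$ then require an island-by-island accounting via the harmonic-function / current identity on the finite subtree $C$, relating the effective flow $I_C$ through $C$ to its tree-boundary edges by means of the reversibility of the simple random walk on $\Tbf$; this delicate step, modelled on \cite[Sect.~3]{virag2000bnddegree}, is the technical heart of the argument.
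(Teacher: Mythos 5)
Your opening reduction is sound: the minimal-counterexample argument showing that $S$ cannot be $q$-isolated in $\Tbf$ is correct (it is the same device the paper uses, via Lemma~\ref{DeltaBound}), and the identity $|S|_{w_q}=\sum_{x\in S}\deg(x)=2|E_S|+|\partial^{\Tbf}S|$ is exactly the paper's first step. But the proof is not complete, and you say so yourself: the bound you actually establish is $|\partial^{\Tbf}S|\geq\frac{q}{q+2}|S|_{w_q}$, whereas the lemma requires the same bound for $|\partial S|_{w_q}$, and as you note one only has $|\partial S|_{w_q}\leq|\partial^{\Tbf}S|$ in general (a pendant $q$-island attached to $S$ by a single edge returns all of that edge's weight to $S$, so the edge is counted in $|\partial^{\Tbf}S|$ but contributes nothing to $|\partial S|_{w_q}$). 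The step you defer to a ``harmonic-function / current identity'' modelled on Vir\'ag is precisely the content of the lemma; it is left entirely unproven, and it is not clear that an island-by-island flow computation closes the gap, because the loss per island is not controlled by $|\partial C|<q|C|$ alone but by how $S$ sits relative to \emph{all} adjacent islands simultaneously.

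The paper resolves this without any potential theory, and the resolution is worth internalising because it shows your first observation was applied to the wrong set. One simply discards all island-crossing weight via $|\partial^{\Tbf_q}S|_{w_q}\geq|\partial^{\Tbf_q}S|=|\partial^{\Tbf}S|-|\partial^{\Tbf}S\cap\partial^{\Tbf}C|$, where $C$ is the union of \emph{all} $q$-islands adjacent to $S$ (not only those with $\partial_{\mathrm{out}}C\subseteq S$); by Corollary~\ref{finite}, $C$ is itself a $q$-isolated core. The splitting identity $\Delta_q(S\cup C)=\Delta_q S+\Delta_q C+2|\partial^{\Tbf}S\cap\partial^{\Tbf}C|$ then converts the desired inequality into $\Delta_q(S\cup C)\leq\Delta_q C$ (this is display \eqref{iso-bound} in the paper, where $q\leq 2$ is used). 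That last inequality follows by running your minimal-counterexample argument on $S\cup C$ rather than on $S$: since $S\cup C$ meets the oceans it is not a $q$-isolated core, the minimal witness $B$ is a $q$-isolated core and hence contained in $C$, and Lemma~\ref{DeltaBound} gives $\Delta_q B\leq\Delta_q C$. In short, the structural fact you need is the non-isolation of $S$ \emph{together with its adjacent islands}, which automatically absorbs the boundary edges lost to the islands; non-isolation of $S$ alone is strictly weaker and does not suffice.
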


\begin{proof} 
	Let $\emptyset\neq S \subseteq \Tbf_q$ be a finite vertex subset. First we will reduce weighted 
	(edge) volumes to unweighted ones. To this end, let $E_{S}$ be the edge set of the subgraph in $\Tbf$ induced by $S$, i.e.\ the set of edges in $\Tbf$ which connect only vertices within $S$. The relation  \eqref{wqvertex} implies the first equality in  
	\begin{equation}
		|S|_{w_{q}} 
		= |S|_{w_{\mathrm{srw}}} 
		= \sum_{x\in S} \deg_{T}(x) = 2 |E_{S}| + |\partial^{\Tbf}S|
		= 2(|S| -1) + |\partial^{\Tbf}S| ,
	\end{equation}
	and the last equality holds because $\Tbf$ is a tree.
%
%
%	We infer from \eqref{wqvertex}
%	and the growth assumption that $|S|_{w_{q}} = |S|_{w_{\mathrm{srw}}} \le z |S|$. 
The inequality 
	\eqref{wqedge} implies that $|\partial^{\Tbf_{q}}S|_{w_{q}} 
		\ge |\partial^{\Tbf_{q}}S|_{w_{\mathrm{srw}}} = |\partial^{\Tbf_{q}}S|$. 
	Now, let $C$ be a (possibly empty) $q$-isolated core containing all vertices in $A_q$ which are adjacent 
	to $S$.	When $A_q$ is removed from $\Tbf$, the volume $|\partial^{\Tbf}S|$
	of the edge boundary of $S$ in $\Tbf$ decreases by the 
	number of edges connecting $S$ with $C$. Hence, we obtain 
	$|\partial^{\Tbf_{q}}S| = |\partial^{\Tbf} S| - |\partial^{\Tbf} S\cap\partial^{\Tbf} C|$. 
	Altogether, we arrive at the estimate
	\begin{align}
		\label{iso-bound}
		\frac{q}{2} \, |S|_{w_q} - \frac{2+q}{2}\, |\partial^{\Tbf_q}S|_{w_q} 
		&\le \Delta^{\Tbf}_{q}S + \frac{2+q}{2}\, |\partial^{\Tbf} S\cap\partial^{\Tbf} C| \notag\\
		&= \Delta_{q}^{\Tbf}(S\cup C) - \Delta^{\Tbf}_{q}C 
			- \frac{2-q}{2} \,|\partial^{\Tbf} S\cap\partial^{\Tbf} C| \notag \\
		&\le \Delta_{q}(S\cup C) - \Delta_{q}C ,		
	\end{align}
	where the equality results from an application of \eqref{DeltaDisjSplit} and the last inequality 
	from $q \le 2$. For notational simplicity we dropped the superscript $\Tbf$ from the $q$-isolations in the last line of \eqref{iso-bound}.

 Since $S\subseteq \Tbf\setminus A_{q}$, the vertex 
	subset $S\cup C$ cannot be a $q$-isolated core in $\Tbf$. By definition, there must exist a (possibly empty) 
	vertex subset $B\subsetneq S \cup C$ with 
	\begin{equation}\label{iso1}
		\Delta_q(S\cup C)\leq \Delta_q B.
	\end{equation}
	W.l.o.g.\ we choose this vertex subset $B$ to be minimal in the sense that no proper subset of $B$ 
	has the property \eqref{iso1}. In other words, for every $\wtilde{B} \subsetneq B$, we must have 
	$\Delta_q(S\cup C) > \Delta_{q}\wtilde{B}$. Together with \eqref{iso1}, this means that $B$ is a 
	$q$-isolated core, whence $B \subseteq C$. Applying Lemma~\ref{DeltaBound} with $A=B$ and the 
	$q$-isolated core $S=C$, yields $\Delta_{q}(B) \le \Delta_{q}(B\cup C) = \Delta_{q}(C)$. Combining this 
	inequality with \eqref{iso1}, yields
	\begin{equation}\label{iso2}
		\Delta_q(S\cup C)\leq \Delta_q C.
	\end{equation}
	Now, \eqref{iso2} and \eqref{iso-bound} imply
	\begin{equation}
		\frac{q}{2} \, |S|_{w_q} - \frac{2+q}{2}\, |\partial^{\Tbf_q}S|_{w_q} \le 0,
	\end{equation}
	and the claim follows.
\end{proof}

Switching between the weighted trees $(\Tbf,w_{\mathrm{srw}})$ 
and $(\Tbf_q,w_q)$ will not only be done with the help of \eqref{srw=q} but also on the level of the 
corresponding weighted Hilbert spaces. 

\begin{definition}
	Let $q>0$ and let $\Tbf$ be an infinite rooted tree with $\deg(x) < \infty$ for every $x\in\Tbf$.
	We introduce the restriction map
	\begin{equation}
		\label{rho}
		\rho_{\Tbf} \colon
		\begin{array}{rcl} \ell^2(\Tbf, w_{\mathrm{srw}}) & \to & \ell^2(\Tbf_q,w_q) \\[1ex]
			(\psi_x)_{x\in \Tbf} & \mapsto & (\psi_{x})_{x\in \Tbf_q}
		\end{array}
	\end{equation}
	and its adjoint, the embedding 
	\begin{equation}
		\label{rhoa}
		\rho^*_{\Tbf}\colon \begin{array}{rcl} \ell^2(\Tbf_q, w_q) & \to 
			& \ell^2(\Tbf, w_{\mathrm{srw}}) \\[1ex]
			(\varphi_x)_{x\in \Tbf_q} & \mapsto & (\wtilde{\varphi}_x)_{x\in \Tbf}
		\end{array}, \quad \text{where} \quad 
		\wtilde{\varphi}_{x}  \coloneqq  \LEFTRIGHT\{.{ \begin{array}{@{}cl} \varphi_{x}, & \text{if } 
		x\in \Tbf_q, \\[.5ex] 0, & \text{if } x\in  A_q. \end{array}}
	\end{equation}
	We drop the index $\Tbf$ in our notation for both maps, if the underlying tree is clear. 
	Both $\rho$ and $\rho^{*}$ have operator norm $1$ due to \eqref{wqvertex}.
\end{definition}

The next lemma estimates the probability for the random walk to enter a bad geometric region 
consisting of several $q$-islands. Since we are on a tree we are able to obtain an estimate which scales 
with the square root of the number of involved $q$-islands. Without the tree property, one would get a
scaling with the square root of the volume of the edge boundaries of the involved $q$-islands as in 
\cite{virag2000bnddegree}. 
The improved scaling for trees will be crucial when applying the lemma in the next section. 

\begin{lemma}\label{TimeToEscape}
	Let $q \in\,]0,1[\,$ and let $\Tbf$ be an infinite rooted tree with infinite $q$-oceans
	$|\Tbf_{q}| = \infty$. 
	Furthermore, we assume the existence of $z\in\Nbb\setminus\{1\}$ 
	such that the growth condition $\deg(x')\leq z$ holds for every 
	$x'\in \mathbf{T}_{q}$.
	Let $J\in\Nbb$ and let $\mathcal C\coloneqq\bigcup_{j=1}^J C_j\subseteq \Tbf$ be the union of 
	arbitrarily chosen 
	$q$-islands $C_j\in A_q$,	$j\in\{1,...,J\}$. We also fix a vertex $x\in \Tbf$ with 
	$\dist_q(x,\mathcal C)\geq n$ for some $n\in\Nbb$. 
	Then, 
	\begin{equation}
		P_x[\tau_{\mathcal C} < \infty] \leq \frac{18}{q^{2}} \, \Bigl(1-\frac{q^2}{9}\Bigr)^{\frac{n}{2} 
			- 1} \, (zJ)^{\frac12}.
	\end{equation}
\end{lemma}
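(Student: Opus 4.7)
The plan is to bound the probability that the simple random walk $\{X_t\}_{t\in\Nbb_{0}}$ on $\Tbf$ ever hits $\mathcal{C}$ by the probability that the induced walk $\{W_t\}_{t\in\Nbb_{0}}$ on $(\Tbf_q,w_q)$ visits a small ``portal'' set $V^{*}\subseteq\Tbf_q$ of cardinality at most $J$, and then to invoke the spectral bound of Theorem~\ref{MarkovKernel}. Since $q<1$, Lemma~\ref{wic} gives $Q_{w_q}\geq q/(q+2)>q/3$, and hence $\|\mathbf{P}_q\|\leq\sqrt{1-q^2/9}$ by Theorem~\ref{MarkovKernel}.

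The decisive, tree-specific step is the portal construction. For each island $C_j$, removing the finite connected subtree $C_j$ from $\Tbf$ decomposes the remainder into connected components, and any two distinct vertices of $\partial_{\mathrm{out}}C_j$ lie in different components: the unique $\Tbf$-path between them must pass through $C_j$, since otherwise combining it with a path inside the connected subtree $C_j$ would produce a cycle. Hence the component containing $x$ meets $\partial_{\mathrm{out}}C_j$ in a unique vertex $v_j^{*}\in\Tbf_q$, and any trajectory of $\{X_t\}$ that reaches $C_j$ must cross the unique edge from $v_j^{*}$ to $C_j$ and therefore visit $v_j^{*}$. Setting $V^{*}\coloneqq\{v_1^{*},\ldots,v_J^{*}\}\subseteq\Tbf_q$, we obtain $\{\tau_{\mathcal{C}}<\infty\}\subseteq\{\exists\,t\in\Nbb_{0}:\,X_t\in V^{*}\}$, and identity~\eqref{srw=q0} translates the latter into a hitting event of the induced walk on the $q$-oceans.

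For the main case $x\in\Tbf_q$, a union bound combined with Cauchy--Schwarz in $\ell^2(\Tbf_q,w_q)$ and $\|\mathbf{P}_q^{k}\|\leq(1-q^2/9)^{k/2}$ yields
\begin{equation*}
P_x^{\Tbf_q}\big[\exists\,k\in\Nbb_{0}:\,W_k\in V^{*}\big]\;\leq\;\sum_{k=0}^{\infty}(\mathbf{P}_q^{k}\mathbf{1}_{V^{*}})(x)\;\leq\;\sum_{k=0}^{\infty}(1-q^2/9)^{k/2}\sqrt{\frac{|V^{*}|_{w_q}}{w_q(x)}},
\end{equation*}
and the degree bound $\deg(y)\leq z$ on $\Tbf_q$ together with $w_q(x)\geq 1$ gives $|V^{*}|_{w_q}/w_q(x)\leq zJ$. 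The distance assumption enters via the observation that every $W$-trajectory $x=y_0,\ldots,y_k=v^{*}\in V^{*}$ produces, through its associated $\Tbf$-trace that may shortcut through islands, a bridge from $\{x\}$ to $V^{*}$ of $q$-length at most $k-1$, so $k\geq\dist_q(x,V^{*})\geq\dist_q(x,\mathcal{C})-1\geq n-1$. Restricting the geometric series to $k\geq n-1$ and using the elementary inequality $1-\sqrt{1-u}\geq u/2$ for $u\in[0,1]$ produces the prefactor $18/q^2$ and the decay $(1-q^2/9)^{(n-1)/2}\leq(1-q^2/9)^{n/2-1}$, as claimed. The remaining case $x\in A_q$ (where $\dist_q(x,\mathcal{C})\geq 1$ forces $x\notin\mathcal{C}$) reduces to the previous case by the strong Markov property at $\tau_{\mathrm{oc}}$: a bridge-extension argument shows $\dist_q(X_{\tau_{\mathrm{oc}}},\mathcal{C})\geq n-1$, and applying the main-case bound with $n$ replaced by $n-1$ gives exactly the decay $(1-q^2/9)^{n/2-1}$ required.

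I expect the portal-vertex step to be the delicate point, since it hinges crucially on the tree property and is exactly what yields the improved scaling $\sqrt{zJ}$ in place of the general-graph scaling $\big(\sum_j|\partial C_j|\big)^{1/2}$ of \cite{virag2000bnddegree}. The secondary bookkeeping needed to justify $\dist_q(x,V^{*})\geq\dist_q(x,\mathcal{C})-1$ and the conversion of a $k$-step $W$-path into a bridge of $q$-length at most $k-1$ is a routine unpacking of the definitions of bridge and $q$-length, but also requires some care.
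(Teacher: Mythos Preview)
Your proposal is correct and follows essentially the same strategy as the paper's proof: both exploit the tree structure to identify a portal set $V\subseteq\partial_{\mathrm{out}}\mathcal{C}$ of cardinality at most $J$ through which the walk must pass, convert to the induced walk on $(\Tbf_q,w_q)$ via \eqref{srw=q0}, and then combine the spectral bound $\|\mathbf{P}_q\|\le(1-q^2/9)^{1/2}$ from Theorem~\ref{MarkovKernel} and Lemma~\ref{wic} with Cauchy--Schwarz and a geometric-series (Green-kernel) estimate; the case $x\in A_q$ is handled identically by the strong Markov property at the first exit into $\Tbf_q$. The only cosmetic differences are the order of the reduction (the paper reduces to $x\in\Tbf_q$ first and then works with $\dist_q\ge\max\{n-1,1\}$ throughout, whereas you treat the ocean case as primary and reduce afterwards) and that the paper packages the tail sum as $\mathbf{P}_q^{\nu}\mathbf{K}_q$ rather than summing explicitly.
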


\begin{proof}
	To begin with we will argue that we may conduct the proof 
	assuming w.l.o.g.\ that $x \in\Tbf_{q}$. Indeed, since $\dist_q(x,\mathcal C)\geq n$,
	we have $x\notin \mathcal C$. So suppose that $x\in A_q\setminus \mathcal C$. 
	Then there exists a $q$-island $C'\subseteq A_q\setminus \mathcal C$ such that $x\in C'$ 
	and we must have 
	$\dist_{q}(x,\mathcal C) \ge \max\{n,2\}$. In order to 
	reach $\mathcal C$, the simple random walk $(X_t)_{t\in\Nbb_0}$ on $\Tbf$ has to hit the outer 
	vertex boundary $\partial_{\mathrm{out}}C'$ before hitting $\mathcal C$. 
	Therefore the strong Markov property of $(X_t)_{t\in\Nbb_0}$ at the hitting time of 
	$\partial_{\mathrm{out}}C'$ implies
	\begin{equation}\label{xtov}
		P_x[\tau_{\mathcal C} < \infty]  
		= E_x\Bigl[P_{X_{\tau_{\partial_{\mathrm{out}}C'}}}[\tau_{\mathcal C} < \infty]\Bigr]
		\leq \sup_{y\in\partial_{\mathrm{out}} C'}P_y[\tau_{\mathcal C} < \infty], 
	\end{equation}	
	where $E_{x}\coloneqq \int\d P_{x}$ is the probabilistic expectation associated with $P_{x}$.
	Because of \eqref{xtov}, $\partial_{\mathrm{out}}C' \subseteq\Tbf_{q}$ and 
	$\dist_q(y,\mathcal C) \ge \dist_q(x,\mathcal C) -1$ for all $y \in \partial_{\mathrm{out}} C'$, 
	which holds due to $C'\subseteq A_q$, it is sufficient to consider $x\in\Tbf_{q}$ 
	with $\dist_{q}(x,\mathcal C) \ge \max\{n-1,1\}$ in the rest of this proof.  

	So, let us fix $x\in\Tbf_{q}$ with $\dist_{q}(x,\mathcal C) \ge \max\{n-1,1\}$. 
	Since $\Tbf$ is a tree and $\mathcal C$ consists of $J$ connected components there exists a subset 
	$V \subseteq \partial_{\mathrm{out}}\mathcal C \subseteq \Tbf_{q}$ of the outer vertex 
	boundary of $\mathcal C$ with $|V| \le J$
	and such that the 
 	simple random walk $(X_t)_{t\in\Nbb_0}$ has to pass a vertex from $V$ in the last step on his 
	way from $x$ before hitting $\mathcal C$ for the first time. 
	Thus, 
	we infer that
	\begin{equation}
		\label{first-tau-bound}
		P_x[\tau_{\mathcal C} < \infty] 
		\le \sum_{y \in V} P_{x} \big[ \exists\, t\in\Nbb_{0}: \; X_t =y \big] .
	\end{equation}
	Applying \eqref{srw=q0} and the union bound to the probability on the right-hand side of 
	\eqref{first-tau-bound}, rewriting it in terms of the Markov operator $\mathbf{P}_{q}$ 
	and then switching first from the unweighted Hilbert space 
	$\ell^{2}(\Tbf_{q})$ to the weighted Hilbert space $\ell^{2}(\Tbf_{q}, w_{q})$ and finally to 
	$\ell^{2}(\Tbf, w_{\mathrm{srw}})$ with the embedding $\rho^{*}$ and using \eqref{wqvertex}, 
	we obtain
	\begin{equation}
		\label{srw-q-change-done}
		P_{x} \big[ \exists\, t\in\Nbb_{0}: \; X_t =y \big]
		\le \frac{1}{w_{\mathrm{srw}}(x)} \, \sum_{t\in\Nbb_{0}} \langle 1_{\{x\}}, \rho^{*} 
			\mathbf{P}_{q}^{\mkern2mu t} \rho	1_{\{y\}} \rangle_{\Tbf, w_{\mathrm{srw}}}.
	\end{equation}
	We deduce from \eqref{first-tau-bound} and \eqref{srw-q-change-done}
	that 
	\begin{equation}
		\label{second-tau-bound}
		P_x[\tau_{\mathcal C} < \infty] 
		\le \frac{1}{w_{\mathrm{srw}}(x)} \, \sum_{t\in\Nbb_{0}}
				\langle 1_{\{x\}}, \rho^{*} \mathbf{P}_{q}^{\mkern2mu t} \rho	1_{V} 
				\rangle_{\Tbf, w_{\mathrm{srw}}}.
	\end{equation}
	Since $\dist_q(x,\mathcal C)\geq \max\{n-1,1\}$, the random walk needs at least 
	$\nu\coloneqq\max\{n-2,0\}$ 
	steps on the infinite connected weighted graph $\Tbf_q$ to reach 
	$V \subseteq \partial_{\mathrm{out}}\mathcal C$ from $x$ and every term in the $t$-series in 
	\eqref{second-tau-bound} with $t<\nu$ vanishes. 
	We note that 
	$\sum_{t=\nu}^{\infty} \mathbf{P}_{q}^{\mkern2mu t}= \mathbf{P}_{q}^{\mkern2mu \nu} \mathbf{K}_q$, 
	where the Green kernel $\mathbf{K}_q \coloneqq  \sum_{t\in\Nbb_{0}}\mathbf{P}_q^{\mkern2mu t}$ exists in 
	operator norm in the space of bounded operators on $\ell^{2}(\Tbf_{q},w_{q})$ and satisfies the norm 
	estimate
	\begin{equation}\label{Ks}
		\|\mathbf{K}_q\|_{\Tbf_{q},w_q} 
		\leq \frac{1}{1-\|\mathbf{P}_q\|_{\Tbf_{q},w_q}} 
		\leq \frac{18}{q^2}
	\end{equation}
	because $\|\mathbf{P}_q\|_{\Tbf_{q},w_q} \le (1- q^{2}/9)^{1/2} \le 1- q^{2}/18$
	due to Theorem~\ref{MarkovKernel} and $Q_{w_{q}} \ge q/3$, which follows from Lemma~\ref{wic} and $q <1$. Accordingly, the $t$-series in 
	\eqref{second-tau-bound} can be written as 
	\begin{align}
		\label{Hilbertnorm1}
		\langle 1_{\{x\}}, \rho^*\mathbf{P}_q^{\mkern2mu \nu}	\mathbf{K}_q\rho 1_{V} %\mathbf{P} 1_{C}	
			\rangle_{\Tbf,w_{\mathrm{srw}}}
		&\le \| 1_{\{x\}}\|_{\Tbf,w_{\mathrm{srw}}} \|\mathbf{P}_q\|_{\Tbf_{q},w_q}^\nu
				\|\mathbf{K}_q\|_{\Tbf_{q},w_q} \| 1_{V} %\mathbf{P} 1_{C}
				\|_{\Tbf,w_{\mathrm{srw}}} \notag \\
		& \le w_{\mathrm{srw}}(x)^{\frac{1}{2}} \Bigl(1-\frac{q^2}{9}\Bigr)^{\frac{\nu}{2}}\; 
				\frac{18}{q^2}	\, J^{\frac12} z^{\frac12}, 
	\end{align}
	where the first inequality relies on the Cauchy--Schwarz inequality and the fact 
	that the operator norms of $\rho$ and $\rho^{*}$ equal $1$.
	Now, the lemma follows from \eqref{second-tau-bound} and \eqref{Hilbertnorm1}.
\end{proof}

%%%%%%%%%%%%%%%%%%%%%%%%%%%%%%%%%%%%%%%%%%%%%%%%%%%%%%%%%%%%%%%%%%%%%%%%%
%%%%%%%%%%%%%%%%%%%%%%%%%%%%%%%%%%%%%%%%%%%%%%%%%%%%%%%%%%%%%%%%%%%%%%%%%

\section{Proof of Theorems~\ref{Decay} and \ref{thm}}
\label{sec:proofs}

The next theorem is our main technical result. It will allow us to prove Theorem~\ref{Decay} 
and Theorem~\ref{thm}.

\begin{theorem}\label{exactdecay} 
	Let $(z_t)_{t\in\Nbb}\subseteq \Nbb\setminus\{1,2\}$ be a sequence of 
	constants with $z_t=\;\scriptstyle\mathcal{O}$$(t^{\frac12})$ as $t\to\infty$. 
	Then, there exists an initial time $t_{0}\in\Nbb$ such that 
	\begin{equation}
		\label{edest}
		P^{\Tbf}_{o}[X_{t}=o] \leq \exp\bigg[- \frac{h^2}{144}\,\Big(\frac{t}{z_{t}^{2}}\Big)^{\frac13}\bigg] 
	\end{equation}
	for every $t \ge t_{0}$ and every $\Tbf\in (M_{t,z_{t}}\cap H_t^c) \setminus\mathcal{N}$. 
	Here, $h>0$ is given 
	by Remark~\ref{rem:h-small}, 
	the events $M_{t,z_{t}}$ and $H_{t}$ are introduced in 
	Lemma~\ref{badlargeislands} and the $G$-null set $\mathcal{N}$ is defined to be the union of 
	the $G$-null set 
	where $|\Tbf| = \infty$ fails with the $G$-null set where Corollary~\ref{VolumeBound} fails.
	We note that the initial time $t_{0}\in\mathbb{N}$ depends only on the given sequence 
	$(z_t)_{t\in\Nbb}$ and on $h$.
\end{theorem}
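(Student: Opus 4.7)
My plan is to adapt Vir\'ag's deterministic heat-kernel argument \cite[Sect.~3]{virag2000bnddegree} to the present quenched setting, using the probabilistic exclusion of $M_{t,z_t}^c \cup H_t$ to supply, uniformly in $\Tbf$, the geometric control that in Vir\'ag's proof comes from an unquantified initial time $t(\Tbf)$.

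Fix $q := \frac{2}{3}h \in \,]0,1[\,$ and the length scale $n := \lfloor (t/z_t^2)^{1/3} \rfloor$. The first step is to produce a finite union $\mathcal{C} = \bigcup_{j=1}^{J} C_j$ of $q$-islands of $\Tbf$, assembled greedily from those lying within graph distance $t$ of the root, so that $q\ev{\mathcal{C}} \geq 2^{5/6}\, t^{1/3}$. Combined with $\Tbf \in H_t^c$, the negation of \eqref{territoryineq} forces every bridge structure interconnecting $\{o\}\cup\mathcal{C}$ to have $q$-length exceeding $h\ev{\mathcal{C}}/(3 z_t) \geq n$, so in particular $\dist_q(o, \mathcal{C}) \geq n$. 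The bounded degrees from $M_{t,z_t}$ together with Corollary~\ref{VolumeBound} supply a polynomial bound $J \leq J(t, z_t)$ on the number of islands in $\mathcal{C}$.

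Next, I split
\begin{equation*}
P^{\Tbf}_o[X_t = o] \leq P^{\Tbf}_o[\tau_\mathcal{C} < \infty] + P^{\Tbf}_o[X_t = o,\, \tau_\mathcal{C} = \infty],
\end{equation*}
where $\tau_\mathcal{C}$ is the hitting time of $\mathcal{C}$. Lemma~\ref{TimeToEscape} bounds the first summand by $(18/q^2)(1-q^2/9)^{n/2-1}(z_t J)^{1/2}$, which does not exceed $\exp(-\frac{h^2}{144}(t/z_t^2)^{1/3})$ for $t \geq t_0$ once the polynomial prefactor has been absorbed into the exponent. For the second summand, on $\{\tau_\mathcal{C} = \infty\}$ the walk is confined to $\Tbf \setminus \mathcal{C}$, whose $q$-oceans inherit the isoperimetric bound $Q_{w_q} \geq q/3$ from Lemma~\ref{wic}, so the induced Markov operator $\mathbf{P}_q$ on the weighted graph $(\Tbf_q \setminus \mathcal{C}, w_q)$ has norm at most $1 - q^2/18$ by Theorem~\ref{MarkovKernel}. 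Passing to the induced walk via the restriction/embedding pair $(\rho, \rho^*)$ as in the proof of Lemma~\ref{TimeToEscape}, the return probability inherits a factor $\|\mathbf{P}_q\|^{k}$ for each length $k$ of the induced walk.

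The main obstacle is the last step: to show that given $X_t = o$, the number $k$ of induced-walk steps made in time $t$ is itself of order at least $n$. The assumption $M_{t,z_t}$ bounds all visited vertex degrees by $z_t$, while the greedy choice of $\mathcal{C}$ guarantees that the remaining $q$-islands in $\Tbf \setminus \mathcal{C}$ are small; these two facts, together with the assumption $z_t = \mathcal{O}(t^{1/2})$, should imply that each excursion of the simple random walk into an island is short enough to enforce $k \geq n$. Combining both estimates and absorbing all polynomial prefactors into the exponential decay for $t \geq t_0$ then yields the claimed bound, with $t_0$ depending only on $h$ and on the sequence $(z_t)_{t\in\Nbb}$.
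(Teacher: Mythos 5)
There is a genuine gap in your proposal, and it is precisely the one the paper's two-scale trick is designed to close. You fix a single expansion parameter $q=\tfrac23 h$ and then split into two cases: the walk hits the collection $\mathcal{C}$, or it avoids $\mathcal{C}$. In the second case you are confined to $\Tbf\setminus\mathcal{C}$, but this set still contains all the $q$-islands \emph{not} in $\mathcal{C}$, so the walk is not observed on the $q$-oceans and the $t$ original steps do not translate into $t$ steps of the induced chain. You acknowledge this as "the main obstacle" and assert that bounded degrees plus the greedy construction "should imply" that excursions into islands are short enough to give at least $n$ induced steps. But this is exactly the content of Vir\'ag's Lemma~3.5, which the paper explicitly identifies as the source of the uncontrolled initial time $t(\Tbf)$; nothing in the exclusion of $M_{t,z_t}^{c}\cup H_t$ quantifies the total time spent in the small residual islands. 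The paper does not try to prove such a statement. Instead it introduces a second, time-dependent scale $q_t\sim h/(tz_t)^{1/3}\ll q$ and conditions on whether the walk ever enters a $q_t$-island. If it never does, \emph{every one} of its $t$ steps is a step of the induced chain on the $q_t$-oceans, because on a tree two adjacent ocean vertices cannot simultaneously lie on the boundary of a common island; the $q_t$-scale isoperimetric constant $Q_{w_{q_t}}\ge q_t/3$ then gives $(1-q_t^2/9)^{t/2}=\exp(-\tfrac{h^2}{144}(t/z_t^2)^{1/3})$ directly, no counting of induced steps required. With a single scale $q$ you have no analogous mechanism.

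There is also a quantitative error in your $q$-distance estimate. With $q\ev{\mathcal C}\ge 2^{5/6}t^{1/3}$, i.e.\ $\ev{\mathcal C}\sim t^{1/3}/q$, the bound coming from $\Tbf\in H_t^c$ is
\begin{equation*}
  \dist_q(o,\mathcal C)\ >\ \frac{h\,\ev{\mathcal C}}{3z_t}\ \sim\ \frac{t^{1/3}}{2z_t},
\end{equation*}
which is smaller than $n=(t/z_t^2)^{1/3}=t^{1/3}/z_t^{2/3}$ by a factor of order $z_t^{1/3}$. Plugged into Lemma~\ref{TimeToEscape}, this gives only $\exp(-c\,t^{1/3}/z_t)$, which is strictly weaker than the target $\exp(-c\,t^{1/3}/z_t^{2/3})$ and, for $z_t\sim t^{1/k}$, spoils the final exponent $\tfrac13-\tfrac{2}{3k}$. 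To repair this you would have to restrict attention to $q$-islands of volume at least $\sim(tz_t)^{1/3}$, which is exactly the threshold $1/q_t$ defining the set $A_{q,t}$ in the paper's proof; this is another instance where the second scale $q_t$ is doing essential work. Finally, note that Lemma~\ref{TimeToEscape} requires the degree bound $z$ to hold on \emph{all} of the $q$-oceans, not just up to height $t$; the paper gets this by passing to the $q$-regularised tree $\Tbf^{\mkern1mu q}$ via Definition~\ref{def:tprime} and Lemma~\ref{prime}, a step your argument omits.
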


\noindent
Before we can prove Theorem~\ref{exactdecay} with the help of the results from the previous section, 
we have to to deal with the possibly unbounded number of offspring in the oceans of the tree 
$\Tbf \in M_{t,z_{t}}$ beyond the height $t$. 

\begin{definition}
	\label{def:tprime}
	Let $q \in \,]0,1[\,$, $t\in\Nbb$, $z_{t} \in\Nbb\setminus\{1\}$ and consider a tree 
	$\Tbf \in M_{t,z_{t}}$. 
	\begin{enumerate}[\upshape(i)]
	\item
		We construct recursively, starting from the root, an associated \emph{$q$-regularised tree}
		$\Tbf^{\mkern1mu q}$---not to be confused with $\Tbf_{q}$ from Definition~\ref{def:Tq}---by 
		\begin{equation}
			\label{T'offspring}
			Z_{\Tbf^{\mkern1mu q}}(x) \coloneqq \LEFTRIGHT\{.{\begin{array}{@{\,}ll@{}} Z_{\Tbf}(x), & \text{if } x\in \Tbf_{ot} 
				\text{ or} \\
				& \text{if } x\in C \text{ for some $q$-island } C \subseteq A_{q}(\Tbf) \text{ with } \dist(o, C) \le t,\\[.5ex]
				z_{t} -1, & \text{otherwise.}\end{array}
				}
		\end{equation}
		This means that $\Tbf_{ot} = \Tbf^{\mkern1mu q}_{ot}$ and that the regularised tree 
		$\Tbf^{\mkern1mu q}$ is homogenous from height $t+1$ onwards except at the vertices 
		of those $q$-islands of $\Tbf$ which have non-trivial intersection with $\Tbf_{ot}$ and extend also
		beyond the height $t$.	
	\item
		We write $\{X^{(q)}_{t}\}_{t\in\Nbb_{0}}$ for the simple random walk on $\Tbf^{q}$ and 
		$\tau^{(q)}_{S}$ for the first hitting time after zero of the vertex subset $S\subseteq \Tbf^{q}$ 
		by $\{X^{(q)}_{t}\}_{t\in\Nbb_{0}}$.
		The \emph{regularised weighted graph} $(\Tbf^{\mkern1mu q}_{q},w^{(q)}_{q})$, where 
		$\Tbf^{\mkern1mu q}_{q} \coloneqq \Tbf^{\mkern1mu q}\setminus A_{q}
			(\Tbf^{\mkern1mu q})$ are the $q$-oceans of $\Tbf^{q}$, 
		 is given as in Definition~\ref{def:Tq} 
		but with every reference to $\Tbf$ there replaced by $\Tbf^{\mkern1mu q}$, that is,
		\begin{equation}
			w^{(q)}_{q}(x,y)  \coloneqq  w^{(q)}_{\mathrm{srw}}(x)
			P^{\Tbf^{\mkern1mu q}}_x[X^{(q)}_{\tau^{(q)}_{\Tbf^{\mkern1mu q}_{q}}}=y]
		\end{equation}
		for every $x,y \in \Tbf^{\mkern1mu q}_{q}$ with $w^{(q)}_{\mathrm{srw}}(x) \coloneqq \deg_{\Tbf^{\mkern1mu q}}(x)$ being 
		the vertex degree of $x$ in $\Tbf^{\mkern1mu q}$. The standard random walk on 
		$(\Tbf^{\mkern1mu q}_{q},w^{(q)}_{q})$ will be denoted by $\{W^{(q)}_{t}\}_{t\in\Nbb_{0}}$.
	\end{enumerate}
\end{definition}

\begin{lemma}
	\label{prime}
	Let $0 <q' \le q < \min\{\mathbf{i}_{\mkern2mu\Tbb},1\}$, let $z_{t} \in\Nbb\setminus\{1,2\}$ 
	and consider an infinite tree 
	$\Tbf \in M_{t,z_{t}} \setminus\mathcal{N}$, with $\mathcal{N}$ being the null set from Theorem~\ref{exactdecay}. Then
	\begin{enumerate}[\upshape(i)]
	\item
		\label{prime-bdd}
		$Z_{\Tbf^{\mkern1mu q}}(x) \le z_{t}-1$ for all $x\in \Tbf^{\mkern1mu q}_{q}$.
	\item
		\label{prime-islands}
		The representation
		\begin{equation}
			\label{eq:prime-islands}
			\displaystyle	A_{q'}(\Tbf^{\mkern1mu q}) = \bigcup_{\substack{q'\text{-islands } 
			C\subseteq A_{q'}(\Tbf) : \;	\dist(o,C) \le t}} 
		\end{equation}
		holds, and we have $|\Tbf^{\mkern1mu q} \setminus A_{q'}(\Tbf^{\mkern1mu q})| =\infty$.
	\item
		\label{prime-prob}
		$w^{(q)}_{q}(x,y) = w_{q}(x,y)$ for every $x,y \in \Tbf_{ot} \setminus
		A_{q}(\Tbf) = \Tbf^{\mkern1mu q}_{ot} \setminus A_{q}(\Tbf^{\mkern1mu q})$.
	\end{enumerate}
\end{lemma}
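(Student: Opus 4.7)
My plan is to prove (ii) first and then deduce (i) and (iii) from it, since both hinge on the description of $A_{q'}(\Tbf^{\mkern1mu q})$ given in (ii). A convenient preliminary observation is that for $q' \le q$ every $q'$-isolated core of $\Tbf$ is automatically a $q$-isolated core, because for any proper subset $A \subsetneq C$
\[
\Delta_q C - \Delta_q A = (\Delta_{q'} C - \Delta_{q'} A) + (q-q')\,(|C|-|A|),
\]
and both summands are non-negative (the first strictly). In particular every $q'$-island $C$ of $\Tbf$ sits inside some $q$-island $D$ of $\Tbf$.

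For the inclusion $\supseteq$ in (ii), I fix a $q'$-island $C$ of $\Tbf$ with $\dist(o,C) \le t$ and let $D \supseteq C$ be its ambient $q$-island. Since $\dist(o,D) \le \dist(o,C) \le t$, the recursive construction keeps the offspring count of every vertex of $D$ intact, so edge boundaries of subsets of $C$ agree in $\Tbf$ and $\Tbf^{\mkern1mu q}$; hence $C$ is still a $q'$-isolated core in $\Tbf^{\mkern1mu q}$ and $C \subseteq A_{q'}(\Tbf^{\mkern1mu q})$. For the reverse inclusion, let $S$ be a connected (Remark~\ref{core-rem}) $q'$-isolated core of $\Tbf^{\mkern1mu q}$ and set $R \coloneqq \Tbf_{ot} \cup \bigcup\{D : D \text{ preserved } q\text{-island of }\Tbf\}$. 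The main step is to show $S \subseteq R$. If $S$ had a non-empty intersection $T$ with one of the $(z_t-1)$-ary regular extensions attached below $R$, then every vertex of $T$ would have degree $z_t \ge 3$ in $\Tbf^{\mkern1mu q}$; a tree count gives $|\partial^{\Tbf^{\mkern1mu q}}T| = (z_t-2)|T|+2$, and since each extension is joined to $R$ by a single entry edge one obtains
\[
\Delta_{q'}^{\Tbf^{\mkern1mu q}} S - \Delta_{q'}^{\Tbf^{\mkern1mu q}}(S \setminus T) = (q' - z_t + 2)\,|T| < 0,
\]
contradicting the $q'$-isolated core property of $S$ (the case $S=T$ is handled directly by $\Delta^{\Tbf^{\mkern1mu q}}_{q'}S = (q'-z_t+2)|S|-2<0$). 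Hence $S \subseteq R$, and because $R$-vertices keep their $\Tbf$-neighbours in $\Tbf^{\mkern1mu q}$ we have $\Delta_{q'}^{\Tbf} A = \Delta_{q'}^{\Tbf^{\mkern1mu q}} A$ for every $A \subseteq S$; so $S$ is also a $q'$-isolated core of $\Tbf$ and sits inside a unique $q'$-island $C$ of $\Tbf$, where an analogous extremal argument excludes that $C$ itself extends outside $R$ and thereby forces $\dist(o,C) \le t$. The infinitude $|\Tbf^{\mkern1mu q} \setminus A_{q'}(\Tbf^{\mkern1mu q})| = \infty$ is then immediate, since $\Tbf^{\mkern1mu q}$ contains an infinite $(z_t-1)$-ary subtree while the right-hand side of (ii) is a finite union (only finitely many $q'$-islands meet the finite set $\Tbf_{ot}$) of finite sets (by Corollary~\ref{VolumeBound}).

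Specialising (ii) to $q' = q$ identifies $A_q(\Tbf^{\mkern1mu q})$ with the union of the preserved $q$-islands of $\Tbf$, from which both remaining parts follow. For (i), any $x \in \Tbf^{\mkern1mu q}_q$ avoids every preserved $q$-island, so either $x \in \Tbf_{ot}$, whence $Z_{\Tbf^{\mkern1mu q}}(x) = Z_{\Tbf}(x) \le z_t - 1$ because $\Tbf \in M_{t,z_t}$, or $x \notin \Tbf_{ot}$, in which case $Z_{\Tbf^{\mkern1mu q}}(x) = z_t - 1$ by the recursive construction. For (iii), the set identity uses that any $q$-island of $\Tbf$ touching $\Tbf_{ot}$ automatically has $\dist(o,\cdot) \le t$ and is thus preserved; the weight identity then follows because $x \in \Tbf_{ot} \setminus A_q(\Tbf)$ has matching neighbours and degrees in $\Tbf$ and $\Tbf^{\mkern1mu q}$, and every $q$-island adjacent to $x$ is preserved with identical internal structure, so the induced excursions through $A_q$ have the same law in both trees and produce matching hitting distributions on the oceans. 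The main obstacle I anticipate is the last step of (ii), namely extracting $\dist(o,C) \le t$ from the weaker conclusion $S \subseteq R$: a priori a $q'$-island of $\Tbf$ could sit entirely at depth $> t$ inside a preserved $q$-island, and ruling this out requires the same kind of tree-surgery argument used to force $S \subseteq R$, now applied to the larger set $C$.
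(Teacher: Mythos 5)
Your overall strategy for part (ii) is the same as the paper's: show that the $q'$-isolation functionals of $\Tbf$ and $\Tbf^{\mkern1mu q}$ agree on the region where the construction \eqref{T'offspring} preserves the offspring counts, and use the forced degree $z_t\ge 3$ on the regularised part to show that no $q'$-isolated core of $\Tbf^{\mkern1mu q}$ can enter it. Your surgery computation $\Delta_{q'}S-\Delta_{q'}(S\setminus T)\ge(q'-z_t+2)|T|>0\cdot(-1)$, i.e.\ $<0$, is essentially the paper's inequality $0<q'-z_t+2$, and your treatments of (i), (iii) and of the infinitude claim are correct and match the paper.

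The genuine gap is exactly the one you flag at the end, and your proposed repair does not close it. After establishing $S\subseteq R\coloneqq\Tbf_{ot}\cup\bigcup\{D:\ D\text{ preserved }q\text{-island}\}$ and hence that $S$ is a $q'$-isolated core of $\Tbf$ lying in a $q'$-island $C$ of $\Tbf$, you still must show $\dist(o,C)\le t$. The problematic case is $S\cap\Tbf_{ot}=\emptyset$, so that $S$ lies in a preserved $q$-island $D$ (with $\dist(o,D)\le t$) entirely beyond height $t$, and $C\subseteq D$ could a priori be a $q'$-island of $\Tbf$ sitting strictly inside $D$ with $\dist(o,C)>t$. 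Your suggestion to run ``the same kind of tree-surgery argument \dots applied to the larger set $C$'' cannot work here: that argument derives its contradiction from the fact that every vertex outside $R$ has degree exactly $z_t\ge 3$ in $\Tbf^{\mkern1mu q}$, whereas $C$ lies \emph{inside} $R$ (inside $D$), where all vertices retain their original, uncontrolled degrees from $\Tbf$; there is no analogue of the bound $\Delta_{q'}S-\Delta_{q'}(S\setminus T)<0$ available. What is needed instead is a statement purely about the original tree, namely that a $q'$-island of $\Tbf$ contained in a $q$-island $D$ with $\dist(o,D)\le t$ cannot lie entirely at depth $>t$; this does not follow from anything you have proved, and it is not a formal consequence of $A_{q'}\subseteq A_q$. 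The paper's own proof takes a different route at this point: it compares $C'$ directly with the set $\wtilde A$ of \emph{near} $q'$-islands of $\Tbf$ (rather than with $R$), splits into the cases $C'\subseteq\Tbf_{ot}\cup\wtilde A$ and $C'\not\subseteq\Tbf_{ot}\cup\wtilde A$, and in the second case extracts a vertex $x'\in C'$ of degree $z_t$ with $\deg_{C'}(x')=1$ to reach the contradiction $0<q'-z_t+2$. You should either adopt that case split or supply a separate argument ruling out deep $q'$-sub-islands of preserved $q$-islands; as it stands, part (ii) of your proof is incomplete.
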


\begin{proof}
	Part~\eqref{prime-bdd} holds by construction of $\Tbf^{\mkern1mu q}$ and because $\Tbf\in M_{t,z_{t}}$.

	As to Part~\eqref{prime-islands} we define 
	\begin{equation}
		\wtilde{A} \coloneqq \bigcup_{\substack{q'\text{-islands } 
			C\subseteq A_{q'}(\Tbf) : \;	\dist(o,C) \le t}} C
	\end{equation}
	and show two inclusions.\\
	``$\wtilde{A} \subseteq A_{q'}(\Tbf^{\mkern1mu q})$'' \quad
	The vertices of $\Tbf_{ot} \cup \wtilde{A}$ belong also to $\Tbf^{\mkern1mu q}$ and have the same degree in 
	$\Tbf^{\mkern1mu q}$ as in $\Tbf$. Therefore and since $\dist(o,C) \le t$ for every $q'$-island 
	$C \subseteq \wtilde{A}$, we infer that 
	\begin{equation}
		\label{q-isol-equal}
		\Delta_{q'}^{\Tbf} S = \Delta_{q'}^{\Tbf^{\mkern1mu q}} S
	\end{equation}
	for every finite vertex subset $S\subseteq \Tbf_{ot} \cup\wtilde{A}$. As each $q'$-island 
	$C \subseteq \wtilde{A}$ is finite by Corollary~\ref{VolumeBound} and thus a $q'$-isolated core in $\Tbf$,
	the identity
	\eqref{q-isol-equal} implies that $C$ is also a $q'$-isolated core in $\Tbf^{\mkern1mu q}$ and, hence, 
	$C\subseteq A_{q'}(\Tbf^{\mkern1mu q})$. \\
	``$A_{q'}(\Tbf^{\mkern1mu q}) \subseteq \wtilde{A}$'' \quad
	Let $\emptyset\neq C' \subseteq A_{q'}(\Tbf^{\mkern1mu q})$ be a $q'$-isolated core in $\Tbf^{\mkern1mu q}$.
	In particular, $C'$ is finite. 
	First, we consider the case where $C' \subseteq \Tbf_{ot}^{\mkern1mu q} \cup \wtilde{A}$. In this case, 
	the identity \eqref{q-isol-equal} implies 
	that $C'$ is also a $q'$-isolated core in $\Tbf$, i.e.\ $C' \subseteq A_{q'}(\Tbf)$ and, hence, $C'\subseteq \wtilde{A}$. We now show that the complementary case
	in which there exists a vertex 
	$x \in C' \cap [\Tbf^{\mkern1mu q} \setminus (\Tbf_{ot}^{\mkern1mu q} \cup \wtilde{A})]$ cannot occur.
	Indeed, since $\Tbf^{\mkern1mu q}$ is a tree and $C'$ is finite, it follows that there exists 
	$x' \in C' \cap [\Tbf^{\mkern1mu q} \setminus (\Tbf_{ot}^{\mkern1mu q} \cup \wtilde{A})]$ with 
	$\deg_{\Tbf^{\mkern1mu q}}(x') =z_{t}$ and $\deg_{C'}(x')=1$. By the definition of $C'$ being a 
	$q'$-isolated core of $\Tbf^{\mkern1mu q}$ we have 
	\begin{equation}
		0< \Delta_{q'}^{\Tbf^{\mkern1mu q}}C' - \Delta_{q'}^{\Tbf^{\mkern1mu q}}(C'\setminus\{x'\}) 
		= q' - \big(\deg_{\Tbf^{\mkern1mu q}}(x') - 2\deg_{C'}(x')\big) 
		= q' - z_{t} + 2.
	\end{equation}
	But this is a contradiction, because $q' <1$ and $z_{t} \ge 3$. This finishes the proof of
	\eqref{eq:prime-islands}.  The equality \eqref{eq:prime-islands} implies in particular that $|A_{q'}(\Tbf^{\mkern1mu q})| < \infty$, because 
	Corollary~\ref{VolumeBound} applied to $\Tbf\notin\mathcal N$ guarantees the finiteness of each $q'$-island 
	$C \subseteq A_{q'}(\Tbf)$. Moreover, $|\Tbf|=\infty$ because $\Tbf\notin\mathcal N$ so that $\Tbf \setminus \wtilde{A} \neq \emptyset$, and the construction of $\Tbf^{\mkern1mu q}$ implies that 
	$|\Tbf^{\mkern1mu q}|=\infty$. This finishes the proof of \eqref{prime-islands}.

	Finally, we prove Part~\eqref{prime-prob}. We recall that by the construction of
	$\Tbf^{\mkern1mu q}$ and \eqref{eq:prime-islands}, 
	the tree  $\Tbf_{ot} \cup A_{q}(\Tbf^{\mkern1mu q})$ is an identical subtree of both 
	$\Tbf$ and $\Tbf^{\mkern1mu q}$. 
	Let $x,y \in \Tbf_{ot} \setminus A_{q}(\Tbf) = \Tbf^{\mkern1mu q}_{ot} \setminus A_{q}(\Tbf^{\mkern1mu q})$. 
	In particular, we have 
	\begin{equation}
		\label{prime-prob1}
		w^{(q)}_{\mathrm{srw}}(x) = \deg_{\Tbf^{\mkern1mu q}}(x) = \deg_{\Tbf}(x) = w_{\mathrm{srw}}(x).
	\end{equation}
	Moreover, the simple random walk 
	$\{X_{s}\}_{s\in\Nbb_{0}}$ on $\Tbf$ when restricted to $\Tbf_{ot} \cup A_{q}(\Tbf^{\mkern1mu q}) \subseteq \Tbf$ 
	coincides with the simple random walk $\{X_{s}^{(q)}\}_{s\in\Nbb_{0}}$ on $\Tbf^{\mkern1mu q}$ when 
	restricted to $\Tbf_{ot} \cup A_{q}(\Tbf^{\mkern1mu q}) \subseteq \Tbf^{\mkern1mu q}$. This implies 
	\begin{equation}
		\label{prime-prob2}
		P^{\Tbf^{\mkern1mu q}}_x[X^{(q)}_{\tau^{(q)}_{\Tbf^{\mkern1mu q}_{q}}}=y]
		= P^{\Tbf}_x[X_{\tau_{\Tbf_{q}}}=y],
	\end{equation}
	and the assertion follows from \eqref{prime-prob1} and \eqref{prime-prob2}.
\end{proof}

\begin{proof}[Proof of Theorem~\ref{exactdecay}]
	We fix $t\in\Nbb$, $\Tbf\in (M_{t,z_{t}} \cap H_t^c)\setminus\mathcal{N}$, $q \coloneqq \frac23 h$ and 
	\begin{equation}\label{qt}
		q_{t} \coloneqq \frac{h}{2\sqrt{2} \,(t z_{t})^{\frac13}} 
	\end{equation}
	so that $q_{t} < q$ and, hence, 
	\begin{equation}
		\label{aqt<aq}
		A_{q_t}(\Tbf) \subseteq A_q(\Tbf)
	\end{equation} 
	by Lemma~\ref{Contain}. 
	
	We decompose the return probability of the simple random walk on $\Tbf$ according to 
	\begin{align}
		\label{splitp}
		P_o^\Tbf[X_t=o] 
		&= P^{\Tbf}_o\big[X_t=o \wedge \forall s\in\{1,...,t\}:\; X_s\in 
				\Tbf_{ot} \setminus A_{q_{t}}(\Tbf) \big]
			 \notag \\
		&\quad + P^{\Tbf}_o\big[X_t=o \wedge \exists\, s\in\{1,...,t\}:\; X_s\in \Tbf_{ot} \cap 
				A_{q_{t}}(\Tbf)\big] \notag \\
		&= P^{\Tbf^{\mkern1mu q_{t}}}_o \big[X^{(q_{t})}_t=o \wedge \forall s\in\{1,...,t\}:\; X_s^{(q_{t})}\in 
				\Tbf^{\mkern1mu q_{t}}_{ot} \setminus A_{q_{t}}(\Tbf^{\mkern1mu q_{t}}) \big]
			 \notag \\
		&\quad + P^{\Tbf^{\mkern1mu q}}_o \big[X_t^{(q)}=o \wedge \exists\, s\in\{1,...,t\}:\; 
				X^{(q)}_s\in \Tbf^{\mkern1mu q}_{ot} \cap A_{q_{t}}(\Tbf^{\mkern1mu q})\big]. 
	\end{align}
	As for the second equality, we note that the regularised trees satisfy 
	$\Tbf_{ot} =\Tbf_{ot}^{\mkern1mu q'}$ and
	$\Tbf_{ot} \cap A_{q_{t}}(\Tbf) = \Tbf_{ot}^{\mkern1mu q'} \cap A_{q_{t}}(\Tbf^{\mkern1mu q'})$,
	which follows from Lemma~\ref{prime}\eqref{prime-islands}, for both $q'=q_{t}$ and $q'=q$.
		
	Next, we estimate the probability in the third line of \eqref{splitp}.
	The fact that $\Tbf^{\mkern1mu q_{t}}$ is a tree implies that the simple random walk in this
	probability jumps only between vertices in 	
	$\Tbf^{\mkern1mu q_{t}}_{q_{t}}$ 
	and such that no two consecutive vertices $x,y$ in any path 
	belong to the outer vertex boundary of the same $q_{t}$-island of $\Tbf^{\mkern1mu q_{t}}$. 
	This means that for each jump, we have the equality 
	$w^{(q_{t})}_{\text{srw}}(x,y)=w^{(q_{t})}_{q_{t}}(x,y)$, cf.\ \eqref{wqedge}. Therefore, the  
	estimate
	\begin{equation}
		P^{\Tbf^{\mkern1mu q_{t}}}_o \big[X_t^{(q_{t})}=o \wedge \forall s\in\{1,...,t\}:\; 
				X_s^{(q_{t})}\in \Tbf^{\mkern1mu q_{t}}_{ot} \setminus 
				A_{q_{t}}(\Tbf^{\mkern1mu q_{t}}) \big]
		\le P^{\Tbf^{\mkern1mu q_{t}}_{q_{t}}}_{o} [W_{t}^{(q_{t})}=o]
	\end{equation}
	holds, where the inequality arises because the requirements that 
	$\{W_{s}^{(q_{t})}\}_{s\in\{1,\ldots,t\}}$ must not jump over $q_{t}$-islands or is forbidden 
	to stay at a vertex have been dropped. 
	Rewriting the right-hand side in terms of the associated Markov operator 
	$\mathbf{P}_{\Tbf^{\mkern1mu q_{t}}_{q_t}}$ on 
	the weighted Hilbert space $\ell^{2}(\Tbf^{\mkern1mu q_{t}}_{q_t},w^{(q_{t})}_{q_{t}})$, we obtain
	\begin{align}
		\label{estp'}
		P^{\Tbf^{\mkern1mu q_{t}}}_o & \big[X_t^{(q_{t})}=o \wedge \forall s\in\{1,...,t\}:
				\; X_s^{(q_{t})}\in \Tbf^{\mkern1mu q_{t}}_{ot} \setminus A_{q_{t}}(\Tbf^{\mkern1mu q_{t}}) \big] \notag \\
		&\le \frac{1}{w^{(q_{t})}_{q_{t}}(o)} \, \langle 1_{\{o\}} ,
			 \mathbf{P}_{\Tbf^{\mkern1mu q_{t}}_{q_{t}}, w^{(q_{t})}_{q_{t}}}^{\mkern2mu t} 
					1_{\{o\}} \rangle_{\Tbf^{\mkern1mu q_{t}}_{q_t}, \,	w^{(q_{t})}_{q_{t}}} 
			\leq \|\mathbf{P}_{\Tbf^{\mkern1mu q_{t}}_{q_{t}}, w^{(q_{t})}_{q_{t}}} \|_{\Tbf^{\mkern1mu q_{t}}_{q_{t}},\,
					 w^{(q_{t})}_{q_{t}}}^t 
			\leq  \Bigl( 1-\frac{q_{t}^{2}}{9} \Bigr)^{\frac{t}{2}} \notag \\ 
		&	\leq \exp\bigg[- \frac{q_{t}^{2} t}{18}\bigg]
			= \exp\bigg[- \frac{h^2}{144}\,\Big(\frac{t}{z_{t}^{2}}\Big)^{\frac13}\bigg], 
	\end{align}
	where the last inequality in the second line follows from an application of 
	Theorem~\ref{MarkovKernel} and Lemma~\ref{wic} to the weighted graph 
	$(\Tbf^{\mkern1mu q_{t}}_{q_{t}}, w^{(q_{t})}_{q_{t}})$. 
	This is justified because 
	$|\Tbf^{\mkern1mu q_{t}}_{q_{t}}|=\infty$, see Lemma~\ref{prime}\eqref{prime-islands}. 
	The inequality in the last line follows from $\ln(1+u)\leq u$, $|u|<1$, which is applicable
	by the definitions of $q_{t}$ and $z_{t}$ and due to $h<1$.
	
	Before we estimate the probability in the last line of \eqref{splitp}, we need to introduce one more 
	notion. Let 
	\begin{equation}
		A_{q,t} \coloneqq \bigcup_{q\text{-islands } C \subseteq A_q(\Tbf^{\mkern1mu q}) \text{ with }|C|> 
			\frac{1}{q_{t}} } C	\subseteq A_q(\Tbf^{\mkern1mu q})
	\end{equation}
	be the union of all $q$-islands $C$ in $\Tbf^{\mkern1mu q}$ with volume $|C|> \frac{1}{q_{t}}$. 
	We remark that by construction of $\Tbf^{\mkern1mu q}$, all such $q$-islands $C$ obey 
	$\dist(o,C) \le t$.
	Applying Lemma~\ref{Contained} with $q'=q_{t}$ to any of the remaining $q$-islands 
	$S \subseteq A_q(\Tbf^{\mkern1mu q})\setminus A_{q,t}$, 
	gives $S\subseteq \Tbf^{\mkern1mu q}\setminus A_{q_{t}}(\Tbf^{\mkern1mu q})$ so that 
	\eqref{aqt<aq} with $\Tbf$ replaced by $\Tbf^{\mkern1mu q}$ can be sharpened to 
	\begin{equation}
		\label{qat}
		A_{q_{t}}(\Tbf^{\mkern1mu q})	\subseteq A_{q,t}.
	\end{equation}
	Thus, the probability in the last line of \eqref{splitp} can be estimated as 
	\begin{equation}
		\label{second-prop}
		P^{\Tbf^{\mkern1mu q}}_o\big[X_t^{(q)}=o \wedge \exists\, s\in\{1,...,t\}:\; 
			X_s^{(q)}\in  \Tbf^{\mkern1mu q}_{ot} \cap A_{q_{t}}(\Tbf^{\mkern1mu q}) \big]		 
		\le P^{\Tbf^{\mkern1mu q}}_o\big[\exists\, s\in\{1,...,t\} :\; X_s^{(q)}\in A_{q,t}].
	\end{equation}
	In order to proceed further, we define the $q$-\emph{territory} of a
	$q$-island $C\subseteq A_{q,t}$ by 
	\begin{equation}\label{territory}
		D_C \coloneqq \Bigl\{x\in\Tbf^{\mkern1mu q} :\;\dist_q(x,C) \leq \frac{q}{4q_{t}z_t} \Bigr\}
	\end{equation}
	and assert two claims.

	\noindent
	\emph{Claim 1.} \quad $o\notin D_{C}$ for any $q$-island $C\subseteq A_{q,t}$.

	\noindent
	In view of Lemma~\ref{prime}\eqref{prime-islands}, Claim~1 will be obtained from the following argument: 
	We assume that $o\in D_C$ for some $q$-island 
	$C\subseteq A_{q}(\Tbf) $ with $\dist(o,C)\leq t$ and $|C| > \frac{1}{q_{t}}$ and strive for a 
	contradiction. 
  In fact, given these assumptions we conclude $\Tbf\in H_{q,t}^{0}$ because $C$ qualifies as $U_{q,t}$ in the definition \eqref{defHt}. 
  Indeed, since $z_{t} \ge 2$ we have $q|C| > \frac{q}{q_{t}} \ge 2^{\frac56} t^{\frac13}$.
  Furthermore, since $\Tbf \notin\mathcal{N}$ we have $|C| <\infty$ by Corollary~\ref{VolumeBound} and, 
  finally, since $\dist(o,C)\leq t$, there exists a bridge $B_{q,t}$ connecting the root $o$ with $C$ 
  and satisfying $\max_{v\in B_{q,t}}\dist(o,v)\leq t$. 
  Without loss of generality we assume that $B_{q,t}$ is the bridge with the shortest $q$-length 
  among all such bridges. Then, 
	\begin{equation}
		z_t\frac{|B_{q,t}\cup\{o\}\setminus A_q|}{|C|}
		= z_t\frac{\dist_q(o,C)}{|C|}
		\leq z_t\frac{qq_t}{4q_tz_t}
		<\frac{h}{3}
	\end{equation}
	where we used $o\in D_C$ and $|C| > q_{t}^{-1}$ for the first inequality. 
	It follows that even $\Tbf\in H_{t}$ according to the definition \eqref{territoryineq}. 
	This contradicts the initial assumption $\Tbf\in H_t^c$ and completes the proof of Claim~1.

	\noindent
	\emph{Claim 2.} \quad There exist at most $t$-many (distinct) $q$-islands $C_j\subseteq A_{q,t}$, 
	$j\in\{1,\ldots,t\}$, such that their territories 
	form a connected set $\bigcup_{j=1}^{t}D_{C_j}$ of vertices.

	\noindent
	We prove Claim~2 by contradiction and assume, again in view of Lemma~\ref{prime}\eqref{prime-islands}, 
	that there exists a union 
	$U_{q,t} \coloneqq\bigcup_{j=0}^{t}C_j$ of $(t+1)$-many $q$-islands in $\Tbf$ with 
	$\dist(o,C)\leq t$ and $|C| > \frac{1}{q_{t}}$. 
	Then, there is a bridge structure $B_{q,t}$ interconnecting $U_{q,t}\cup\{o\}$ with
	$q|U_{q,t}| > (t+1)  \frac{q}{q_{t}} > \frac{q}{q_{t}} \ge 2^{\frac56} t^{\frac13}$, 
	$\max_{v\in B_{q,t}}\dist(o,v)\leq t$, for which we used that $\Tbf$ is a tree, and 
	\begin{equation}
		\label{bridge-size}
		|(B_{q,t}\cup\{o\})\setminus A_q| \leq t+t \Big(\frac{q}{2 q_{t}z_t}-1\Big).
	\end{equation}
	The bound \eqref{bridge-size} holds because the bridge structure $B_{q,t}$ requires 
	at most $t$ vertices to connect the root $o$ with one of the islands, and in order to connect 
	this island with the remaining $t$ islands it requires $t$ further bridges 
	$B_j$, $j\in\{1,...,t\}$, between two islands. Each such bridge $B_j$ will be chosen to pass through a 
	common vertex $v_j$ of the territories of the two islands $C^{(j)}_1$ and $C^{(j)}_2$ 
	which it connects so that 
	\begin{equation}\label{bk}
		|B_{j}\setminus A_q| \leq\dist_q(v_j,C^{(j)}_1) + \dist_q(v_j,C^{(j)}_2)-1
		\le \frac{q}{2 q_{t} z_t}-1.
	\end{equation}
	Here we used \eqref{territory} for the second bound. This justifies \eqref{bridge-size}.
	Finally, we infer from \eqref{bridge-size} that 
	\begin{equation}
		\label{bridge-size2}
		z_{t} \,\frac{|(B_{q,t}\cup\{o\})\setminus A_q|}{|U_{q,t}|} 
		\leq t \frac{q}{2 q_{t} |U_{q,t}|}
		\leq \frac{tq}{2(t+1)} \le \frac{h}{3}.
	\end{equation}
	It follows from \eqref{territoryineq} that $\Tbf \in H_{t}$ which contradicts the initial assumption 
	$\Tbf\in H_t^c$. The proof of Claim~2 is complete. 

	Now, there are finitely many ``groups'' 
	\begin{equation}
		\mathcal{C}_{r} \coloneqq \bigcup_{j=1}^{J_{r}}C_{j}^{(r)},
	\end{equation} 
	of $q$-islands in $\Tbf^{\mkern1mu q}$, where $r\in \{1,\ldots, R\}$ for some $R\in\Nbb$, 
	each group---according to Claim~2---consisting of at most $t$-many $q$-islands 
	$C_{j}^{(r)} \subseteq A_{q,t}$ with $j\in\{1,\ldots,J_{r}\}$, $J_{r} \in \{1,\ldots,t\}$, and
	such that the territories of $q$-islands from different groups are disjoint. 
	Moreover, for every $r\in\{1,\ldots, R\}$, the union of territories 
	$D_{\mathcal{C}_{r}} \coloneqq \bigcup_{j=1}^{J_{r}} D_{C_{j}^{(r)}}$ 
	within each group is connected and possesses a unique vertex 
	$y_{r} \in D_{\mathcal{C}_{r}}$ which is closest to the root because $\Tbf^{\mkern1mu q}$ 
	is a tree. It follows from Claim~1 that 
	$y_{r}$ belongs to the inner vertex boundary of $D_{\mathcal{C}_{r}}$ and therefore 
	\begin{equation}
		\label{doc}
		\dist_q(y_{r},\mathcal{C}_{r})=\lfloor\frac{q}{4q_{t}z_t}\rfloor.
	\end{equation}
	Hence, the probability on the right-hand side of \eqref{second-prop} can be estimated as
	\begin{allowdisplaybreaks}
	\begin{align}
		\label{second-prob2}
		P^{\Tbf^{\mkern1mu q}}_o\big[\exists\, & s\in\{1,...,t\} :\; X_s^{(q)}\in A_{q,t}\big] \notag \\
		&\le P^{\Tbf^{\mkern1mu q}}_o\big[ \exists\, r \in\{1,\ldots,R\} \;
				\exists\, s_{0}\in\{1,...,t\} \; \exists\, s \in\{s_{0}+1,\ldots,t\} \notag \\
		&\hspace{6cm}	:\; X_{s_{0}}^{(q)} = y_{r} \text{ and } X_{s}^{(q)} \in\mathcal{C}_{r} \big] \notag \\
		&\le \sum_{s_{0}=1}^{t} \sum_{r=1}^{R} P^{\Tbf^{\mkern1mu q}}_o\big[ 
				\exists\, s \in\Nbb\setminus\{1,\ldots,s_{0}\} :\; X_{s_{0}}^{(q)} = y_{r} \text{ and } 
				X_{s}^{(q)} \in\mathcal{C}_{r} \big] \notag \\
		&= \sum_{s_{0}=1}^{t} \sum_{r=1}^{R} E^{\Tbf^{\mkern1mu q}}_o\Big[ 1_{\{y_{r}\}}(X_{s_{0}}^{(q)})
				P^{\Tbf^{\mkern1mu q}}_{y_{r}}[	\tau_{\mathcal{C}_{r}} < \infty ] \Big],  
	\end{align}
	\end{allowdisplaybreaks}%
	where the equality rests on the Markov property and $E^{\Tbf^{\mkern1mu q}}_o$ is the 
	probabilistic expectation corresponding to $P^{\Tbf^{\mkern1mu q}}_o$.
	Abbreviating $\mathcal{S}_{q} \coloneqq \sup_{r\in\{1,\ldots,R\}} P^{\Tbf^{\mkern1mu q}}_{y_{r}}
		[\tau_{\mathcal{C}_{r}} < \infty]$ and noting that the $y_{r}$'s are pairwise distinct, 
	we conclude from \eqref{second-prob2} 
	\begin{equation}
		\label{second-prob3}
		P^{\Tbf^{\mkern1mu q}}_o\big[\exists\,  s\in\{1,...,t\} :\; X_s^{(q)}\in A_{q,t}\big] 
		\le \mathcal{S}_{q} \sum_{s_{0}=1}^{t} P^{\Tbf^{\mkern1mu q}}_o \Big[ X_{s_{0}}^{(q)} \in 
				\bigcup_{r=1}^{R}\{y_{r}\} \Big]	\le t	\mathcal{S}_{q}.
	\end{equation}
	The supremum $\mathcal{S}_{q}$ can be estimated with Lemma~\ref{TimeToEscape}, choosing $\Tbf$
	there as the regularised tree $\Tbf^{\mkern1mu q}$. This is possible because of 
	Lemma~\ref{prime}\eqref{prime-bdd} and~\eqref{prime-islands} and gives
	\begin{equation}
		\label{second-prob4}
		\mathcal{S}_{q} \le \frac{18}{q^2} \,\Bigl(1-\frac{q^2}{9}\Bigr)^{\frac{q}{8q_{t}z_{t}} - \frac32}\;
			\,(tz_{t})^{\frac12}.
	\end{equation}
	Combining \eqref{second-prob3} and \eqref{second-prob4}, we infer that there exists 
	$t_{0}\in\Nbb$, which depends only on $h$ and on the sequence $(z_t)_{t\in\Nbb}$, such that 
	\begin{equation}
		\label{second-prob5}
		P^{\Tbf^{\mkern1mu q}}_o\big[\exists\, s\in\{1,...,t\} :\; X_s^{(q)}\in A_{q,t}\big] 
		\le \exp\bigg[- \frac{h^2}{86}\,\Big(\frac{t}{z_{t}^{2}}\Big)^{\frac13}\bigg]
	\end{equation}
	holds, provided $t\geq t_{0}$. Thus, the theorem follows from \eqref{splitp}, \eqref{estp'},  
	\eqref{second-prop} and \eqref{second-prob5}.
\end{proof}

\noindent
Finally, we will prove Theorems~\ref{Decay} and \ref{thm}.	

\begin{proof}[Proof of Theorem~\ref{Decay}]
	The offspring distribution has bounded support by hypothesis of the theorem. 
	Thus, there is $z\in\Nbb$ such that 
	$p_j=0$ for all $j\geq z$, and we choose $z_t \coloneqq  \max\{3, z\}$ for every $t\in\Nbb$. 
	Moreover, $\Tbb \setminus M_{t,z_{t}}$ is a $G$-null set for every $t\in\Nbb$ so that
	Theorem~\ref{exactdecay} and Lemma~\ref{badlargeislands} imply
	\begin{align}
		GP[X_t=o] 
		&\le \int_{M_{t,z_{t}}\cap H_{t}^{c}}\d G(\Tbf) P_{o}^{\Tbf}[X_t=o] + G[M_{t,z_{t}} \cap H_{t}]  
			\notag \\
		&\le \exp\bigg[- \frac{h^2}{144 z^{\frac23}} \, t^{\frac13} \bigg]
			+ \exp \big[ -c_{5} t^{\frac13}\big] 
	\end{align}
	for all $t\geq t_0$, where $t_{0}$ depends only on $h$ and $z$, and the constant $c_{5}>0$ is 
	defined in Lemma~\ref{lem:hdef}.
	By the same argument as in Remark~\ref{PiauRem}\eqref{noConstant} we obtain the claim of the theorem. 
\end{proof}

\begin{proof}[Proof of Theorem~\ref{thm}]
	We consider an offspring distribution with super-Gaussian decay as in \eqref{eq:k-decay} for some constants
	$c_{1}, c_{2} >0$ and $k > 2$. Let $z_{t} \coloneqq 3+ c_{3}t^{\frac1k}$ for every $t\in\Nbb$ 
	with $c_{3}> 0$ as required by Lemma~\ref{SetsOfTrees}. In particular, we then have 
	$z_t\in\;\scriptstyle\mathcal{O}$$(t^{\frac12})$ as $t\to\infty$ and $F_t^c\subseteq M_{t,z_{t}}$, where 
	the former is defined in \eqref{Ftdefeq} and the latter in \eqref{Mt-def}. We conclude
	\begin{align}
		GP[X_t=o] 
		&\le \int_{F_{t}^{c}\cap H_{t}^{c}}\d G(\Tbf) P_{o}^{\Tbf}[X_t=o] 
				+ G[F_{t}] + G[M_{t,z_{t}} \cap H_{t}] \notag \\
		&\le \exp\bigg[- \frac{h^2}{144 (3+c_{3})^{\frac23}} \, t^{\frac13 -\frac{2}{3k}} \bigg]
			+ C \exp \big[ -c_{4} t\big] + \exp \big[ -c_{5} t^{\frac13}\big], 
	\end{align}
	where the second inequality follows from Theorem~\ref{exactdecay},
	Lemma~\ref{SetsOfTrees} and Lemma~\ref{badlargeislands} and holds for all $t\ge t_{0}$ which 
	arises from Theorem~\ref{exactdecay}. By the same argument as in Remark~\ref{PiauRem}\eqref{noConstant} we infer 
	the claim of the theorem.
\end{proof}

%%%%%%%%%%%%%%%%%%%%%%%%%%%%%%%%%%%%%%%%%%%%%%
%% Single Appendix                         %%
%%%%%%%%%%%%%%%%%%%%%%%%%%%%%%%%%%%%%%%%%%%%%%
\begin{appendix}
\section{On \texorpdfstring{\boldmath$q$}{q}-islands and \texorpdfstring{\boldmath$q$}{q}-oceans}

For completeness and convenience of the reader, we recall in this appendix some basic properties of 
$q$-islands and $q$-oceans, which are taken from \cite[Sect.~3]{virag2000bnddegree}. 
Some slight modifications occur because our notion of volume refers to the cardinality of a set, whereas 
Vir\'ag considers weighted volumes. Throughout, $\Tbf$ can be any fixed infinite and connected graph with locally bounded 
vertex degrees. It need not be a realisation of a Galton--Watson tree here.   

\begin{lemma}\label{DeltaBound}
	Let $q>0$, let $A\subseteq\Tbf$ be a finite vertex subset and let $S \subseteq\Tbf$ be a $q$-isolated 
	core. Then, we have $\Delta_q A \le \Delta_q(A\cup S)$ with equality if and only if $S\subseteq A$. 
\end{lemma}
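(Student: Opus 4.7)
The plan is to compute the difference $\Delta_q(A\cup S)-\Delta_q A$ by carefully bookkeeping what happens to vertex counts and edge boundaries when $S\setminus A$ is adjoined to $A$, and then express the result in a way that lets me invoke the defining property of a $q$-isolated core applied to the proper subset $S\cap A\subsetneq S$.

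For the volume part this is trivial: $|A\cup S|-|A|=|S\setminus A|$. For the boundary part, I would split the edges incident to $S\setminus A$ into three classes according to whether the other endpoint lies in $A$, in $S\setminus A$, or in $(A\cup S)^c$. The edges from $A$ to $S\setminus A$ were in $\partial A$ but disappear from $\partial(A\cup S)$, while the edges from $S\setminus A$ to $(A\cup S)^c$ appear newly in $\partial(A\cup S)$. So
\begin{equation}
\Delta_q(A\cup S)-\Delta_q A \;=\; q|S\setminus A|+|E(A,S\setminus A)|-|E(S\setminus A,(A\cup S)^c)|,
\end{equation}
where $E(X,Y)$ denotes the set of edges with one endpoint in $X$ and one in $Y$.

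Next I would perform the analogous computation comparing $S$ with its subset $S\cap A$: the edges from $S\cap A$ to $S\setminus A$ leave $\partial(S\cap A)$, while the edges from $S\setminus A$ to $S^c$ enter $\partial S$. Writing $S^c=(A\setminus S)\,\dot\cup\,(A\cup S)^c$ and $E(A,S\setminus A)=E(S\cap A,S\setminus A)\,\dot\cup\,E(A\setminus S,S\setminus A)$, a little algebra yields
\begin{equation}
\Delta_q(A\cup S)-\Delta_q A \;=\; \Delta_q S-\Delta_q(S\cap A)+2\,|E(A\setminus S,\,S\setminus A)|.
\end{equation}

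From this identity the lemma drops out: if $S\subseteq A$, then $S\cap A=S$ and $S\setminus A=\emptyset$, whence both sides vanish; if $S\not\subseteq A$, then $S\cap A\subsetneq S$, so the $q$-isolated core property of $S$ forces $\Delta_q S-\Delta_q(S\cap A)>0$, while the edge term is non-negative, giving the strict inequality. The only delicate point is the edge bookkeeping in the two difference identities, which I would carry out by explicitly partitioning the edge sets as indicated above; there is no essential conceptual obstacle beyond this combinatorial accounting.
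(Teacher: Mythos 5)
Your proof is correct and follows essentially the same strategy as the paper: both derive the identity $\Delta_q(A\cup S)-\Delta_q A=\Delta_q S-\Delta_q(S\cap A)+2\lvert E(A\setminus S,S\setminus A)\rvert$ and then invoke the $q$-isolated-core property of $S$ applied to the proper subset $S\cap A$. The only cosmetic difference is that the paper first isolates the reusable disjoint-split formula $\Delta_q(B\cup C)=\Delta_q B+\Delta_q C+2\lvert\partial B\cap\partial C\rvert$ and applies it twice (with $B=A\setminus S$, $C=S$, resp.\ $C=A\cap S$), while you carry out the same edge bookkeeping directly by partitioning the boundary edges.
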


\begin{proof}
	If $S\subseteq A$, then the claim trivially holds with equality. 
	So let us now suppose that $S$ is not a subset of $A$.
	
	We note that if $B$ and $C$ are finite disjoint vertex subsets of $\Tbf$, then
	\begin{equation}\label{DeltaDisjSplit}
	\Delta_q(B\cup C)=\Delta_q B+\Delta_q C+ 2 |\partial B\cap\partial C|. 
	\end{equation}
	The factor $2$ in the above expression appears since common boundary edges of $B$ and $C$ are not 
	boundary edges of their union, i.e., $2|\partial B\cap\partial C| = |\partial B| + |\partial C| -
	|\partial B\cup\partial C|$.

	We conclude from \eqref{DeltaDisjSplit} that $\Delta_q(A\cup S)=\Delta_q (A\setminus S)+\Delta_q S+ 2\vv{\partial (A\setminus S)\cap\partial S}$. Since we assumed that $S$ is a $q$-isolated core, we have $\Delta_q S> \Delta_q (A\cap S)$ by definition because $A \cap S \subsetneq S$ due to $S$ not being a subset of $A$. Also, $\partial (A\setminus S)\cap\partial S\supseteq \partial (A\setminus S)\cap\partial (A\cap S)$, since every edge in the intersection of sets has to connect $S$ with its complement and is thus in $\partial S$. Therefore, another application of \eqref{DeltaDisjSplit} yields 
	\begin{equation}
	\Delta_q(A\cup S)>\Delta_q (A\setminus S)+\Delta_q (A\cap S)+2\vv{\partial (A\setminus S)\cap\partial (A\cap S)}=\Delta_q A. 
	\end{equation} 
\end{proof}

\begin{corollary}\label{finite}
	Let $q>0$. Then, the union of finitely many $q$-isolated cores of $\Tbf$ is a $q$-isolated 
	core of $\Tbf$.
\end{corollary}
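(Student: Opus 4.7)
The plan is to deduce Corollary~\ref{finite} directly from Lemma~\ref{DeltaBound} by an elementary induction on the number of $q$-isolated cores, the base case of which reduces to a single iterated application of the lemma.

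First I would reduce to the case of two $q$-isolated cores. If $S_1,\ldots,S_n$ are $q$-isolated cores of $\Tbf$, then each $S_i$ is finite, so $S\coloneqq S_1\cup\cdots\cup S_n$ is a finite vertex subset of $\Tbf$, as required. To verify the defining inequality $\Delta_q S>\Delta_q A$ for every $A\subsetneq S$, it suffices to prove the statement for $n=2$: once this is known, an inductive argument combined with the fact that a finite union of finite sets is finite yields the full claim. So assume $n=2$ and let $A\subsetneq S_1\cup S_2$.

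The key observation is that since $A$ is a proper subset of $S_1\cup S_2$, at least one of $S_1$, $S_2$ is not contained in $A$. Without loss of generality, $S_1\not\subseteq A$. I would then chain two applications of Lemma~\ref{DeltaBound}: the first, with the $q$-isolated core $S_1$ and the finite vertex subset $A$, yields
\begin{equation}
\Delta_q A < \Delta_q(A\cup S_1),
\end{equation}
where strictness is guaranteed by $S_1\not\subseteq A$; the second, with the $q$-isolated core $S_2$ and the finite vertex subset $A\cup S_1$, gives
\begin{equation}
\Delta_q(A\cup S_1)\le \Delta_q(A\cup S_1\cup S_2)=\Delta_q S.
\end{equation}
Concatenating the two estimates produces $\Delta_q A<\Delta_q S$, which is exactly the condition for $S$ to be a $q$-isolated core.

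The induction step from $n-1$ to $n$ then follows by grouping: write $S=(S_1\cup\cdots\cup S_{n-1})\cup S_n$, apply the inductive hypothesis to conclude that $S_1\cup\cdots\cup S_{n-1}$ is a $q$-isolated core, and then invoke the case $n=2$. There is no genuine obstacle here: the only subtlety is ensuring that the strict inequality in Lemma~\ref{DeltaBound} is available for at least one of the two cores when $A\subsetneq S$, which is precisely what the pigeonhole observation $S_i\not\subseteq A$ for some $i$ provides.
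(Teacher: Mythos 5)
Your proof is correct and uses the same two-step chaining of Lemma~\ref{DeltaBound} that the paper employs, with the inductive reduction to $n=2$ spelled out. In fact your pigeonhole step is slightly more careful than the paper's phrasing: what Lemma~\ref{DeltaBound} requires for strictness is $S_1\not\subseteq A$, which is exactly what you deduce from $A\subsetneq S_1\cup S_2$, whereas the paper's intermediate assertion that ``$A$ must be a proper subset of at least one of the sets $S$ and $S'$'' is not literally true in general (take $A$ straddling both cores without containing either) --- though the paper's argument still goes through because only the non-containment $S\not\subseteq A$ is actually used.
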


\begin{proof}
	It suffices to prove the claim for two $q$-isolated cores $S$ and $S'$ of $\Tbf$. Let $A\subsetneq S\cup S'$ be arbitrary. 
	Then $A$ must be a proper subset of least one of the sets $S$ and $S'$. 
	W.l.o.g.\ suppose that $A$ is a proper subset of $S$. Applying Lemma~\ref{DeltaBound} with $A$ and $S$, followed by another application with $A\cup S$ and $A \cup S \cup S'$, yields
	$\Delta_q A < \Delta_q (A\cup S) \le \Delta_q (A\cup S\cup S')=\Delta_q (S\cup S')$. The last equality holds because of $A\subset S\cup S'$, and the claim follows.
\end{proof}

\noindent 
The following lemma relates to a statement in \cite[Sect.~3]{virag2000bnddegree} 
which is given there without proof.

\begin{lemma}\label{det-VolumeBound}
	Let $q \in \,]0, \mathbf{i}(\mathbf{T})[\,$. Then, every 
	$q$-island of $\Tbf$ has only finitely many vertices and thus is itself a $q$-isolated core of $\Tbf$. 
\end{lemma}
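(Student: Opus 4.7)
The plan is a proof by contradiction. Assume some $q$-island $C\subseteq A_q$ is infinite; I will produce a sequence of finite connected subsets $K_n$ of $\Tbf$ containing the root $o$ with $\vv{K_n}\to\infty$ and $\ev{\partial K_n}/\vv{K_n}$ bounded above by a quantity converging to $q$, contradicting $q<\mathbf{i}(\Tbf)$.

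The main step is to exhaust $C$ by an increasing sequence of \emph{connected} finite $q$-isolated cores $T_n\subseteq C$. Every vertex $v\in C\subseteq A_q$ sits in some $q$-isolated core, and by Remark~\ref{core-rem}\eqref{core-connected} I may pass to its connected component containing $v$ to obtain a connected $q$-isolated core $S_v\subseteq C$. Starting from $T_0:=S_{v_0}$ for an arbitrary $v_0\in C$, I iterate as follows: given a connected $q$-isolated core $T_n\subsetneq C$, the fact that $C$ is connected in $\Tbf$ (it is a connected component of $A_q$) produces a vertex $u\in C\setminus T_n$ adjacent in $\Tbf$ to some vertex of $T_n$; I then set $T_{n+1}:=T_n\cup S_u$. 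This set is strictly larger than $T_n$, still finite, connected (since $T_n$ and $S_u$ are each connected and share adjacent vertices), and again a $q$-isolated core by Corollary~\ref{finite}. Since the $T_n$ form a strictly increasing sequence of finite subsets of the infinite set $C$, their cardinalities must diverge.

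With the sequence $(T_n)$ in hand, fix a finite path $P\subset\Tbf$ from $o$ to any vertex of $T_0$, which exists because $\Tbf$ is connected, and set $K_n:=T_n\cup P$. Each $K_n$ is finite, connected, and contains $o$, and $\partial K_n\subseteq\partial T_n\cup\partial P$. Since $T_n$ is $q$-isolated (Remark~\ref{core-rem}\eqref{core-isolated}), this yields
\[
\frac{\ev{\partial K_n}}{\vv{K_n}}\leq\frac{\ev{\partial T_n}+\ev{\partial P}}{\vv{T_n}}<q+\frac{\ev{\partial P}}{\vv{T_n}},
\]
where $\ev{\partial P}$ is a fixed finite constant thanks to the assumption of locally bounded vertex degrees. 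The right-hand side tends to $q$ as $n\to\infty$, so $\ev{\partial K_n}/\vv{K_n}$ is eventually strictly less than $\mathbf{i}(\Tbf)$ while $\vv{K_n}$ exceeds any prescribed threshold, contradicting the definition of $\mathbf{i}(\Tbf)$.

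Once finiteness of $C$ is established, $C=\bigcup_{v\in C}S_v$ is a finite union of connected $q$-isolated cores, and Corollary~\ref{finite} then implies that $C$ is itself a $q$-isolated core, which settles the second part of the claim. The main technical point is the construction of the exhausting sequence $(T_n)$, which requires combining $q$-isolated cores via Corollary~\ref{finite} and exploiting the connectedness of $C$ in $\Tbf$; the subsequent boundary estimate and comparison with $\mathbf{i}(\Tbf)$ are then essentially immediate.
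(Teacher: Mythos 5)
Your proof is correct and takes essentially the same approach as the paper's: both exhaust the hypothetical infinite $q$-island by an increasing sequence of finite connected $q$-isolated cores (built via Corollary~\ref{finite} and Remark~\ref{core-rem}), attach a path to the root to produce anchored test sets $K_n$, and derive a contradiction with $q < \mathbf{i}(\Tbf)$. Your version is marginally cleaner in that you construct the increasing cores directly by adjoining a new connected core at each step rather than renumbering a given decomposition, and you fix a single path to $T_0$ rather than a nested family of paths, but the underlying argument is the same.
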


\begin{proof}
	Suppose that there exists a $q$-island $S\subseteq\Tbf$ with 
	$|S| = \infty$. Thus, $S$ must be formed by a countably infinite union $S=\bigcup_{j\in\Nbb}S_{j}$
	of $q$-isolated cores $S_{j}$ of $\Tbf$. Then, $A_{n}\coloneqq\bigcup_{j=1}^{n}S_{j}$ is a $q$-isolated core 
	for every $n\in\Nbb$ by Corollary~\ref{finite}. Hence, we have 
	\begin{equation}
		\label{An-iso}
		\frac{|\partial A_{n}|}{|A_{n}|} < q 
	\end{equation}
	for every $n\in\Nbb$ by Remark~\ref{core-rem}\eqref{core-isolated}.	
	W.l.o.g.\ it can be assumed that each 
	$S_{j}$ is not empty and, due to Remark~\ref{core-rem}\eqref{core-connected}, connected. 
	Since $S$ is connected 
	by hypothesis a suitable renumbering of the $S_{j}$'s will guarantee that $A_{n}$ is connected 
	for every $n\in\Nbb$. Furthermore, we can assume w.l.o.g.\ that 
	$S_{j+1}\setminus A_{j} \neq\emptyset$ for every $j\in\Nbb$. Thus, $|A_{n}| \ge n$ for every
	$n\in\Nbb$. Finally, we connect $A_{n}$ with the root $o$ for every $n\in\Nbb$ by attaching a 
	suitable linear path $P_{n} \subset\Tbf$ to it. If $o\in A_{n}$ already, we set $P_{n}= \emptyset$. 
	Since $A_{n} \subseteq A_{n+1}$, we have $P_{n} \supseteq P_{n+1}$, and because of the linear 
	structure of 
	$P_{n}$, this implies $|\partial P_{n}| \ge |\partial P_{n+1}|$ for every $n\in\Nbb$. 
	Defining $K_{n} \coloneqq P_{n} \cup A_{n}$ for $n\in\Nbb$, we conclude that $o \in K_{n} \subseteq\Tbf$ is 
	connected, $|K_{n}| \ge |A_{n}| \ge n$ and $|\partial K_{n}| \le |\partial A_{n}| + |\partial P_{n}|
		\le |\partial A_{n}| + |\partial P_{1}|$ for every $n\in\Nbb$.
	We thus infer a contradiction in that 
	\begin{equation}
		\label{inf-island-cons}
		\mathbf{i}(\Tbf) \le \lim_{n\to\infty} \frac{|\partial K_{n}|}{|K_{n}|} \le q,
	\end{equation}
	where we used \eqref{An-iso} for the last estimate. 
	Hence, every $q$-island of $\Tbf$ is finite, therefore a finite union of $q$-isolated cores and 
	therefore itself a $q$-isolated core by Corollary~\ref{finite}.
\end{proof}

\noindent 
Next, we argue that decreasing $q$ raises the sea level of the oceans.

\begin{lemma}\label{Contain}
	Let $0<q'<q$. Then, $A_{q'}\subseteq A_q$.
\end{lemma}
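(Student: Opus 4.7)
The plan is to show the stronger statement that every $q'$-isolated core is itself a $q$-isolated core; the inclusion $A_{q'}\subseteq A_q$ then follows immediately by taking the union over all $q'$-isolated cores of $\Tbf$.

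Fix a $q'$-isolated core $S\subseteq\Tbf$ and let $A\subsetneq S$ be an arbitrary proper subset. The key algebraic identity is
\begin{equation}
    \Delta_q S - \Delta_q A = (q-q')(|S|-|A|) + \bigl(\Delta_{q'}S - \Delta_{q'}A\bigr),
\end{equation}
which one reads off directly from $\Delta_{q}T = q|T|-|\partial T|$. Since $A\subsetneq S$ and $S$ is finite (as a $q'$-isolated core), we have $|S|-|A|\geq 1$, so the first summand is strictly positive because $q-q'>0$. The second summand is strictly positive by the defining property of $S$ being a $q'$-isolated core. Hence $\Delta_q S > \Delta_q A$ for every proper subset $A$ of $S$, so $S$ is a $q$-isolated core and therefore $S\subseteq A_q$.

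Taking the union over all $q'$-isolated cores $S$ of $\Tbf$ gives $A_{q'}\subseteq A_q$. There is no real obstacle; the statement is a one-line consequence of the monotonicity of $q\mapsto\Delta_q T$ in $q$ (for fixed $T$) combined with the fact that the extremal condition in the definition of a $q$-isolated core only needs to be tested against \emph{proper} subsets, which automatically contribute an additional strictly positive term $(q-q')(|S|-|A|)$.
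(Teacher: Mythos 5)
Your proof is correct and takes essentially the same approach as the paper: both use the identity $\Delta_q T = \Delta_{q'} T + (q-q')|T|$ to show that the $q'$-isolated core property is preserved when passing to $q$, and conclude by taking the union. The only cosmetic difference is that you observe both summands in the decomposition are strictly positive, whereas the paper only invokes the second one; the conclusion is the same.
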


\begin{proof}
	We have $\Delta_q S=(q-q')\vv{S}+\Delta_{q'} S\geq\Delta_{q'} S$ for any finite vertex subset $S\subseteq\Tbf$. So any $q'$-isolated set is also $q$-isolated. Moreover, if $A\subsetneq S$ with $\Delta_{q'} A<\Delta_{q'} S$, then also $\Delta_{q} A<\Delta_{q} S$. Therefore, $q'$-isolated cores are $q$-isolated cores as well, giving $A_{q'}\subseteq A_q$.
\end{proof}

\noindent
In the next lemma we quantify the preceding statement in that too small $q$-islands sink into the oceans 
when lowering $q$.

\begin{lemma}\label{Contained}
	Let $0<q'<q$ and $S\subseteq \Tbf$ be a union of $q$-islands with $\vv{S} \leq\frac{1}{q'}$. Then, $S\subseteq \Tbf\setminus A_{q'}$.
\end{lemma}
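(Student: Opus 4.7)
The plan is to argue by contradiction. Suppose there is some vertex $x\in S\cap A_{q'}$, so that $x$ lies in a $q'$-isolated core of $\Tbf$. Passing to the connected component containing $x$---which is itself a $q'$-isolated core by the argument underlying Remark~\ref{core-rem}\eqref{core-connected}---I may assume without loss of generality that this $q'$-isolated core $C'$ is connected and nonempty.

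Next I would localise $C'$ inside a single $q$-island contained in $S$. Applying Lemma~\ref{Contain}, which is available because $q'<q$, gives $C'\subseteq A_{q'}\subseteq A_q$. Since the $q$-islands are by definition the connected components of $A_q$, the connected set $C'$ is contained in a single $q$-island $I$. Combined with $x\in C'\cap S$ and the fact that $S$ is a union of $q$-islands, this forces $I\subseteq S$, hence $C'\subseteq S$. In particular, $\vv{C'}\le\vv{S}\le 1/q'$.

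The final step is to exploit the $q'$-isolation of $C'$: taking $A=\emptyset$ in the definition of a $q'$-isolated core yields $\Delta_{q'}C'>0$, i.e.\ $\ev{\partial C'}<q'\vv{C'}\le 1$. As $\ev{\partial C'}$ is a non-negative integer, this forces $\ev{\partial C'}=0$. But $\Tbf$ is connected and infinite, so the only vertex set with empty edge boundary is $\emptyset$ or $\Tbf$ itself, neither of which can equal the nonempty finite set $C'$. This is the desired contradiction.

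I do not expect any substantial obstacle. The one step that requires a bit of care is the reduction to a connected $C'$; without it, $C'$ could a priori straddle several connected components of $A_q$ and the inclusion inside a single $q$-island would fail. Once that reduction is in place, the sharp interplay between the hypothesis $\vv{S}\le 1/q'$ and the $q'$-isolation inequality makes the contradiction immediate.
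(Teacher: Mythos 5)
Your proposal is correct and follows essentially the same route as the paper: both use Lemma~\ref{Contain} to embed $A_{q'}$ into $A_{q}$, both localise a small $q'$-isolated core inside $S$ via the hypothesis that $S$ is a union of $q$-islands, and both derive the contradiction by combining $\Delta_{q'}>0$ with the volume bound $\vv{S}\le 1/q'$ and $\ev{\partial\,\pmb\cdot\,}\ge 1$ on an infinite connected graph. The only cosmetic difference is that you work with a single connected $q'$-isolated core $C'$ containing an offending vertex, whereas the paper takes $S'=S\cap A_{q'}$ and invokes Corollary~\ref{finite} to see it is a $q'$-isolated core; both reductions are valid.
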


\begin{proof}
	We argue by contradiction and assume that there exists $\emptyset\neq S'\subseteq S$ with $S'\subseteq A_{q'}$. Since $S$ is
	a finite union of $q$-islands and $A_{q'} \subseteq A_{q}$ by Lemma~\ref{Contain}, it follows that $S'$ is a finite union of 
	$q'$-islands and, thus, a $q'$-isolated core, i.e.\ $\Delta_{q'}S' >0$. On the other hand, 
	\begin{equation}
		\Delta_{q'}S' \le q' |S| - |\partial S'| \le 1 - |\partial S'| \le 0,
	\end{equation}
	where we used the volume assumption for $S$ in the second inequality and $|\partial S'| \ge 1$ in the last inequality.
	This holds because $\Tbf$ is infinite and connected.
\end{proof}

\end{appendix}

%%%%%%%%%%%%%%%%%%%%%%%%%%%%%%%%%%%%%%%%%%%%%%%%%%%%%%%%%%%%%%%%%%%
%%                                                               %%
%% Use the two commands below for producing your bibliography    %%
%% with bibtex, then comment again the commands and include the  %%
%% content of the .bbl file in this file below the commands.     %%
%%                                                               %%
%%%%%%%%%%%%%%%%%%%%%%%%%%%%%%%%%%%%%%%%%%%%%%%%%%%%%%%%%%%%%%%%%%%

%\bibliographystyle{amsplain}
%\bibliography{biblio}

% add below the content of your .bbl file produced by bibtex.

%%%%%%%%%%%%%%%%%%%%%%%%%%%%%%%%%%%%%%%%%%%%%%%%%%%%%%%%%%%%%%%%%%%
%%                                                               %%
%% You may add acknowledgments (optional).                       %%
%%                                                               %%
%%%%%%%%%%%%%%%%%%%%%%%%%%%%%%%%%%%%%%%%%%%%%%%%%%%%%%%%%%%%%%%%%%%
%\begin{acks}
%\end{acks}

%%%%%%%%%%%%%%%%%%%%%%%%%%%%%%%%%%%%%%%%%%%%%%%%%%%%%%%%%%%%%%%%%%%
%%                                                               %%
%% You have reached the end of your document.                    %%
%%                                                               %%
%%%%%%%%%%%%%%%%%%%%%%%%%%%%%%%%%%%%%%%%%%%%%%%%%%%%%%%%%%%%%%%%%%%

\end{document}